\numberwithin{equation}{section} 
\theoremstyle{plain}
\newtheorem{theorem}{Theorem}
\newtheorem*{width}{Theorem \ref{width.theorem}}
\newtheorem*{generalbundles}{Theorem \ref{T:generalbundlewidth}}
\newtheorem{lemma}[theorem]{Lemma}
\newtheorem{corollary}[theorem]{Corollary}
\newtheorem{proposition}[theorem]{Proposition}
\newtheorem{claim}[theorem]{Claim}
\newtheorem*{claimplain}{Claim}
\newtheorem{criterion}[theorem]{Criterion}
\theoremstyle{definition}
\newtheorem*{remarks}{Remarks}
\newcommand{\HH}{\mathbb{H}}
\newcommand{\RR}{\mathbb{R}}
\newcommand{\ZZ}{\mathbb{Z}}
\newcommand{\calA}{\mathcal{A}}
\newcommand{\calC}{\mathcal{C}}
\newcommand{\calD}{\mathcal{D}}
\newcommand{\calE}{\mathcal{E}}
\newcommand{\calF}{\mathcal{F}}
\newcommand{\calG}{\mathcal{G}}
\newcommand{\calH}{\mathcal{H}}
\newcommand{\calK}{\mathcal{K}}
\newcommand{\calN}{\mathcal{N}}
\newcommand{\calP}{\mathcal{P}}
\newcommand{\calQ}{\mathcal{Q}}
\newcommand{\calR}{\mathcal{R}}
\newcommand{\calS}{\mathcal{S}}
\newcommand{\calU}{\mathcal{U}}
\newcommand{\calV}{\mathcal{V}}
\newcommand{\calX}{\mathcal{X}}
\newcommand{\MitraQIK}{{K_{1}}}
\newcommand{\MitraQIC}{{C_{1}}}
\newcommand{\MitraProjK}{K_{0}}
\newcommand{\MosherQISectionK}{{k_{0}}}
\newcommand{\MosherQISectionC}{{c_{0}}}
\newcommand{\OurQISectionK}{{K_{2}}}
\newcommand{\OurQISectionC}{{C_{2}}}
\newcommand{\GapOverShortest}{{D_1}}
\newcommand{\GapOverRelShortest}{{D_0}}
\newcommand{\FewShortSectionsBound}{{B_1}}
\newcommand{\UniformPropernessConstantA}{{E_0}}
\newcommand{\HypFlatQIK}{{K_3}}
\newcommand{\HypFlatQIC}{{C_3}}
\newcommand{\HypFlatQIKgen}{{K_6}}
\newcommand{\HypFlatQICgen}{{C_6}}
\newcommand{\ourepsilon}{{\epsilon_{0}}}
\newcommand{\ourtheta}{{\theta_{0}}}
\newcommand{\OurT}{{T_r}}
\newcommand{\NaughtT}{{T_0}}
\newcommand{\FirstT}{{T_1}}
\newcommand{\SecondT}{{T_2}}
\newcommand{\ThirdT}{{T_3}}
\newcommand{\SOLfiberQCXconstantA}{{A_0}}
\newcommand{\SOLfiberQIconstantK}{{K_4}}
\newcommand{\SOLfiberQIconstantC}{{C_4}}
\newcommand{\BoundOnSectionsInNeighborhood}{{B_{2}}}
\newcommand{\GapOverBalance}{{D_3}}
\newcommand{\GapOverRelBalance}{{D_2}}
\newcommand{\FiberwiseQIK}{{K_5}}
\newcommand{\FiberwiseQIC}{{C_5}}
\newcommand{\arccosh}{\mathrm{arccosh}}
\newcommand{\closest}[1]{\mathfrak{p}_{#1}}
\newcommand{\diam}{\mathrm{diam}}
\newcommand{\Mod}{\mathrm{Mod}}
\newcommand{\Homeo}{\mathrm{Homeo}}
\newcommand{\Sol}{\textsc{Sol}}
\newcommand{\Graph}[1]{X_{#1}}
\newcommand{\Teich}{\mathcal{T}}
\newcommand{\cusp}{\mathbf{P}}
\newcommand{\WHull}{\mathrm{WH}}
\newcommand{\isosection}{{\Xi}}
\newcommand{\co}{\colon\thinspace}
\newcommand{\scriptinfinity}{{\rotatebox{90}{\scriptsize $8$}}}
\begin{document}

% TITLE
\title{\textbf{A geometric criterion to be pseudo-Anosov}}
\author{Richard P. Kent IV and Christopher J. Leininger\thanks{The first author was supported in part by an NSF MSPRF and NSF grant DMS-1104871, the second author by NSF grants DMS-0603881 and DMS-0905748.
Both authors were partially supported by the GEAR network.
}}

\date{April 5, 2014}

\maketitle

\begin{abstract} 
If $S$ is a hyperbolic surface and $\mathring{S}$ the surface obtained  from $S$ by removing a point, the mapping class groups $\Mod(S)$ and  $\Mod(\mathring{S})$ fit into a short exact sequence
	\[
		1 \to \pi_1(S) \to \Mod(\mathring{S}) \to \Mod(S) \to 1.
	\]
We give a new criterion for mapping classes in the kernel to be pseudo-Anosov using the geometry of hyperbolic $3$--manifolds.  
Namely, we show that if $M$ is an $\epsilon$--thick hyperbolic manifold homeomorphic to $S \times \RR$, then an element of $\pi_1(M) \cong \pi_1(S)$ represents a pseudo-Anosov element of $\Mod(\mathring{S})$ if its geodesic representative is ``wide."   
We establish similar criteria where $M$ is replaced with a coarsely hyperbolic surface bundle  coming from a $\delta$--hyperbolic surface--group extension.
\end{abstract} 

%%%%%%%%%%%%%%%%%%%%%%%%%%%%%%%%%%%%%%
%%%%%%%%%%%%%%%%%%%%%%%%%%%%%%%%%%%%%%
\section{Introduction: mapping classes from fibrations}
%%%%%%%%%%%%%%%%%%%%%%%%%%%%%%%%%%%%%%
%%%%%%%%%%%%%%%%%%%%%%%%%%%%%%%%%%%%%%
If $X$ is a surface, let $\Mod(X) = \pi_0(\Homeo^+(X))$ be its mapping class group and let $\mathring{X}$ be the surface obtained from $X$ by removing a point. 

Surface bundles $X \to E \to B$ over a space $B$ with fiber $X$ are determined by homomorphisms $\pi_1(B) \to \Mod(X)$; see \cite{Morita.book}.   Thurston's Geometrization Theorem for fibered $3$--manifolds opens the door to an investigation of the geometric behavior of such surface bundles.   For instance, there are necessary and sufficient geometric conditions on $\pi_1(B) \to \Mod(X)$ that guarantee that $\pi_1(E)$ is word-hyperbolic; see \cite{Farb.Mosher.2002,hamenstadt}.  To verify these conditions, one is often faced with the problem of determining when a subgroup $G < \Mod(X)$ is purely pseudo-Anosov, a problem we take up here.

To describe our first result, let $N$ be a closed hyperbolic $3$--manifold that fibers over the circle with fiber a surface $S$, and let $N_\ZZ \to N$ be the corresponding infinite cyclic covering of $N$.
The long exact sequence of the fibration is concentrated in a short exact sequence 
	\begin{equation}\label{FibrationSequence}
		\begin{tikzpicture}[>= to, line width = .075em, baseline=(current bounding box.center)]
		\matrix (m) [matrix of math nodes, column sep=1.5em, row sep = 1em, 		text height=1.5ex, text depth=0.25ex]
		{
			1 & \pi_1(S)  & \pi_1(N) & \ZZ & 1 \\
		};
		\path[->,font=\scriptsize]
		(m-1-1) edge (m-1-2)
		(m-1-2) edge (m-1-3)
		(m-1-3) edge (m-1-4)
		(m-1-4) edge (m-1-5)
		;
		\end{tikzpicture}
	\end{equation}
which injects into the Birman exact sequence \cite{Birman}
	\[
	\begin{tikzpicture}[>= to, line width = .075em, baseline=(current bounding 		box.center)]
		\matrix (m) [matrix of math nodes, column sep=1.5em, row sep = 1em, 		text height=1.5ex, text depth=0.25ex]
		{
		1 & \pi_1(S)  & \Mod(\mathring{S}) & \Mod(S) & 1. \\
		};
		\path[->,font=\scriptsize]
		(m-1-1) edge (m-1-2)
		(m-1-2) edge (m-1-3)
		(m-1-3) edge (m-1-4)
		(m-1-4) edge (m-1-5)
		;
	\end{tikzpicture}
	\]
Choosing a lift $t$ of the generator of $\ZZ$ to $\pi_1(N)$, any element of $\pi_1(N)$ may be written uniquely as a product $g t^k$, where $g$ is an element of $\pi_1(S)$.
When $k$ is nonzero, this element represents a pseudo-Anosov mapping class in $\Mod(\mathring{S})$.
When $k$ is zero, this element lies in $\pi_1(S)$, and, by a theorem of Kra \cite{Kra} (see also \cite{KentLeiningerSchleimer}), it is pseudo-Anosov in $\Mod(\mathring{S})$ if and only if it fills $S$.
These observations were first made by Ian Agol \cite{Agol.PrivateCommunication}.

\begin{criterion}[Agol's criterion]\label{AgolCriterion} A subgroup $H$ of $\pi_1(N)$ is a purely pseudo-Anosov subgroup of $\Mod(\mathring{S})$ if and only if every nontrivial element of $H \cap \pi_1(S)$ fills $S$.
\end{criterion}

This topological criterion is very difficult to check.
Our main theorem is a geometric criterion for an element of $\pi_1(N_\ZZ)$ to be filling.

\begin{width}  Let $S$ be a closed oriented surface of Euler characteristic $\chi = \chi(S) < 0$ and let $\epsilon$ and $K$ be positive numbers.  
There is a $W = W(\chi, \epsilon, K) > 0$ such that the following holds. 
Equip $M = S \times \RR$ with any $\epsilon$--thick hyperbolic structure, and let $\ell \co M \to \RR$ be a $K$--Lipschitz submersion. 
If $Y$ is a proper incompressible subsurface of $S$ and $\calC_Y$ is the convex core of the corresponding cover of $M$, then the width $\diam(\ell(\calC_Y))$ of $\calC_Y$ is at most  $W$.
In particular, if $\gamma$ is a geodesic loop in $M$ such that $\diam(\ell(\gamma)) > W$, then $\gamma$ fills $S$.  
\end{width}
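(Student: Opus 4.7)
The plan proceeds in two stages. Stage one establishes that $M_Y\to M$ is convex cocompact, so that $\calC_Y$ is compact. Since $M=S\times\RR$ is $\epsilon$-thick it has infinite volume, and $\pi_1(Y)<\pi_1(S)=\pi_1(M)$ is finitely generated of infinite index (as $Y$ is proper and incompressible). Thurston's covering theorem, in the form extended to tame hyperbolic $3$-manifolds by Canary, then shows $M_Y$ is geometrically finite. Because $\epsilon$-thickness passes to covers, $M_Y$ has no cusps and is in fact convex cocompact, so $\calC_Y$ is a compact submanifold. The final sentence of the theorem is then a formal consequence: a geodesic loop $\gamma$ in $M$ that does not fill $S$ is homotopic into some proper incompressible subsurface $Y$, so a lift of $\gamma$ lies in $\calC_Y$ and $\diam(\ell(\gamma))\leq\diam(\ell(\calC_Y))\leq W$.

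Stage two is the width bound itself. Lift $\ell$ to a $K$-Lipschitz $\tilde\ell\co M_Y\to\RR$, so that $\ell(\calC_Y)=\tilde\ell(\calC_Y)$. By a theorem of Thurston, every point of $\calC_Y$ lies in the image of some pleated map $f\co(Y,\sigma)\to M_Y$ in the homotopy class of the inclusion, where $\sigma$ is a hyperbolic structure on $Y$ with geodesic boundary realizing the closed geodesics in $M_Y$ representing $\partial Y$. Since $f$ is $1$-Lipschitz, $\tilde\ell\circ f$ is $K$-Lipschitz on $(Y,\sigma)$, and it suffices to bound $\diam(\tilde\ell(f(Y)))$ uniformly in the data.

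The main obstacle is that the intrinsic diameter of $(Y,\sigma)$ need not be uniformly bounded: the boundary geodesics of $(Y,\sigma)$ have length equal to that of the closed geodesics representing $\partial Y$ in $M_Y$, and these lengths are not controlled by $\epsilon$-thickness alone. To handle this I would argue by contradiction via a Gromov--Hausdorff compactness argument. Assuming no $W$ suffices, extract sequences $(M_n,\ell_n,Y_n)$ with $Y_n$ of a fixed topological type $Y$ and $\diam(\ell_n(\calC_{Y_n}))\to\infty$; choose basepoints $p_n\in\calC_{Y_n}\subset M_{Y,n}$ near the extremes of $\tilde\ell_n$; and pass to pointed geometric limits of both the covers $(M_{Y,n},p_n)$ and their basemanifolds $(M_n,\pi_n(p_n))$. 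Uniform $\epsilon$-thickness gives $\epsilon$-thick limits $M_\infty$ and $M_\infty'$; the covering relation survives, and the $K$-Lipschitz submersions converge to a $K$-Lipschitz $\tilde\ell_\infty$. The divergence assumption forces the limiting convex core to have noncompact $\tilde\ell_\infty$-image, while another application of the covering theorem to $M_\infty\to M_\infty'$ (using infinite volume and topological control inherited from $M_n\approx S\times\RR$) forces the limiting convex core to remain compact---a contradiction. The delicate point, and the main technical work, is ensuring that sufficient submersion and topological structure survives in the limit for the covering theorem to apply.
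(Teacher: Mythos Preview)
Your Stage~1 matches the paper's Lemma~\ref{SubgroupsAreSchottky} essentially verbatim, and your deduction of the final sentence is correct.

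Your Stage~2, however, is both different from the paper's argument and genuinely incomplete.  The compactness argument you outline has exactly the problems you flag as ``delicate,'' and they are not easily repaired.  Geometric limits of the based sequences $(M_n,\pi_n(p_n))$ need not be homeomorphic to $S\times\RR$, need not have finitely generated fundamental group, and the covering relation $M_{Y,n}\to M_n$ need not pass to a covering in the limit; without these you cannot invoke the covering theorem again.  More fundamentally, your sketch never makes essential use of the submersion hypothesis on $\ell$---you only mention it at the end---yet the theorem is false for a general $K$--Lipschitz map (e.g.\ $\ell=$ distance to a point), so any correct proof must exploit it.

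The paper's approach is direct and avoids limits entirely.  The key idea you are missing is to work with a pleated surface representing the \emph{closed} surface $S$ in $M$ (not $Y$ in $M_Y$).  Such a pleated surface realizing $\partial Y^*$ has diameter bounded by some $B(\chi,\epsilon)$ because $S$ is closed and $M$ is $\epsilon$--thick, so $\partial Y^*$ has width at most $KB$.  Next one shows that $\partial\calC_Y$ lies in a uniform $D(\chi,\epsilon)$--neighborhood of $\partial Y^*$: the thick part of $\partial\calC_Y$ has bounded diameter, while any short loop in its thin part bounds a disk in $M_Y$ (by $\epsilon$--thickness) which must meet $\partial Y^*$ since $\partial Y$ is disk--busting.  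This bounds the width of $\partial\calC_Y$.  Finally---and this is where the submersion is used---since $\Pi$ is an immersion on $\calC_Y^\circ$ and $\ell$ is a submersion, $\ell\circ\Pi$ is a submersion on $\calC_Y^\circ$, hence attains its extrema on $\partial\calC_Y$.  So the width of $\calC_Y$ equals that of $\partial\calC_Y$.
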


If $\diam(\ell(\gamma)) > W$, we say that $\gamma$ is \textit{wide}.
Agol's criterion then becomes:

\begin{criterion}[Width criterion]\label{WidthCriterion} A subgroup $H$ of $\pi_1(N)$ is a purely pseudo-Anosov subgroup of $\Mod(\mathring{S})$ if every nontrivial element of $H \cap \pi_1(S)$ is wide.
\end{criterion}
\noindent
\begin{remarks}
1. The fact that geodesic representatives in $N$ of elements of $\pi_1(S)$ realized by {\it simple} closed curves on $S$ are not wide is fairly straightforward.\\
2. Filling elements need not be wide.
\end{remarks}

This criterion, and Theorem \ref{width.theorem}, arose out of the authors' attempts to find purely pseudo-Anosov surface subgroups of mapping class groups by exploiting the abundance of surface subgroups of hyperbolic $3$--manifold groups (see \cite{kahnmarkovic}).   

In Section \ref{S:CuspedCase} we prove a generalization of Theorem \ref{width.theorem} to the case of punctured surfaces, Theorem \ref{width.cusps.theorem}.  The authors and S. Dowdall use these theorems to prove the following.
\begin{theorem} [Dowdall--Kent--Leininger \cite{DowdallKentLeininger}]
Suppose $N$ is a finite volume hyperbolic $3$--manifold that fibers over the circle with fiber $S$ and $G < \pi_1(N)$.   
As a subgroup of $\Mod(\mathring{S})$, $G$ is convex cocompact in the sense of Farb and Mosher \cite{Farb.Mosher.2002} if and only if $G$ is finitely generated and purely pseudo-Anosov.
\end{theorem}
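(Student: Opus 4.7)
The forward implication is essentially due to Farb and Mosher \cite{Farb.Mosher.2002}: convex cocompactness in $\Mod(\mathring{S})$ already forces $G$ to be finitely generated and every infinite order element to be pseudo-Anosov. The content therefore lies in the converse. The plan has two main phases: first, show that $G$ is convex cocompact as a Kleinian subgroup of $\pi_1(N)$; then upgrade this geometric information to convex cocompactness in $\Mod(\mathring{S})$ by means of the width criterion.

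For the Kleinian phase, I would invoke the tameness theorem together with Thurston's covering theorem: a finitely generated subgroup of $\pi_1(N)$ is either geometrically finite or is a virtual fiber with respect to some finite cover of $N$ that fibers over the circle. A virtual fiber subgroup fails to be purely pseudo-Anosov---elements representing simple closed curves of the virtual fiber project to non-filling loops on $S$, which by Kra's theorem \cite{Kra} are not pseudo-Anosov on $\mathring{S}$---contradicting the hypothesis. The same hypothesis also rules out parabolic elements in $G$ when $N$ has cusps, so ``geometrically finite'' upgrades to ``convex cocompact'' in the Kleinian sense. Consequently, $G$ is quasi-isometrically embedded in $\pi_1(N)$, with compact convex core in $\HH^3/G$.

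For the upgrade, I would use the characterization of convex cocompactness in $\Mod(\mathring{S})$ via the curve complex of $\mathring S$: $G$ is convex cocompact precisely when an orbit map $G \to \mathcal{C}(\mathring{S})$ is a quasi-isometric embedding. The goal is to show that for each $g \in G$, the curve complex displacement of a basepoint by $g$ grows linearly in the word length $|g|_G$. Since $G$ is convex cocompact as a Kleinian group, the translation length of $g$ on $\HH^3$ is linear in $|g|_G$ and the closed geodesic representative of $g$ in $N$ has length comparable to $|g|_G$. Lifting to the cover corresponding to $\pi_1(S)$, which is homeomorphic to $\mathring{S} \times \RR$, and composing with the Lipschitz submersion $\ell$ from Theorem \ref{width.theorem} (or its counterpart Theorem \ref{width.cusps.theorem} in the cusped setting), one obtains a loop whose width grows linearly in $|g|_G$. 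The width theorem says ``very wide implies filling''; coupled with a quantitative comparison between width and curve complex translation length, this produces the required linear lower bound.

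The main obstacle is precisely this quantitative enhancement. Theorem \ref{width.theorem} is a dichotomy---wide implies filling---rather than a Lipschitz comparison between width and curve complex displacement. Refining the width theorem to give such a Lipschitz bound, or alternatively routing the argument through subsurface projections applied to proper subsurfaces of $\mathring{S}$, is where the real work lies. A secondary subtlety is the cusped setting: when $N$ has cusps the fiber $S$ is a non-compact finite-type surface, and the application of Theorem \ref{width.cusps.theorem} requires careful handling of parabolic subgroups of $\pi_1(N)$ and the associated structure in $\Mod(\mathring S)$.
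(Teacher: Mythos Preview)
The paper does not actually contain a proof of this theorem.  It is stated in the introduction as an application, with the proof deferred to the companion paper \cite{DowdallKentLeininger}; the present paper only supplies the width theorems (Theorems~\ref{width.theorem} and~\ref{width.cusps.theorem}) as ingredients.  So there is no proof here to compare your proposal against.

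That said, a few remarks on your sketch.  The Kleinian phase is essentially right, though your dismissal of the virtual--fiber case is too quick: a virtual fiber $F$ need not arise from the given fibration $N \to S^1$, so a simple closed curve on $F$ need not lie in $\pi_1(S)$ at all, and even if it does, it is not immediate that its image in $S$ is nonfilling.  One has to argue more carefully, for instance by noting that a geometrically infinite $G$ would force $G \cap \pi_1(S)$ to be large enough to contain nonfilling elements.

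The more serious issue is in your upgrade phase.  Width, as defined here, applies only to geodesics in the infinite cyclic cover $N_\ZZ \cong S \times \RR$, hence only to elements of $G \cap \pi_1(S)$; elements of $G$ mapping nontrivially to $\ZZ$ do not have closed geodesic lifts there, so your proposed linear comparison between width and word length does not even parse for most of $G$.  Moreover, as you yourself note, the width theorem is a threshold statement (wide implies filling), not a Lipschitz bound against curve--complex displacement in $\calC(\mathring S)$.  The actual argument in \cite{DowdallKentLeininger} uses the width bound to control how geodesics in $\calH_{\Graph{}}$ (or $N_\ZZ$) interact with convex cores of proper subsurfaces, and from this extracts the quasi--isometric embedding into $\calC(\mathring S)$ via subsurface--projection machinery---roughly the alternative route you allude to at the end.
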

In particular, this answers a special case of Question 1.5 of \cite{Farb.Mosher.2002}, and generalizes Theorem 6.1 of \cite{KentLeiningerSchleimer}.

\bigskip
\noindent
In Section \ref{S:GeneralBundles}, we generalize Theorem \ref{width.theorem} in a different direction by replacing $M$ with a hyperbolic surface--group extension $\Gamma$.
\begin{generalbundles} 
Let 
	\begin{equation}\label{EQ:ses}
	\begin{tikzpicture}[>= to, line width = .075em, baseline=(current bounding 		box.center)]
		\matrix (m) [matrix of math nodes, column sep=1.5em, row sep = 1em, 		text height=1.5ex, text depth=0.25ex]
		{
		1 & \pi_1(S)  & \Gamma & G & 1 \\
		};
		\path[->,font=\scriptsize]
		(m-1-1) edge 					(m-1-2)
		(m-1-2) edge 					(m-1-3)
		(m-1-3) edge 	node[auto]{$\ell$}	(m-1-4)
		(m-1-4) edge 					(m-1-5)
		;
	\end{tikzpicture}
	\end{equation}
be a short exact sequence with $\Gamma$ a hyperbolic group, and equip $\Gamma$ and $G$ with word metrics on finite generating sets.  
There is a $W > 0$ such that, given any nonfilling $\gamma$ in $\pi_1(S)$ and any $\gamma$--quasiinvariant geodesic $\mathcal{G}$ in $\Gamma$, we have $\diam(\ell(\mathcal{G})) \leq W$.
\end{generalbundles}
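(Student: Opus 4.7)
The strategy is to establish a combinatorial analogue of Theorem~\ref{width.theorem}, with the hyperbolic group $\Gamma$ replacing the thick hyperbolic $3$-manifold and a quasi-convex hull of a subsurface subgroup replacing the convex core of a subsurface cover.

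After conjugation, a nonfilling $\gamma\in\pi_1(S)$ lies in $\pi_1(Y)$ for some proper incompressible subsurface $Y\subsetneq S$, and its geodesic representative on $S$ is disjoint from a simple closed curve $c\subset\partial Y$. The hyperbolicity of $\Gamma$ supplies two ingredients. First, by Farb--Mosher and Hamenst\"adt, $G$ is convex cocompact in $\Mod(S)$: orbit maps $G\to\calC(S)$ are quasi-isometric embeddings with quasi-convex image, every infinite-order element of $G$ is pseudo-Anosov, and the stabilizer $G_c$ of $c$ in $G$ is finite. Second, by Mitra and Mj--Sardar, the inclusion $\pi_1(S)\hookrightarrow\Gamma$ admits a Cannon--Thurston map, and one has uniformly quasi-isometrically embedded ``ladders'' in $\Gamma$ associated to geodesics and subsurface subgroups of the fiber.

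The core of the plan is to show that $\pi_1(Y)$ is uniformly quasi-convex in $\Gamma$. One combines: (i) the convex cocompactness of $\pi_1(Y)$ as a Fuchsian subgroup of $\pi_1(S)$, whose quasi-convexity constant is bounded in terms of the hyperbolic area of $S$ alone (since the orbit of a basepoint lies within a bounded neighborhood of the convex hull of the limit set in $\HH^2$); and (ii) the Mitra/Mj--Sardar ladder description of how $\pi_1(S)$-quasi-convex subsets sit inside the hyperbolic extension $\Gamma$, which promotes (i) to a uniform quasi-convexity statement in $\Gamma$ provided that $G_c$ remains bounded in the base---which is automatic from its finiteness. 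Granting this, the $\gamma$-quasi-invariant geodesic $\mathcal{G}$ joins the two fixed points $\gamma^\pm\in\partial\Gamma$, which are the Cannon--Thurston images of the endpoints of the $\gamma$-axis in $\pi_1(Y)$ and hence lie in the limit set of $\pi_1(Y)\subset\Gamma$. Uniform quasi-convexity then places $\mathcal{G}$ in a uniform neighborhood of $\pi_1(Y)$, and since $\pi_1(Y)\subset\ker\ell$ and $\ell$ is coarsely $1$-Lipschitz, $\diam(\ell(\mathcal{G}))$ is bounded by twice the quasi-convexity constant.

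The main obstacle is the uniform quasi-convexity of $\pi_1(Y)$ in $\Gamma$, uniformly over the proper incompressible subsurface $Y$. Finiteness of the number of topological types of $Y$ controls the intrinsic data, and the Fuchsian constants inside $\pi_1(S)$ are uniform; the delicate part is propagating these through the ladder machinery to $\Gamma$, where $\pi_1(S)$ itself is exponentially distorted, so standard transitivity of quasi-convexity does not apply directly. The convex cocompactness of $G$ in $\Mod(S)$ is the decisive input: it enforces that the ladder associated to $\pi_1(Y)$ cannot escape the coarse stabilizer of $c$ in $G$, yielding the required uniform bound $W$.
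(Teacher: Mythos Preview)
There is a genuine gap at the heart of your plan: the assertion that the Mitra/Mj--Sardar ladder machinery ``promotes'' quasi-convexity of $\pi_1(Y)$ in $\pi_1(S)$ to uniform quasi-convexity of $\pi_1(Y)$ in $\Gamma$ is not what that machinery provides, and this is precisely the hard step.  What Mitra's construction (Lemma~\ref{L:qiladder} in the paper) gives is that the \emph{ladder} $\calA(\gamma)=\bigcup_{x\in X}\calA_x(\gamma)$---the union over \emph{all} fibers of the axes of $\gamma$---is uniformly quasi-isometrically embedded in $\calH_X$.  But $\calA(\gamma)$ surjects onto the entire base $X$ under $\pi$, so its width is infinite.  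The analogous ladder built from the convex hull of $\pi_1(Y)$ would likewise surject onto $X$.  Knowing that $G_c$ is finite tells you only that no element of $G$ coarsely preserves $\pi_1(Y)$ as a set; it does \emph{not} prevent the ladder from spreading over all of $X$, because the ladder is built fiberwise by closest-point projection and is insensitive to the stabilizer of $c$.  In short, you have conflated quasi-convexity of the ladder (which is wide) with quasi-convexity of the subgroup $\pi_1(Y)$ (which lives over a single point and whose quasi-convexity is the content of the theorem).

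The paper's route is to show instead that a \emph{single fiber} $\calA_{x_\gamma}(\gamma)$ of the ladder is a uniform quasigeodesic in $\calH_X$ (Theorem~\ref{T:generalbundlewidth3}).  This is established by constructing a coarse Lipschitz retraction $\calA(\gamma)\to\calA_{x_\gamma}(\gamma)$, which in turn requires a family of quasi-isometric sections $\{\Sigma_n\}$ that remain linearly ordered with uniform gaps in \emph{every} fiber (Theorem~\ref{T:generalbundlewidth4}).  Proving that the ordering and gaps persist is where the nonfilling hypothesis enters in an essential way: one passes via Farb--Mosher to the singular \textsc{Sol} metric over a Teichm\"uller geodesic and uses the Masur--Minsky analysis of quadratic differentials (Proposition~\ref{P:nonfillinggrowth}) to show that any sufficiently long subsegment of the balanced axis grows exponentially in both time directions.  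Your outline contains no analogue of this flaring argument, and the finiteness of $G_c$ is not a substitute for it.  Indeed, the uniform quasi-convexity of $\pi_1(Y)$ in $\Gamma$ that you want is essentially the content of the Corollary immediately following Theorem~\ref{T:generalbundlewidth} in the paper, and is \emph{derived from} the theorem rather than used to prove it.
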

\noindent Given an infinite cyclic subgroup of $G$, one obtains a short exact sequence
\[
	\begin{tikzpicture}[>= to, line width = .075em, baseline=(current bounding 		box.center)]
		\matrix (m) [matrix of math nodes, column sep=1.5em, row sep = 1em, 		text height=1.5ex, text depth=0.25ex]
		{
		1 & \pi_1(S)  & \Gamma_\ZZ & \ZZ & 1 \\
		};
		\path[->,font=\scriptsize]
		(m-1-1) edge 					(m-1-2)
		(m-1-2) edge 					(m-1-3)
		(m-1-3) edge 					(m-1-4)
		(m-1-4) edge 					(m-1-5)
		;
	\end{tikzpicture}
\]
that injects into \eqref{EQ:ses}, and one may be tempted to argue that Theorem \ref{T:generalbundlewidth} thus follows quickly from Criterion \ref{WidthCriterion}.
This attack is thwarted by the fact that $\Gamma_\ZZ$ is wildly metrically distorted in $\Gamma$.

Again, the authors and S. Dowdall apply Theorem \ref{T:generalbundlewidth} to prove the following theorem.
\begin{theorem}[Dowdall--Kent--Leininger \cite{DowdallKentLeininger}]\label{ConvexCocompactSubgroups.theorem}
Let
	\[
	\begin{tikzpicture}[>= to, line width = .075em, baseline=(current bounding 		box.center)]
		\matrix (m) [matrix of math nodes, column sep=1.5em, row sep = 1em, 		text height=1.5ex, text depth=0.25ex]
		{
		1 & \pi_1(S)  & \Gamma & G & 1 \\
		};
		\path[->,font=\scriptsize]
		(m-1-1) edge 					(m-1-2)
		(m-1-2) edge 					(m-1-3)
		(m-1-3) edge 					(m-1-4)
		(m-1-4) edge 					(m-1-5)
		;
	\end{tikzpicture}
	\]
be a short exact sequence with $\Gamma$ hyperbolic.
Any quasiconvex finitely generated purely pseudo-Anosov subgroup of $\Gamma \subset \Mod(\mathring S)$ is convex cocompact. \qed
\end{theorem}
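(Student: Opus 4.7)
The plan is to verify the standard characterization of convex cocompactness for $H$: namely, that the orbit map from $H$ to the curve graph $\calC(\mathring{S})$ is a quasi-isometric embedding, which for a finitely generated word hyperbolic purely pseudo-Anosov subgroup of $\Mod(\mathring{S})$ is equivalent to convex cocompactness (via the arguments of Kent--Leininger--Schleimer and Hamenst\"adt, adapted to the punctured setting). Since $H$ is quasiconvex in the hyperbolic group $\Gamma$, $H$ is itself word hyperbolic and quasi-isometrically embedded in $\Gamma$. Thus it suffices to show that any geodesic in $\Gamma$ with endpoints in an $H$-orbit maps to a quasi-geodesic in $\calC(\mathring{S})$ under an equivariant coarsely Lipschitz orbit map $\Gamma \to \calC(\mathring{S})$.

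The main mechanism is as follows. The elements of $\Gamma$ with bounded translation on $\calC(\mathring{S})$ are exactly the non-filling elements of $\pi_1(S)$, so any long subsegment of a $\Gamma$-geodesic whose image in $\calC(\mathring{S})$ is of bounded diameter must coarsely shadow a quasi-axis of such a non-filling $\gamma$. Theorem \ref{T:generalbundlewidth} bounds the $\ell$-image of every such quasi-axis by the universal constant $W$, so any bounded-projection subsegment has $\ell$-image of bounded diameter in $G$. On the other hand, because $\Gamma$ is a hyperbolic surface-group extension, the quotient $G$ acts by quasi-isometric embedding on $\calC(S)$ (by Farb--Mosher), bounding the $G$-length of the subsegment as well. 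Combined, these two bounds control the $\Gamma$-length of any bounded-projection subsegment, which upgrades any $H$-geodesic in $\Gamma$ to a quasi-geodesic in $\calC(\mathring{S})$ by a Morse-lemma-style argument; the purely pseudo-Anosov hypothesis rules out any tail of an $H$-orbit being absorbed into a non-filling stratum.

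The main obstacle is the extraction step: given a long $\Gamma$-geodesic subsegment with bounded projection to $\calC(\mathring{S})$, one must quantitatively produce a non-filling $\gamma \in \pi_1(S)$ together with a $\gamma$-quasi-invariant geodesic that fellow-travels the subsegment, so that Theorem \ref{T:generalbundlewidth} applies with uniform constants. Carrying this out requires disentangling excursions along the two ``directions'' of $\Gamma$---the $\pi_1(S)$-fibers and the $G$-quotient---using the coarse geometry of the extension, and then combining the width bound with the quasiconvexity of $H$ and the quasi-isometric embedding of $G$ into $\calC(S)$ to package everything into uniform quasi-geodesic constants. Once this extraction is made uniform in the constants coming from $\Gamma$ and $H$, the theorem follows.
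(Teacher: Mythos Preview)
The paper does not actually prove this theorem: it is stated with a terminal \qed and attributed to \cite{DowdallKentLeininger}, so there is no proof here to compare your proposal against. What the present paper contributes is Theorem~\ref{T:generalbundlewidth} (and its corollary on weak hulls), which is an ingredient exported for use in that other paper.

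That said, your outline has a structural problem beyond the extraction step you flag. Your two ``bounds'' are not independent: both the width theorem and the $G \hookrightarrow \calC(S)$ argument bound the \emph{same} quantity, namely $\diam(\ell(\sigma))$ in $G$ for a subsegment $\sigma$ with small $\calC(\mathring S)$--image. You then assert that ``combined, these two bounds control the $\Gamma$--length,'' but they do not. A $\Gamma$--geodesic can have bounded $G$--image and still be arbitrarily long in $\Gamma$---the fibers $\pi_1(S)$ are exponentially distorted in $\Gamma$, so bounded $G$--diameter imposes no bound on $\Gamma$--length. In particular, once you have invoked $G \hookrightarrow \calC(S)$ to get bounded $G$--diameter, the width theorem adds nothing further in your scheme, which is a sign that it is not being deployed in the right place.

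What the width theorem is genuinely good for is the contrapositive direction: it certifies that any $\Gamma$--quasigeodesic with \emph{large} $G$--image cannot be an axis of a nonfilling element, so the associated element must be pseudo-Anosov with definite translation on $\calC(\mathring S)$. To close the argument you need to show, roughly, that a long $\Gamma$--geodesic between $H$--points with small $\calC(\mathring S)$--image forces an element of $H$ to lie boundedly close to a large power of a nonfilling $\gamma \in \pi_1(S)$, and then use the purely pseudo-Anosov hypothesis to derive a uniform contradiction. Your last sentence gestures at this, but the mechanism you describe in the middle paragraph does not supply it. The actual argument in \cite{DowdallKentLeininger} organizes these pieces differently; as written, your sketch has a genuine gap at the point where $G$--diameter is supposed to become $\Gamma$--length.
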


%%%%%%%%%%%%%%%%%%%%%%%%%%%%%%%%%%%%%%
%%%%%%%%%%%%%%%%%%%%%%%%%%%%%%%%%%%%%%
% Acknowledgments
%%%%%%%%%%%%%%%%%%%%%%%%%%%%%%%%%%%%%%
%%%%%%%%%%%%%%%%%%%%%%%%%%%%%%%%%%%%%%
\bigskip
\noindent
\textbf{Acknowledgments.} The authors thank
Ian Agol, 
Jeff Brock, 
Dick Canary
and
Yair Minsky
for helpful conversations.
The authors also thank the referees for suggestions that have improved the paper considerably.

%%%%%%%%%%%%%%%%%%%%%%%%%%%%%%%%%%%%%%
%%%%%%%%%%%%%%%%%%%%%%%%%%%%%%%%%%%%%%
\section{Criterion to fill}
%%%%%%%%%%%%%%%%%%%%%%%%%%%%%%%%%%%%%%
%%%%%%%%%%%%%%%%%%%%%%%%%%%%%%%%%%%%%%
If $M$ is manifold, $\ell \co M \to \RR$ is a function, and $X$ is a subset of $M$, we define the \textit{width of $X$ with respect to $\ell$} (or simply the \textit{width of $X$}) to be $\diam(\ell(X))$.
If $X$ is a subset of any covering space of $\Pi \co N \to M$, we define the \textit{width} of $X$ to be $\diam(\ell(\Pi(X)))$.

Let $S$ be a closed orientable hyperbolic surface.
A closed curve in $S \times \RR$ is \textit{filling} if its projection to $S$ is filling.

If $M = S \times \RR$ is equipped with a hyperbolic metric and $Y$ is an incompressible subsurface of $S$, we let $\Gamma_Y$ be the Kleinian group corresponding to $\pi_1(Y) \subset \pi_1(M)$, and $\Pi:M_Y = \HH^3/\Gamma_Y \to M$ the corresponding cover.
We let $\calC_Y \subset M_Y$ denote the convex core.  We say that this hyperbolic structure is $\epsilon$--thick if the injectivity radius at every point is bounded below by $\epsilon$.

% STRONG FILLING CRITERION
\begin{theorem}\label{width.theorem}  Let $S$ be a closed oriented surface of Euler characteristic $\chi = \chi(S) < 0$ and let $\epsilon$ and $K$ be positive numbers.  
There is a $W = W(\chi, \epsilon, K) > 0$ such that the following holds. 
Equip $M = S \times \RR$ with any $\epsilon$--thick hyperbolic structure, and let $\ell \co M \to \RR$ be a $K$--Lipschitz submersion. 
If $Y$ is a proper incompressible subsurface of $S$, then the width of $\calC_Y$ is at most  $W$.
In particular, if $\gamma$ is a geodesic loop in $M$ such that $\diam(\ell(\gamma)) > W$, then $\gamma$ fills $S$.  
\end{theorem}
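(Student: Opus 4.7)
The plan is to first show that the convex core $\calC_Y$ is compact, then bound $\diam(\calC_Y)$ in terms of $\chi(S)$ and $\epsilon$, and finally convert this to a width bound using the Lipschitz hypotheses on $\ell$ and on the covering projection $\Pi$.

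For compactness: since $M$ is $\epsilon$-thick it has no cusps, and neither does the cover $M_Y$ (injectivity radius only grows under covers), so compactness of $\calC_Y$ is equivalent to $\Gamma_Y$ being convex cocompact. The cover $M_Y$ is topologically tame by Agol and Calegari--Gabai, so if it had a geometrically infinite end then the Covering Theorem of Thurston--Canary would provide a neighborhood of that end finitely covering an end of $M$ (the alternative in which the base has finite volume is excluded since $M\cong S\times\RR$ has infinite volume). But such a finite cover would force $\pi_1(Y)$ to contain a finite-index subgroup of $\pi_1(S)$, contradicting the assumption that $Y$ is proper. Hence $\calC_Y$ is compact.

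For the diameter bound: each component of $\partial\calC_Y$ is a pleated surface homotopy-equivalent to $Y$, so by Gauss--Bonnet has area at most $2\pi|\chi(Y)|\leq 2\pi|\chi(S)|$; combined with the $\epsilon$-thickness this bounds its intrinsic, and hence extrinsic, diameter in $M_Y$ by some $D_1 = D_1(\chi(S),\epsilon)$. To control the transverse extent of $\calC_Y$ one can run a pleated-surface sweep-out: every point of $\calC_Y$ lies on a pleated surface representing $\pi_1(Y)$ with pleating locus some geodesic lamination, each such pleated surface has bounded diameter in the $\epsilon$-thick setting, and continuity of the sweep-out connecting interior pleated surfaces to boundary pleated surfaces forces them all within bounded distance of $\partial\calC_Y$. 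This gives $\diam(\calC_Y)\leq D_2 = D_2(\chi(S),\epsilon)$. Since $\Pi$ is a local isometry (hence $1$-Lipschitz) and $\ell$ is $K$-Lipschitz, one obtains
\[
\diam(\ell(\Pi(\calC_Y)))\leq K\cdot\diam(\calC_Y)\leq K\cdot D_2(\chi(S),\epsilon),
\]
so we take $W(\chi,\epsilon,K):=K\cdot D_2(\chi,\epsilon)$. The final assertion is immediate: a geodesic loop failing to fill $S$ is contained in some proper incompressible subsurface $Y$ and hence has width at most $W$.

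The hardest step is the transverse diameter bound: ruling out the possibility that $\calC_Y$ is arbitrarily thick between its two pleated boundary surfaces. The compactness assertion is a clean application of the Covering Theorem, and the area and intrinsic diameter bounds for $\partial\calC_Y$ are standard $\epsilon$-thick Gauss--Bonnet and Margulis arguments. What requires genuine work is showing that no interior point of $\calC_Y$ is too far from the boundary; this can be attacked via a direct pleated-surface sweep-out, via a compactness argument on the moduli of $\epsilon$-thick convex-cocompact proper subsurface-group covers, or via bounded-geometry model-manifold machinery.
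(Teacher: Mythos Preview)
Your approach differs substantially from the paper's, and the divergence is precisely at the step you flag as hardest. You aim to bound the full diameter $\diam(\calC_Y)$ and then multiply by $K$; the paper never bounds $\diam(\calC_Y)$ at all. Instead it bounds only the width of $\partial\calC_Y$ and then exploits the \emph{submersion} hypothesis on $\ell$, which you never invoke: since $\Pi$ is an immersion on $\calC_Y^\circ$ and $\ell$ is a submersion, the composition $\ell\circ\Pi$ is a submersion on the interior and hence has no interior extrema, so the width of $\calC_Y$ equals the width of $\partial\calC_Y$. This one-line maximum-principle argument completely sidesteps your ``transverse diameter'' problem.

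Your argument, as written, has two genuine gaps. First, the bound on $\diam(\partial\calC_Y)$: you invoke ``Gauss--Bonnet plus $\epsilon$-thickness,'' but the intrinsic hyperbolic metric on the pleated surface $\partial\calC_Y$ need \emph{not} be $\epsilon$-thick. Curves on $\partial\calC_Y$ that are compressible in the handlebody $M_Y$ are null-homotopic in the $\epsilon$-thick manifold and so may be arbitrarily short intrinsically; the resulting thin annuli could give $\partial\calC_Y$ large intrinsic diameter. The paper's Lemma~\ref{NarrowBoundaries} handles exactly this: each such short compressible curve bounds a small disk in $M_Y$ which must meet the disk-busting geodesic $\partial Y^*$, so the thin part of $\partial\calC_Y$ lies within $\delta$ of $\partial Y^*$, and the thick part is within bounded distance of the thin part; meanwhile $\partial Y^*$ itself has bounded width because it sits on a pleated representative of $S$, which \emph{does} inherit $\epsilon$-thickness since it is $\pi_1$-injective. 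Second, your sweep-out step is not carried out: the assertion that every point of $\calC_Y$ lies on a pleated surface representing $\pi_1(Y)$, and that these interpolate continuously to $\partial\calC_Y$, is nontrivial for a handlebody with compressible boundary, and you do not execute any of the three alternatives you list. If your program can be completed it would yield the stronger conclusion that $\diam(\calC_Y)$ is uniformly bounded, but the paper's submersion trick delivers the stated width bound with far less work.
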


When $M$ is the cover of a fibered hyperbolic $3$--manifold corresponding to the fiber, the following lemma follows from the main theorem of \cite{Scott.Swarup.1990}.

\begin{lemma}\label{SubgroupsAreSchottky} If $M = S \times \RR$ is equipped with a hyperbolic structure without parabolics, and $Y$ is a proper incompressible subsurface of $S$, then the group $\Gamma_Y$ is a Schottky group (a convex cocompact free Kleinian group).
\end{lemma}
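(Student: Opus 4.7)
The plan is to deduce the lemma from three ingredients: elementary topology of proper incompressible subsurfaces, the Tameness Theorem of Agol and Calegari--Gabai, and Canary's Covering Theorem.

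First, since $Y$ is a proper incompressible subsurface of the closed surface $S$, its boundary is nonempty, so $Y$ has the homotopy type of a finite graph and $\Gamma_Y \cong \pi_1(Y)$ is a finitely generated free group. In particular $\Gamma_Y$ is finitely generated, so by Tameness the cover $M_Y = \HH^3/\Gamma_Y$ is topologically tame. Because $M$ has no parabolics, neither does $M_Y$, and so to conclude that $\Gamma_Y$ is convex cocompact it suffices to show that no end of $M_Y$ is geometrically infinite.

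Suppose for contradiction that $E$ is a geometrically infinite end of $M_Y$; by tameness $E$ is simply degenerate. Canary's Covering Theorem then supplies a neighborhood $U$ of $E$ such that the restriction $\Pi|_U \co U \to V$ is a finite-sheeted covering of a neighborhood $V$ of an end of $M$. Since $M$ is homeomorphic to $S \times \RR$, the end of $M$ covered by $U$ has a product neighborhood, and we may take $V \cong S \times [0,\infty)$, so that $\pi_1(V) \cong \pi_1(S)$. The finite covering $U \to V$ makes $\pi_1(U)$ a finite-index subgroup of $\pi_1(S)$, hence itself a closed surface group of negative Euler characteristic. But $\pi_1(U)$ also injects into $\Gamma_Y \cong \pi_1(Y)$, which is free; this contradicts the fact that no closed surface group of negative Euler characteristic is free.

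Hence every end of $M_Y$ is geometrically finite, so $\Gamma_Y$ is geometrically finite without parabolics, i.e.\ convex cocompact. Combined with $\Gamma_Y$ being free of finite rank, this shows $\Gamma_Y$ is a Schottky group. The main nontrivial input is Canary's Covering Theorem, whose only delicate hypothesis --- tameness of $M_Y$ --- is now automatic for any finitely generated Kleinian group.
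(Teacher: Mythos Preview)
Your proof is correct and follows essentially the same route as the paper: assume $\Gamma_Y$ is not convex cocompact, apply Canary's Covering Theorem to a geometrically infinite end of $M_Y$, and derive a contradiction from the fact that a free group cannot contain (or be commensurable with) a closed surface group. The only notable difference is that you invoke the full Tameness Theorem of Agol and Calegari--Gabai, whereas the paper observes directly that $M_Y \cong S_Y \times \RR$ is the interior of a handlebody (since $S_Y$ is the cover of $S$ corresponding to $\pi_1(Y)$), so tameness is immediate from the product structure without appealing to the deep theorem.
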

\begin{proof}[Proof of Lemma \ref{SubgroupsAreSchottky}]
Suppose that $\Gamma_Y$ is not Schottky a group.

Since $M$ has no cusps, and $\Pi$ is a covering, $M_Y$ also has no cusps.
So $\Gamma_Y$ must be geometrically infinite.

If we let $S_Y$ denote the covering of $S$ corresponding to $Y$ (which is homeomorphic to the interior of $Y$), then $M_Y \cong S_Y \times \RR$ is homeomorphic to the interior of a handlebody.
By Canary's Covering Theorem \cite{Canary.CoveringTheorem}, there is a neighborhood $\calE$ of the end of $M_Y$ such that $\Pi|_\calE$ is finite-to-one.
Since $\Pi$ is a covering map and $M_Y - \calE$ is compact, we conclude that $\Pi$ is finite-to-one.
But $M$ is homotopy equivalent to a closed surface and $\Gamma_Y$ is free. 
\end{proof}

\begin{proof}[Proof of Theorem \ref{width.theorem}]

Let $\partial Y^*$ be the geodesic representative of $\partial Y$ in $M$.

The geodesic multicurve $\partial Y^*$ is realized by a pleated surface $\calF \to M$ (see Theorem 5.3.6 of \cite{CanaryEpsteinGreen}).
Since $M$ is $\epsilon$--thick and $\calF \to M$ is a $1$--Lipschitz incompressible map, there is a number $B = B(\chi,\epsilon)$ that bounds the diameter of (the image of) $\calF$ in $M$.
Since $\ell$ is $K$--Lipschitz the width of $\cal F$ is at most $KB$, and hence so is the width of $\partial Y^*$. 

If $\calC_Y$ has no interior, we let $\partial \calC_Y$ be the double $\mathfrak{D} \calC_Y$, considered as a map $\mathfrak{D} \calC_Y \to \calC_Y \to M$.
Note that since $\Gamma_Y$ is Schottky, $\partial \calC_Y$ is a nonempty, compact pleated surface.

%%%%%%%%%%%%%%%%%%%%%%%%%%%%%%%%%%%%%%
%%%%%%%%%%%%%%%%%%%%%%%%%%%%%%%%%%%%%%
% LEMMA
%%%%%%%%%%%%%%%%%%%%%%%%%%%%%%%%%%%%%%
%%%%%%%%%%%%%%%%%%%%%%%%%%%%%%%%%%%%%%
\begin{lemma}\label{NarrowBoundaries} There is a number $W = W(\chi,\epsilon, K)$ such that $\partial \calC_Y$ has width less than $W$.
\end{lemma}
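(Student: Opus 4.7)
The plan is to bound $\diam_{M_Y}(\calC_Y)$ in terms of $\chi$ and $\epsilon$ alone. Since $\Pi\co M_Y\to M$ is a local isometry, $\ell\circ\Pi$ is $K$-Lipschitz on $M_Y$, and $\partial\calC_Y$ maps into $\calC_Y$; such a bound therefore gives $\diam(\ell(\Pi(\partial\calC_Y)))\le K\diam_{M_Y}(\calC_Y)$, which is exactly the width of $\partial\calC_Y$. I note first that $M_Y$ is itself $\epsilon$-thick, since injectivity radius is nondecreasing under covers.

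The naive strategy --- apply the bounded-diameter lemma for incompressible pleated surfaces to $\partial\calC_Y\subset M_Y$ --- does \emph{not} succeed, because $\partial\calC_Y$ is typically compressible in $M_Y$: its fundamental group surjects onto $\Gamma_Y=\pi_1(M_Y)$ with kernel generated by the meridians of the handlebody $\calC_Y$. In particular, short intrinsic geodesics on $\partial\calC_Y$ are \emph{not} excluded by the thickness of $M_Y$ --- their images in $M_Y$ are short but nullhomotopic --- and the intrinsic Margulis collars around them may have unbounded diameter. I therefore argue extrinsically, via volume.

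By Gauss--Bonnet $\mathrm{area}(\partial\calC_Y)\le 4\pi|\chi|$, since $|\chi(\partial\calC_Y)|\le 2|\chi|$ in both the genuine-boundary and doubled cases. The linear isoperimetric inequality $\mathrm{area}(\partial\Omega)\ge 2\,\mathrm{vol}(\Omega)$, valid for relatively compact $\Omega$ in any complete $3$-manifold of sectional curvature at most $-1$, applies to $M_Y$ and gives $\mathrm{vol}(\calC_Y)\le 2\pi|\chi|$ (vacuously zero when $\calC_Y$ has empty interior, as in the Fuchsian case). A standard convex-tube estimate in $\HH^3$ --- the normal Jacobian off a convex subset grows at most like $\cosh^2$ --- then bounds $\mathrm{vol}(N_{\epsilon/2}(\calC_Y))$ in terms of $\mathrm{vol}(\calC_Y)$, $\mathrm{area}(\partial\calC_Y)$, and $\epsilon$, hence in terms of $\chi$ and $\epsilon$.

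Finally I ball-pack: choose a geodesic segment $\sigma\subset\calC_Y$ realizing $D:=\diam_{M_Y}(\calC_Y)$ (it exists by convexity of $\calC_Y$) and pick $\lfloor D/\epsilon\rfloor$ points on $\sigma$ spaced $\epsilon$ apart. By $\epsilon$-thickness of $M_Y$ the $\epsilon/2$-balls around these points are pairwise disjoint and each is isometric to a hyperbolic $\epsilon/2$-ball of positive volume $c(\epsilon/2)$; all lie in $N_{\epsilon/2}(\calC_Y)$. A volume count then yields $D\le \epsilon\cdot\mathrm{vol}(N_{\epsilon/2}(\calC_Y))/c(\epsilon/2)$, bounding $D$ --- and hence the width of $\partial\calC_Y$ by $KD$ --- in terms of $\chi$, $\epsilon$, and $K$. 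The main obstacle is steering around the compressibility of $\partial\calC_Y$ in $M_Y$, which is what forces the volume/ball-packing detour rather than a direct pleated-surface argument; once the isoperimetric and convex-tube estimates are in hand, the ball-packing count is mechanical.
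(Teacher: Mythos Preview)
Your argument has a genuine gap at the isoperimetric step. The linear isoperimetric inequality $\mathrm{area}(\partial\Omega)\ge 2\,\mathrm{vol}(\Omega)$ holds for relatively compact domains in \emph{simply connected} complete manifolds of curvature $\le -1$ (Cartan--Hadamard manifolds), but it is \emph{not} valid in arbitrary complete hyperbolic $3$--manifolds. Indeed, your argument uses only that $M_Y$ is an $\epsilon$--thick Schottky manifold of rank $\le 1+|\chi|$, and from that alone one \emph{cannot} bound $\mathrm{vol}(\calC_Y)$: there exist $\epsilon$--thick Schottky manifolds of fixed rank whose convex cores have arbitrarily large volume and diameter. (Take a sequence of convex cocompact structures on a handlebody converging to a geometrically infinite structure with bounded geometry; the approximants stay uniformly thick while their convex--core volumes tend to infinity.) Since $\partial\calC_Y$ has area $\le 4\pi|\chi|$ throughout such a sequence, the inequality you invoke fails for $\Omega=\calC_Y$ once the volume is large enough. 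Your ball--packing argument therefore cannot close.

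The paper's proof succeeds precisely by using structure you discard: the disk--busting curve $\partial Y^*$. The intrinsically $\delta$--thin part $\calP(\delta)$ of $\partial\calC_Y$ consists of short curves that are compressible in $M_Y$ (by $\epsilon$--thickness of $M$), and the compressing disks must meet $\partial Y^*$; this pins $\calP(\delta)$ within $\delta$ of $\partial Y^*$. The thick complement $\partial\calC_Y-\calP(\delta)$ has components of diameter bounded by a constant $E(\chi,\delta)$, so all of $\partial\calC_Y$ lies in a $D$--neighborhood of $\partial Y^*$. Since $\partial Y^*$ sits on a pleated $S$ in the $\epsilon$--thick $M$ and hence has bounded width, the width of $\partial\calC_Y$ is bounded. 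Note that this argument genuinely uses that $M_Y$ covers the $\epsilon$--thick surface manifold $M$ (through $\partial Y^*$), not merely that $M_Y$ is $\epsilon$--thick.
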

\begin{proof}
Let $\delta$ be less than the minimum of $\epsilon$ and the $2$--dimensional Margulis constant.

There is a number $D = D(\chi,\epsilon)$ such that $\partial \calC_Y$ lies in the $D$--neighborhood of $\partial Y^*$.
To see this, let $\calP(\delta)$ be the $\delta$--thin part of $\partial \calC_Y$, and note that the components of $\partial \calC_Y - \calP(\delta)$ have diameters bounded above by a constant $E = E(\chi,\delta)$.
Since $M$ is $\epsilon$--thick, every loop in $\calP(\delta)$  bounds a disk in $M_Y$.
Moreover, every point in $\calP(\delta)$ lies in a loop of length less than $\delta$.
Such a loop bounds a disk in $M_Y$ of diameter at most $\delta$, and
since $\partial Y^*$ is disk--busting, every point of $\calP(\delta)$ is within $\delta$ of $\partial Y^*$. 
But every point of $\partial \calC_Y - \calP(\delta)$ is within $E$ of $\calP(\delta)$.  
Letting $D = E+\delta$, we have $\partial \calC_Y$ contained in the $D$--neighborhood of $\partial Y^*$.

Since $\partial Y^*$ has width at most $KB$, the width of $\partial \calC_Y$ is at most $W=KB + 2KD$.
\end{proof}

%%%%%%%%%%%%%%%%%%%%%%%%%%%%%%%%%%%%%%
%%%%%%%%%%%%%%%%%%%%%%%%%%%%%%%%%%%%%%
% END LEMMA
%%%%%%%%%%%%%%%%%%%%%%%%%%%%%%%%%%%%%%
%%%%%%%%%%%%%%%%%%%%%%%%%%%%%%%%%%%%%%

If $\partial \calC_Y = \calC_Y$, we are done by Lemma \ref{NarrowBoundaries}.
So we assume that $\calC_Y^\circ \neq \emptyset$.  The map $\pi \co \calC_Y \to M$ is an immersion on $\calC_Y^\circ$, and since $\ell$ is a submersion, the composition $\ell \circ \pi \co \calC_Y \to \RR$ is a submersion on $\calC_Y^\circ$ as well.
It follows that $\ell \circ \pi$ achieves its extrema on $\partial \calC_Y$.  
So the width of $\calC_Y$ equals the width of $\partial \calC_Y$, which is bounded by Lemma \ref{NarrowBoundaries}.
\end{proof}

%%%%%%%%%%%%%%%%%%%%%%%%%%%%%%%%%%%%%%
%%%%%%%%%%%%%%%%%%%%%%%%%%%%%%%%%%%%%%
\section{The cusped case}\label{S:CuspedCase}
%%%%%%%%%%%%%%%%%%%%%%%%%%%%%%%%%%%%%%
%%%%%%%%%%%%%%%%%%%%%%%%%%%%%%%%%%%%%%

Let $S$ be a noncompact finite--volume hyperbolic surface with Euler characteristic $\chi < 0$, and let $M$ be a hyperbolic manifold homeomorphic to $S \times \RR$.
Note that when $M$ is the infinite cyclic cover of a $3$--manifold fibering over the circle, the lift of the bundle projection is not a Lipschitz map to $\RR$.
As such projections are natural for measuring width, we find the naive analog of Theorem \ref{width.theorem} too restrictive.
In this section, we discuss the correct analog, where one must first project onto the complement of a neighborhood of the cusps before taking a Lipschitz projection to $\RR$ to compute widths.

Let $M = S \times \RR$, and equip $M$ with a type--preserving hyperbolic structure without accidental parabolics.  
Let $P \subset S$ denote a standard cusp neighborhood of the ends, so that $S^0 = S - P$ is a compact surface with boundary and $S^0 \to S$ is a homotopy equivalence.  
Let $\cusp = P \times \RR \subset M$ and set
	\[ 
		M^0 = M - \cusp = S^0 \times \RR.
	\]
We assume that the restriction of the hyperbolic metric to each component of $\cusp$ is isometric to a standard cusp neighborhood
	\[
		\cusp_3(r) = \big\{ (z,t) \in \HH^3 \mid t > r \big\} / \big\langle (z,t) \mapsto (z+1,t) \big\rangle,
	\]
for some $r$ satisfying $\arccosh(1 + 1/2r^2) < \mu_3$, where $\mu_3$ is the $3$--dimensional Margulis constant. 
We often write $\cusp(r) = \cusp$ when $r$ is relevant.

Given an essential subsurface $Y \subset S$, let $M_Y \to M$ denote the cover corresponding to $Y$ and $\calC_Y \subset M_Y$ its convex core.  
An argument similar to the proof of Lemma \ref{SubgroupsAreSchottky} shows that the Kleinian group $\Gamma_Y$ corresponding to $Y$ is geometrically finite without accidental parabolics.
The boundary $\partial \calC_Y$ is a locally convex pleated surface whose cusps are carried to  cusps of $M_Y$  (consequently, $\calC_Y$ is bent along a compact geodesic lamination).  
Each cusp of $\partial \calC_Y$ has a \textit{standard neighborhood} $\calU_r$ isometric to
	\[
		\cusp_2(r) = \big\{ (x,t) \in \HH^2 \mid t > r \big\} / \big\langle (x,t)
		\mapsto (x+1,t) \big\rangle 
		.
	\]
	
Note that there is a definite cusp neighborhood in any hyperbolic surface that misses every compact geodesic lamination.
To see this, fix a cusp neighborhood and consider a sequence of leaves of compact laminations reaching deeper and deeper into the cusp neighborhood.
By compactness, these leaves must be tangent to horocycles deeper and deeper in the cusp neighborhood.
But these horocycles are getting shorter and shorter, from which it is apparent that the leaves must eventually have self--intersections, providing a contradiction.
It follows that there is an $r_0 = r_0(\chi)$ such that $\calU_r$ is disjoint from the pleating locus when $r \geq r_0$. 
It follows that, for $r \geq r_0$, our $\calU_r$ is totally geodesic.
We take $r \geq \max\{ r_0 , (2\cosh(\mu_3) -1)^{-1/2}\}$, thus ensuring that $\calU_r$ is totally geodesic and carried into $\cusp$.

\begin{proposition}\label{ConvexCoreNormalForm.proposition} There is an $r = r(\chi)$ with the following property.
Equip $M = S \times \RR$ with a type--preserving hyperbolic metric without accidental parabolics,
and suppose each component of $\cusp$ is isometric to $\cusp_3(r)$. 
Let $Y \subset S$ be an essential subsurface whose corresponding cover $M_Y \to M$ has convex core $\calC_Y$.  
Then each component of the intersection of $\calC_Y$ and $\cusp$ is isometric to
	\[ 
		\cusp_3(r,R) = \left\{ (z,t) \in \HH^3 \mid t > r 
		\ \, \mathrm{and} \ \, 0 \leq \mathrm{Im}(z) \leq R \right\}/\langle (z,t) 
		\mapsto (z + 1,t) \rangle 
	\]
for some $R > 0$.
\end{proposition}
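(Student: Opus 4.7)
The plan is to work in the universal cover of a single cusp component, exploit the hypothesis that $r$ has been chosen so the cusp neighborhoods $\calU_r$ of $\partial\calC_Y$ are totally geodesic, and then invoke a classification of $P$--invariant totally geodesic planes in $\HH^3$ (where $P$ is the rank--$1$ parabolic subgroup) together with convexity of $\calC_Y$.

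First I would fix a component of $\cusp \cap \calC_Y$ corresponding to a particular cusp of $M_Y$, and lift to the universal cover of that cusp. Identifying the cusp point with $\infty \in \partial\HH^3$, the horoball neighborhood becomes $\{(z,t) \in \HH^3 \mid t > r\}$ and the stabilizer $P$ of $\infty$ in $\Gamma_Y$ is normalized to $\langle (z,t) \mapsto (z+1,t)\rangle$ (since the cusp is rank one and the metric is type--preserving without accidental parabolics, and since the cusps of $\partial\calC_Y$ are carried to cusps of $M_Y$, the cusp subgroup of the pleated surface injects into $P$ with the same generator).

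Next I would analyze $\partial \calC_Y$ in this cusp. By the choice of $r \geq r_0$ made just before the proposition, $\calU_r \subset \partial \calC_Y$ is disjoint from the pleating locus, hence each boundary component entering the cusp is a totally geodesic annular neighborhood of a cusp of $\partial\calC_Y$, carried into $\cusp$. Lifting such a component to the horoball gives a totally geodesic piece of a plane in $\HH^3$ that is set--wise invariant under $P$. I would then classify the totally geodesic planes in $\HH^3$ that meet the horoball $\{t>r\}$ non-trivially and are invariant under $P$: Euclidean hemispheres are ruled out since $P$ acts by a nontrivial Euclidean translation on $\partial \HH^3$; a vertical half-plane over a line $\ell \subset \CC$ is $P$--invariant iff $\ell$ is parallel to the real axis. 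Hence each boundary wall of $\calC_Y$ in the lift of the cusp has the form $\{\operatorname{Im}(z) = c\}$ for some $c \in \RR$.

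Finally, I would use convexity and $P$--invariance of the lift of $\calC_Y$ to force the intersection with the horoball to be a slab. Concretely, the lift is a closed convex $P$--invariant subset of $\{t>r\}$ bounded by vertical half-planes $\{\operatorname{Im}(z)=c_i\}$; any such set is of the form $\{(z,t) : t > r, \; c_1 \leq \operatorname{Im}(z) \leq c_2\}$, because a convex subset of $\HH^3$ invariant under a horizontal translation and bounded by horizontal vertical half-planes is determined by the extreme values of $\operatorname{Im}$ on its closure. Translating by $z \mapsto z - ic_1$ (an isometry commuting with $P$) identifies this slab with the claimed model $\cusp_3(r,R)$, where $R = c_2 - c_1$; positivity of $R$ follows because $\calC_Y$ has non-empty interior (so $\Gamma_Y$ is not Fuchsian, which holds under the implicit assumption that the ambient hyperbolic structure on $M$ is not Fuchsian). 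The main obstacle I expect is simply verifying rigorously that there is exactly one boundary wall on each side of the cusp, so that the convex core's intersection with the cusp is genuinely a slab rather than a more complicated convex region; this is handled by the classification in paragraph three together with the fact that $\calC_Y$ is connected and has $\partial\calC_Y \cap \cusp$ entirely contained in planes of the above form.
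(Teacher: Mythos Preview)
Your argument has a genuine gap: you assume that every piece of $\partial\calC_Y$ meeting $\cusp(r)$ is one of the totally geodesic cusp neighborhoods $\calU_r$, and hence that its lift is a $P$--invariant vertical half--plane. The discussion before the proposition only establishes that the cusp neighborhoods $\calU_r\subset\partial\calC_Y$ are totally geodesic and carried into $\cusp$; it does \emph{not} establish that $\partial\calC_Y\cap\cusp(r)=\calV_r$. A priori the \emph{non--cuspidal} part of the pleated surface $\partial\calC_Y$ can dip into $\cusp(r)$, and in the lift such pieces sit on hemispheres that are not $P$--invariant, so your classification of $P$--invariant planes does not apply to them. Consequently the convex set $\widetilde\calC_Y\cap\{t>r\}$ need not be bounded only by vertical half--planes, and your slab conclusion does not follow. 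This is also why taking $r=r_0$ is not enough: the constant $r$ in the proposition must be chosen strictly larger.

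Ruling out these non--cuspidal incursions is the actual content of the paper's proof. The paper first uses an area argument to show that, for $r$ large depending only on $\chi$, any pleated representative of $S$ meets $\cusp(r)$ only in its cusps. Then, if $\partial\calC_Y-\calV_r$ went deep into $\cusp(r)$, the bounded--area surface $\partial\calC_Y-\calV_r$ together with the no--accidental--parabolics hypothesis would force a compressible curve there bounding a disk $\calD\subset\calC_Y$ inside the cusp; since $\partial Y$ is disk--busting, $\partial Y^*$ would meet $\calD$ and hence $\cusp(r)$, contradicting the first step applied to a pleated surface realizing $\partial Y^*$. This bounds how far $\partial\calC_Y-\calV_r$ can enter $\cusp(r)$, and after enlarging $r$ one gets $\partial\calC_Y\cap\cusp(r)=\calV_r$. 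Once that is in hand, your vertical--plane/convexity argument (or the paper's horospherical--boundary argument) finishes the job; but that step is the easy part.
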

\begin{proof}
An area argument shows that if $r > 0$ is sufficiently large (depending only on $\chi$), any pleated surface representative of $S$ meets $\cusp(r)$ only in its cusps.
(To see this, note that if a pleated surface representative of $S$ plunges deep into $\cusp(r)$, its diameter would be large.
This forces one of two alternatives: either an essential curve on $S$  lies in $\cusp(r)$, violating our accidental parabolics hypothesis; or the pleated surface contains a large diameter disk, violating the Gauss--Bonnet Theorem.)
We assume that $r$ is at least this large, in addition to the constraints already imposed on $r$. 

Let $Y$ be an essential subsurface of $S$.
For a given $r > 0$, let $\calV_r$ be the union of the cusp neighborhoods $\calU_r \subset \partial \calC_Y$ constructed above.
If $r > 0$ is sufficiently large, and a point of $\partial \calC_Y - \calV_r$ is sufficiently deep in $\cusp(r)$, then area considerations again imply that $\partial \calC_Y - \calV_r$ must contain a compressible curve bounding a disk $\calD$ contained in $\calC_Y$ and some component of $\cusp(r)$.  
(As in the area argument above, the surface $\partial \calC_Y - \calV_r$ has bounded area and, paired with the no accidental parabolics hypothesis, this guarantees that any essential curve in $\partial \calC_Y - \calV_r$ lying in $\cusp(r)$ must be nullhomotopic there. This produces the desired disk.)
Since $\partial Y$ is disk--busting in $\calC_Y$, its geodesic representative $\partial Y^* \subset \calC_Y$ must intersect $\calD$, and hence $\cusp(r)$.
But this means that if $\calF \to M$ is any pleated surface representative of $S$ realizing $\partial Y^*$, then the noncuspidal part of $\calF$ must hit $\cusp(r)$, contradicting our choice of $r$.
We find that $\partial \calC_Y - \calV_r$ is carried a uniformly bounded distance (depending only on $\chi$) into $\cusp(r)$.
Choosing a larger $r$, we assume that $\partial \calC_Y$ hits $\cusp(r)$ only in the $\calU_r$.

Let $\cusp_Y(r)$ be the preimage of $\cusp(r)$ in $M_Y$.
Suppose $\calK$ is a component of $\calC_Y \cap \cusp_Y(r)$ which is not of the form $\cusp_3(r,R)$ for any $R >0$.
Then the closure of $\calK$ must intersect $\partial \cusp_Y(r)$ in a locally convex (horospherical) surface $\calH$.
This surface lies in $\calC_Y^\circ$, since $\partial \calC_Y$ hits $\cusp_Y(r)$ only in the $\calU_r$.  
Moreover, $\calH$ is compact, as $\calC_Y$ is compact after its cuspidal thin--part is thrown away.
But this all implies that $\partial  \cusp_Y(r)$ in $M_Y$ has a compact component, namely $\calH$, which is absurd.
We conclude that every component of $\calC_Y \cap \cusp_Y(r)$ has the form $\cusp_3(r,R)$.
It follows that every component of $\calC_Y \cap \cusp(r)$ has this form.
\end{proof}

We say that a hyperbolic structure on a noncompact manifold $M$ is \textbf{$\epsilon$--thick} if the length of its shortest geodesic loop is at least $\epsilon$.

\begin{theorem}\label{width.cusps.theorem}  Let $S$ be a finite--type noncompact oriented surface of Euler characteristic $\chi<0$.
Let $\epsilon$ and $K$ be positive numbers.
Equip $M = S \times \RR$ with an $\epsilon$--thick hyperbolic metric, and let $r = r(\chi)$ be the number  given by Proposition \ref{ConvexCoreNormalForm.proposition}.
There is a $W = W(\chi,\epsilon,K) > 0$  such that the following holds. 
Let $\ell \co M - \cusp(r) \longrightarrow \RR$ be a $K$--Lipschitz map and let $\nu \co M \longrightarrow M - \cusp(r)$ be the normal projection. 
If $Y$ is a proper incompressible subsurface of $S$ with convex core $\calC_Y$ mapping to $M$ via $\Pi\co \calC_Y \to M$, then $\diam(\ell(\nu(\Pi(\calC_Y)))) \leq W$.
If $\gamma$ is a geodesic loop in $M$ with $\diam(\ell(\nu(\gamma))) > W$, then $\gamma$ fills $S$.  
\end{theorem}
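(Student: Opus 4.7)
I would mirror the proof of Theorem \ref{width.theorem}, using Proposition \ref{ConvexCoreNormalForm.proposition} to handle the cusps. Fix $r$ so that Proposition \ref{ConvexCoreNormalForm.proposition} applies, and realize the non-peripheral components of $\partial Y^*$ by a pleated surface $\calF \to M$. Gauss--Bonnet bounds the area of $\calF$ by $2\pi|\chi|$, and the $\epsilon$--thick hypothesis then forces the noncuspidal part $\calF \cap M^0$ to have $M$--diameter at most some $B_0 = B_0(\chi,\epsilon)$. Since $\ell\circ\nu$ is $K$--Lipschitz on $M$, this gives $\diam(\ell(\nu(\calF))) \leq KB_0$.

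Next I establish the analog of Lemma \ref{NarrowBoundaries}: with $\calV_r = \bigcup \calU_r \subset \partial \calC_Y$ as in Proposition \ref{ConvexCoreNormalForm.proposition}, the noncuspidal piece $\partial \calC_Y \setminus \calV_r$ lies in an $M$--neighborhood of $\partial Y^* \subset \calF$ of radius $D = D(\chi,\epsilon)$. The argument reproduces that of Lemma \ref{NarrowBoundaries}: the Margulis thin part of $\partial \calC_Y \setminus \calV_r$ is forced to bound small disks in $M_Y$ by $\epsilon$--thickness, disk-busting of $\partial Y^*$ in $\calC_Y$ drags them near $\partial Y^*$, and the remaining thick components have bounded intrinsic diameter by Gauss--Bonnet. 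Combining with the previous step, $\diam(\ell(\nu(\partial \calC_Y \setminus \calV_r))) \leq W_1$ for some $W_1 = W_1(\chi,\epsilon,K)$.

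I then bound the widths $R_i$ of the standard cusp pieces $\cusp_3(r,R_i)$ of Proposition \ref{ConvexCoreNormalForm.proposition}. The two sides of such a piece sit on $\partial\cusp(r)$ at intrinsic flat distance $R_i/r$; they bound two cusps of $\partial\calC_Y$ that are joined in $\partial\calC_Y\setminus\calV_r$ by an intrinsic path of length at most some $D_1 = D_1(\chi,\epsilon)$ (using $\epsilon$--thickness of the induced hyperbolic metric on the pleated $\partial\calC_Y$ together with Gauss--Bonnet). By the choice of $r$ in Proposition \ref{ConvexCoreNormalForm.proposition}, this path lies entirely in $M^0$, and nearest-point projection of $M^0$ onto the horoball boundary $\partial \cusp(r)$ is $1$--Lipschitz, so $R_i/r \leq D_1$. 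Hence $R_i \leq D_1 r =: C$, and $\nu(\cusp_3(r,R_i)) \subset \partial\cusp(r)$ is a flat cylinder of Euclidean diameter at most $C + 1$, contributing at most $K(C+1)$ to the $\ell$--width.

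To finish I invoke the submersion argument. The set $\calC_Y^0 := \calC_Y \cap \Pi^{-1}(M^0)$ is compact by Proposition \ref{ConvexCoreNormalForm.proposition}, and on its interior $\ell\circ\Pi$ is a submersion (since $\Pi$ is an immersion on $\calC_Y^\circ$ and $\nu$ restricts to the identity on $M^0$), so extrema are attained on the topological boundary $(\partial\calC_Y\setminus\calV_r) \cup (\calC_Y \cap \Pi^{-1}(\partial\cusp(r)))$; the first summand is controlled by the previous paragraph and the second by the horospherical cusp cuts, which are flat rectangles of width $R_i \leq C$. Combined with the cuspidal contribution, this produces $\diam(\ell(\nu(\Pi(\calC_Y)))) \leq W(\chi,\epsilon,K)$. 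The filling consequence is standard: a nonfilling geodesic $\gamma$ is homotopic into some proper incompressible $Y \subset S$, so $\gamma \subset \Pi(\calC_Y)$, whence $\diam(\ell(\nu(\gamma))) \leq W$, contradicting wideness. I expect the main obstacle to be the cusp-width bound, which leans on the $1$--Lipschitz projection onto the horoball boundary and has no counterpart in the closed case treated by Theorem \ref{width.theorem}.
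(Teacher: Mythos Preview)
Your overall architecture matches the paper's: pleat $\partial Y^*$, bound the width of $\partial\calC_Y$ away from the cusps, and then run a submersion/extremum argument on $\calC_Y \setminus \cusp(r)$.  The divergence is your third step, the bound on the Euclidean widths $R_i$ of the slabs $\cusp_3(r,R_i)$.  The paper never attempts this and does not need it.  The point is that, by definition of the normal projection and the conclusion of Proposition~\ref{ConvexCoreNormalForm.proposition}, $\nu$ carries each slab $\cusp_3(r,R_i)$ onto its own base in $\partial\cusp(r)\cap\calC_Y$; hence
\[
\ell\bigl(\nu(\Pi(\calC_Y))\bigr)=\ell\bigl(\Pi(\calC_Y-\cusp(r))\bigr),
\]
and the cuspidal contribution is already absorbed into $\calC_Y-\cusp(r)$.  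No separate control of $R_i$ is required.

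Moreover, your argument for bounding $R_i$ does not work as written.  First, the induced hyperbolic metric on the pleated surface $\partial\calC_Y$ is \emph{not} $\epsilon$--thick in general: it may well have arbitrarily short closed geodesics, and the paper's Lemma~\ref{NarrowBoundariesWithCusps} handles these by observing that they bound disks in $M_Y$ (since $M$ is $\epsilon$--thick) and are therefore pinned near $\partial Y^*$---but this controls extrinsic, not intrinsic, diameter.  So the existence of a path in $\partial\calC_Y\setminus\calV_r$ of length at most $D_1(\chi,\epsilon)$ joining the two boundary cusps of a slab is not justified.  Second, nearest--point projection of $M^0$ onto $\partial\cusp(r)$ is $1$--Lipschitz only for the \emph{ambient} hyperbolic metric on the image; with respect to the induced Euclidean metric on the horosphere (the one in which your distance is $R_i/r$) it is exponentially expanding, so the inequality $R_i/r\le D_1$ does not follow.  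Drop this paragraph entirely and use the displayed identity above; the rest of your outline then coincides with the paper's proof.
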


We define the \textit{width} of a subset $X \subset M$ to be $\diam(\ell(\nu(X)))$, and of a subset $X \subset N$ of a covering space $\Pi:N \to M$ to be $\diam(\ell(\nu(\Pi(X))))$.

\begin{lemma}\label{NarrowBoundariesWithCusps} There is a constant $W = W(\chi,\epsilon, K)$ such that $\partial \calC_Y$ has width less than $W$.
In particular, the boundary $\partial \calX_Y$ of $\calX_Y = \calC_Y - \cusp(r)$ has width less than $W$.
\end{lemma}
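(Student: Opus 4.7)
My plan is to replicate the proof of Lemma~\ref{NarrowBoundaries}, adapting each step to handle the cusps of $M$ and the cuspidal ends of $\partial\calC_Y$ (the latter controlled by Proposition~\ref{ConvexCoreNormalForm.proposition}).  The first step is to bound the width of $\partial Y^*$.  Realize $\partial Y^*$ by a pleated surface $\calF \co S \to M$.  The noncuspidal part of $\calF$ has diameter at most $B = B(\chi,\epsilon)$ in $M^0 = M - \cusp(r)$ by $\epsilon$-thickness and Mumford compactness, while each cuspidal end of $\calF$ maps into $\cusp(r)$ and, under $\nu \co (z,t)\mapsto(z,r)$, projects to a horocircle on $\partial\cusp(r)$ of length $1/r$.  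Since $\ell$ is $K$-Lipschitz, both contributions are bounded, giving $\diam(\ell(\nu(\partial Y^*))) \le \diam(\ell(\nu(\calF))) \le KB'$ for some $B' = B'(\chi,\epsilon)$.

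The cuspidal ends of $\partial\calC_Y$ are handled identically: by Proposition~\ref{ConvexCoreNormalForm.proposition}, each component of $\calC_Y \cap \cusp(r)$ has the form $\cusp_3(r,R)$, and its intersection with $\partial\calC_Y$ is the union of two totally geodesic walls $\calU_r$ at $\mathrm{Im}(z)\in\{0,R\}$, each mapping under $\nu$ to a horocircle on $\partial\cusp(r)$ of length $1/r$ and thus contributing at most $K/r$ to the width.

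For the noncuspidal part $\partial\calC_Y - \calV_r$, I apply the $\delta$-thick/thin decomposition with $\delta$ smaller than both $\epsilon$ and the $2$-dimensional Margulis constant.  Mumford compactness bounds the $\delta$-thick diameter by some $E = E(\chi,\delta)$.  The noncuspidal thin parts are annular collars around essential short geodesics $\alpha$; the $1$-Lipschitz pleated map sends $\alpha$ to a loop in $M$ of length less than $\delta < \epsilon$.  By $\epsilon$-thickness and the absence of accidental parabolics for $\Gamma_Y$ (obtained as in Lemma~\ref{SubgroupsAreSchottky}), such a loop must be null-homotopic in $M$; it bounds a disk of diameter at most $\delta$ lifting to $M_Y$, which meets $\partial Y^*$ by the disk-busting property.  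Hence the noncuspidal part of $\partial\calC_Y$ lies within distance $E + \delta$ of $\partial Y^*$, yielding bounded $\ell$-width and finishing the first claim.

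For the \emph{in particular} statement, decompose $\partial\calX_Y = (\partial\calC_Y \cap M^0) \cup (\calC_Y \cap \partial\cusp(r))$; the first piece is already bounded.  The second consists of flat strips on $\partial\cusp(r)$ of dimensions $1/r \times R/r$ by Proposition~\ref{ConvexCoreNormalForm.proposition}.  \textbf{The main new obstacle} relative to Lemma~\ref{NarrowBoundaries} is a uniform bound on $R/r$: the two boundary horocircles of each strip are base horocircles of cuspidal ends $\calU_r$ of $\partial\calC_Y$ that emerge from the bounded-diameter thick part, so the $1$-Lipschitz pleating forces their ambient $M$-distance to be at most $E$, giving $R/r \leq E$.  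Each strip therefore contributes at most $KE$ to the width, and we take $W$ to be the sum of all contributions.
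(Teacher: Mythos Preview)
Your approach follows the paper's closely---both adapt the proof of Lemma~\ref{NarrowBoundaries} by handling the cuspidal and non-cuspidal thin parts of $\partial\calC_Y$ separately.  The paper's proof is terser: it shows $\partial\calC_Y - \cusp(r) \subset \calN_D(\partial Y^*)$ (using that cuspidal thin parts lie in $\cusp(r)$ while non-cuspidal thin parts are within $\delta$ of $\partial Y^*$), observes that the width of $\partial\calC_Y$ equals that of $\partial\calC_Y - \cusp(r)$, and stops there.  The ``in particular'' about $\partial\calX_Y$ is not separately argued in the paper.  You are right that $\partial\calX_Y$ also contains the horospherical strips $\calC_Y \cap \partial\cusp(r)$, and that bounding their width amounts to bounding $R$; so your treatment is more careful on this point.

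Your justification for the bound on $R$ has a small gap, however.  You claim the two boundary horocircles of a strip ``emerge from the bounded-diameter thick part'' of $\partial\calC_Y$, and hence are at ambient distance $\leq E$.  But the constant $E$ bounds the diameter of each \emph{component} of the $\delta$--thick part of $\partial\calC_Y$; there is no reason the two horocircles lie in (or adjacent to) the same thick component, and if they do not, the intrinsic path in $\partial\calC_Y$ joining them could pass through long Margulis tubes.  The fix is already implicit in what you have done: both horocircles lie in the closure of $\partial\calC_Y - \cusp(r) \subset \calN_D(\partial Y^*)$, and you have already bounded $\diam(\partial Y^*)$ (it sits in the noncuspidal part of the pleated surface $\calF$, whose diameter is at most $B$ since $\calF$ inherits $\epsilon$--thickness from $M$).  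Hence the two horocircles are at ambient $M$--distance at most $2D + B$, which bounds $R/r$ as desired.  With this correction your argument is complete.
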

\begin{proof}
The proof is similar to the proof of Lemma \ref{NarrowBoundaries}.

Let $\delta$ be the minimum of $\epsilon$ and $\arccosh(1 + 1/2r^2) < \mu_3$.

We again let $\partial \calC_Y$ be the double $\mathfrak{D} \calC_Y$ when $\calC_Y$ is $2$--dimensional,  considered as a map $\mathfrak{D} \calC_Y \to \calC_Y \to M$.

There is a  $D = D(\chi,\epsilon)$ such that $\partial \calC_Y - \cusp(r)$ lies in the $D$--neighborhood of $\partial Y^*$.
To see this, let $\calP(\delta)$ be the $\delta$--thin part of $\partial \calC_Y$. 
Note that, by our choice of $\delta$, we have $\partial \calC_Y - \cusp(r) \subset \partial \calC_Y - \calP(\delta)$.

The components of $\partial \calC_Y - \calP(\delta)$ have diameters uniformly bounded above by a constant $E = E(\chi,\delta)$.

The thin part $\calP(\delta)$ is a union of cusp--neighborhoods and neighborhoods of short geodesics.
The cusp neighborhoods lie in $\cusp(r)$.
As before, the geodesic neighborhoods are within $\delta$ of the disk--busting $\partial Y^*$.

We conclude that $\partial \calC_Y - \cusp(r)$ is contained in the $D$--neighborhood of $\partial Y^*$ for $D = E + \delta$.

Since $\partial Y^*$ has width at most $KB$, the width of $\partial \calC_Y$, which is equal to the width of $\partial \calC_Y - \cusp(r)$, is at most $W=KB +2KD$.
\end{proof}

\begin{proof}[Proof of Theorem \ref{width.cusps.theorem}]

The proof is essentially the same as the proof of Theorem \ref{width.theorem}.   If $\calC_Y^\circ$ is empty, then $\calC_Y = \partial \calC_Y$ and the theorem follows immediately from Lemma \ref{NarrowBoundariesWithCusps}.   
When $\calC_Y^\circ \neq \emptyset$, we first observe that by the definition of $\nu$ and Proposition \ref{ConvexCoreNormalForm.proposition}
	\[
		\diam(\ell(\nu(\Pi(\calC_Y)))) = \diam(\ell(\Pi(\calC_Y - \cusp(r)))).
	\]
The composition $\ell \circ \Pi$ restricted to $\calC_Y^\circ -\cusp(r)$ is a submersion and hence on $\calC_Y-\cusp(r)$ attains its maximum and minimum values on $\partial  \calX_Y$.  By Lemma \ref{NarrowBoundariesWithCusps}, the width of $\calC_Y$ is at most $W$.
\end{proof}

%%%%%%%%%%%%%%%%%%%%%%%%%%%%%%%%%%%%%%
%%%%%%%%%%%%%%%%%%%%%%%%%%%%%%%%%%%%%%
\section{General surface bundles}\label{S:GeneralBundles}
%%%%%%%%%%%%%%%%%%%%%%%%%%%%%%%%%%%%%%
%%%%%%%%%%%%%%%%%%%%%%%%%%%%%%%%%%%%%%

We again assume that $S$ is a \textit{closed} surface.

We assume that the reader is acquainted with the basic notions in the study of hyperbolic groups at the level of Chapters III.H and III.$\Gamma$ of \cite{BridsonHaefliger}.

Consider a short exact sequence $1 \to \pi_1(S) \to \Gamma \to G \to 1$  where $\Gamma$ is hyperbolic, which we call a \textit{hyperbolic sequence}. 
We choose a finite generating set for $\Gamma$ containing one for $\pi_1(S)$, which in turn provides one for $G$, and we let $\Graph{\pi_1(S)}$, $\Graph{\Gamma}$, $\Graph{G}$ be the corresponding Cayley graphs. As $\Graph{G}$ is of primary importance, we often write $\Graph{} = \Graph{G}$. 
There are simplicial maps
	\begin{equation*}
		\begin{tikzpicture}[>= to, line width = .075em, baseline=(current bounding box.center)]
		\matrix (m) [matrix of math nodes, column sep=1.5em, row sep = 1em, 		text height=1.5ex, text depth=0.25ex]
		{
			 \Graph{\pi_1(S)}  & \Graph{\Gamma}  & \Graph{G}  \\
		};
		\path[->,font=\scriptsize]
		(m-1-1) edge 						(m-1-2)
		(m-1-2) edge 	node[above]{$\pi$}		(m-1-3) 	
		;
		\end{tikzpicture}
	\end{equation*}
which induce our short exact sequence.   
For any  $\gamma$ in $\Gamma$, we let $\widetilde \gamma^*$ denote any geodesic in $\Graph{\Gamma}$ whose endpoints are the ideal fixed points of $\gamma$.  
So $\widetilde \gamma^*$ is a $\gamma$--quasiinvariant geodesic.

\begin{theorem} \label{T:generalbundlewidth}
Given a hyperbolic sequence $1 \to \pi_1(S) \to \Gamma \to G \to 1$, there is a $W > 0$ such that, given any nonfilling $\gamma$ in $\pi_1(S)$ and any $\gamma$--quasiinvariant geodesic $\widetilde \gamma^*$, we have $\diam(\pi(\widetilde \gamma^*)) \leq W$.
\end{theorem}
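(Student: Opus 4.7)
The strategy is to mirror the proof of Theorem \ref{width.theorem}, with the Cayley graph $\Graph{\Gamma}$ playing the role of the hyperbolic $3$--manifold $M$ and the simplicial projection $\pi \co \Graph{\Gamma}\to\Graph{G}$ playing the role of the Lipschitz submersion $\ell$. Since $\gamma$ is nonfilling, $\gamma \in \pi_1(Y)$ for some proper essential subsurface $Y\subset S$, and the goal reduces to bounding $\diam(\pi(\widetilde \gamma^*))$ uniformly in $Y$ and $\gamma$.

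The heart of the proof is to establish the following \emph{key lemma}, which is the coarse analog of Lemmas \ref{SubgroupsAreSchottky} and \ref{NarrowBoundaries}: there is a constant $Q$, depending only on the hyperbolic sequence, such that for every proper essential subsurface $Y \subset S$ the orbit $\pi_1(Y)\cdot e \subset \Graph{\Gamma}$ is $Q$--quasiconvex. Granting this, the theorem follows quickly: the endpoints of $\widetilde \gamma^*$ in $\partial\Gamma$ are the limit points of $\gamma^\ZZ \subset \pi_1(Y)$, and the Morse lemma combined with $Q$--quasiconvexity places $\widetilde \gamma^*$ in a uniform Hausdorff neighborhood of $\pi_1(Y)\cdot e$. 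Since $\pi_1(Y) \subset \pi_1(S) = \ker(\pi)$, this orbit projects to the identity vertex of $\Graph{G}$, and so $\pi(\widetilde\gamma^*)$ has diameter bounded by the Hausdorff constant times the Lipschitz constant of $\pi$---independent of $\gamma$ and of $Y$.

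To prove the key lemma, I would imitate the combined strategy of Lemmas \ref{SubgroupsAreSchottky} and \ref{NarrowBoundaries}. First, realize $\partial Y$ by a ``coarse pleated surface'' in $\Graph{\Gamma}$: a $\pi_1(Y)$--equivariant, coarsely Lipschitz map from the universal cover of a finite--complexity hyperbolic model surface into $\Graph{\Gamma}$, realizing the geodesic representative $\partial Y^*\subset\Graph{\Gamma}$. A coarse area/Gauss--Bonnet estimate then bounds this pleated surface's diameter in $\Graph{\Gamma}$, and in particular bounds the $\pi$--image of $\partial Y^*$. Second, use that $\partial Y$ is disk--busting in the free--group ``convex core'' of $\pi_1(Y)$ to show, as in Lemma \ref{SubgroupsAreSchottky}, that a quasiconvex hull of $\pi_1(Y)\cdot e$ is covered by the $\pi_1(Y)$--translates of this coarse pleated surface, giving the desired quasiconvexity. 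Uniformity in $Y$ follows from the finiteness of topological types of subsurfaces of $S$, since left--multiplication by $\Gamma$ acts on $\Graph{\Gamma}$ by isometries and the $G$--action permutes isotopy classes of subsurfaces within each topological type.

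The main obstacle is the construction of the coarse pleated surface with diameter bounded uniformly in $Y$. In the $3$--manifold setting this came for free from the geometry of $\HH^3$ and the classical pleated surface theorem. In the abstract hyperbolic--extension setting, one must substitute a model geometry for $\Graph{\Gamma}$---via, for example, Mosher's model for hyperbolic surface--group extensions or a Mj--Sardar metric bundle structure---in which pleated representatives of simple multicurves can be built with controlled geometry, and then verify that the coarse Gauss--Bonnet estimate still yields a diameter bound depending only on the complexity of $S$ and the constants of the sequence.
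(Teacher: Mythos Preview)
Your proposal identifies the right target---uniform quasiconvexity of $\pi_1(Y)$ in $\Gamma$ would indeed yield the theorem immediately---but the proposed proof of this key lemma has genuine gaps. First, ``coarse pleated surfaces'' with uniformly controlled diameter are not an existing tool in the hyperbolic--extension setting; you acknowledge this as ``the main obstacle'' and gesture toward a Mj--Sardar bundle model, but do not indicate how to carry out the construction or the Gauss--Bonnet estimate there. Second, your uniformity argument is flawed: while there are finitely many topological types of subsurfaces of $S$, there are \emph{infinitely many $G$--orbits} of isotopy classes of each type (a convex cocompact $G$ has quasiconvex, hence ``thin,'' orbits in the curve complex), so the isometry action of $\Gamma$ on $\Graph{\Gamma}$ does not reduce the problem to finitely many representatives. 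Third, even granting a bounded--width coarse pleated surface realizing $\partial Y^*$, the passage from ``$\partial \calC_Y$ narrow'' to ``$\calC_Y$ narrow'' in Theorem~\ref{width.theorem} used that $\ell \circ \Pi$ is a \emph{submersion} on $\calC_Y^\circ$, forcing extrema onto the boundary; there is no analogue of this maximum--principle step available in a coarse setting. In short, your key lemma is essentially equivalent in difficulty to the theorem itself, and the pleated--surface route does not supply a proof of it.

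The paper's argument takes an entirely different route. Rather than working with the subgroup $\pi_1(Y)$, it works directly with $\gamma$ and shows (Theorem~\ref{T:generalbundlewidth3}) that the axis $\calA_{x_\gamma}(\gamma)$ in the fiber over the length--minimizing point $x_\gamma \in \Graph{}$ is a uniform quasigeodesic in the bundle $\calH_{\Graph{}}$. This is done by building a coarse Lipschitz retraction $\calH_{\Graph{}} \to \calA_{x_\gamma}(\gamma)$ in two stages: Mitra's fiberwise closest--point projection onto $\calA(\gamma)$, followed by a collapse of $\calA(\gamma)$ onto $\calA_{x_\gamma}(\gamma)$ using a family of Mosher/Mj--Sardar quasiisometric sections. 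The crucial input---that these sections stay linearly ordered and uniformly separated over \emph{every} fiber (Theorem~\ref{T:generalbundlewidth4})---is exactly where the nonfilling hypothesis enters, and is established by passing to a nearby Teichm\"uller geodesic, replacing the fiberwise hyperbolic metric with the singular \textsc{Sol} metric, and invoking the Masur--Minsky analysis of how a nonfilling curve becomes almost horizontal and almost vertical along such a geodesic (Proposition~\ref{P:nonfillinggrowth}). This Teichm\"uller--theoretic mechanism is what produces the uniformity over all $\gamma$ and all $Y$, and it has no counterpart in the pleated--surface heuristic.
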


The statement needed in \cite{DowdallKentLeininger} is the following, which follows easily from Theorem \ref{T:generalbundlewidth}.  Given a hyperbolic sequence $1 \to \pi_1(S) \to \Gamma \to G \to 1$ and a proper subsurface $Y \subset S$ with associated subgroup $\Gamma_Y < \Gamma$, we let $\WHull(\Gamma_Y)$ denote the union of all quasiinvariant geodesic axes of elements in $\Gamma_Y$, called the \textit{weak hull of $\WHull(\Gamma_Y)$}.

\begin{corollary}
Given a hyperbolic sequence $1 \to \pi_1(S) \to \Gamma \to G \to 1$, there is a $W' > 0$ such that, given any proper subsurface $Y \subset S$ with corresponding subgroup $\Gamma_Y < \Gamma$ we have $\diam(\pi(\WHull(\Gamma_Y))) \leq W'$.
\end{corollary}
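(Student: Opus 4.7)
My plan is to deduce the corollary from Theorem~\ref{T:generalbundlewidth} by first bounding each individual axis projection and then showing these diameter--bounded subsets of $\Graph{G}$ all cluster near a common reference point in $\Graph{G}$. Since $Y \subsetneq S$ is a proper subsurface, every nontrivial $\gamma \in \Gamma_Y = \pi_1(Y) \subset \pi_1(S)$ represents a nonfilling loop on $S$, so Theorem~\ref{T:generalbundlewidth} applies directly and yields $\diam(\pi(\widetilde{\gamma}^*)) \leq W$ for every $\gamma$--quasiinvariant geodesic $\widetilde{\gamma}^*$, where $W$ is the constant from the theorem. The essential remaining task is therefore to bound, uniformly in $\gamma \in \Gamma_Y$ and in the choice of proper subsurface $Y$, the distance in $\Graph{G}$ from $\pi(\widetilde{\gamma}^*)$ to a fixed reference point such as the identity $1 \in G$.

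For the uniform clustering step, fix a nontrivial $\gamma_0 \in \Gamma_Y$ and note that $\pi(\widetilde{\gamma_0}^*)$ sits in a ball of radius $W$ around any point $p_0 \in \pi(\widetilde{\gamma_0}^*)$. For any other hyperbolic $\gamma \in \Gamma_Y$, I would exploit the fact that $\gamma \in \pi_1(S) = \ker \pi$: the cyclic orbit $\{\gamma^n\}$ fellow--travels $\widetilde{\gamma}^*$ in the $\delta$--hyperbolic space $\Graph{\Gamma}$, and all of its points project to $1 \in \Graph{G}$. To chain this to the fixed axis $\widetilde{\gamma_0}^*$, I would apply Theorem~\ref{T:generalbundlewidth} a second time to the bridging element $\gamma_0^N \gamma \in \Gamma_Y$ for large $N$, whose $\gamma_0^N \gamma$--quasiinvariant geodesic can be arranged, by a standard thin--triangle / ping--pong argument in the hyperbolic group $\Gamma$, to fellow--travel $\widetilde{\gamma_0}^*$ for a long initial segment before branching off to a translate of $\widetilde{\gamma}^*$. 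The bound $\diam(\pi(\widetilde{\gamma_0^N\gamma}^*)) \leq W$ then forces $\pi(\widetilde{\gamma_0}^*)$ and $\pi(\widetilde{\gamma}^*)$ to lie within a uniform distance depending only on $W$ and $\delta$.

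The main obstacle I expect is verifying the uniformity of this fellow--traveling estimate for products in the hyperbolic group $\Gamma$---specifically, that for every pair $\gamma_0,\gamma \in \Gamma_Y$ one can realize the axis of $\gamma_0^N\gamma$ as passing within a $\delta$--only bounded distance of both $\widetilde{\gamma_0}^*$ and a translate of $\widetilde{\gamma}^*$, independently of the specific elements and of the subsurface $Y$. Once this is executed via standard stability of quasi-geodesics in $\delta$--hyperbolic spaces, the resulting universal constant is of the form $W' = 2(W + C(\delta))$, which bounds $\diam(\pi(\WHull(\Gamma_Y)))$ uniformly in $Y$ as required.
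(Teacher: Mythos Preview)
Your approach is essentially the paper's: find a ``bridging'' element of $\Gamma_Y$ whose quasiinvariant geodesic passes near both given axes, and apply Theorem~\ref{T:generalbundlewidth} to that element. The paper carries this out pairwise---given $\gamma_1,\gamma_2\in\Gamma_Y$ with axes $\widetilde{\gamma}_1^*,\widetilde{\gamma}_2^*$, it picks points $x_i\in\widetilde{\gamma}_i^*$ realizing the diameter of the projection, translates by powers of $\gamma_i$ (which does not change $\pi(x_i)$ since $\gamma_i\in\ker\pi$) so that $x_1$ is far from $\widetilde{\gamma}_2^*$ and vice versa, and then observes that some $\gamma_3\in\Gamma_Y$ has axis within $2\delta$ of both $x_1$ and $x_2$. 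This yields $W'=W+4\delta$ directly, without the detour through a fixed reference point or the (non-uniform) fellow-traveling of $\{\gamma^n\}$ with $\widetilde{\gamma}^*$.

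There is, however, a genuine error in your specific bridging element. The axis of $\gamma_0^N\gamma$ does \emph{not} fellow-travel a translate of $\widetilde{\gamma}^*$. For large $N$, the attracting fixed point of $\gamma_0^N\gamma$ is near $\gamma_0^{+\infty}$, and the repelling fixed point of $(\gamma_0^N\gamma)^{-1}=\gamma^{-1}\gamma_0^{-N}$ is near $\gamma^{-1}(\gamma_0^{-\infty})$; so the axis fellow-travels $\widetilde{\gamma_0}^*$ and the $\gamma^{-1}$--translate of $\widetilde{\gamma_0}^*$, both of which have the same $\pi$--image as $\widetilde{\gamma_0}^*$. You learn nothing about $\pi(\widetilde{\gamma}^*)$ this way. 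The fix is to use $\gamma_0^N\gamma^M$ with both $N$ and $M$ large: then the axis fellow-travels $\widetilde{\gamma_0}^*$ on one side and a $\pi_1(S)$--translate of $\widetilde{\gamma}^*$ on the other, and since $\pi_1(S)=\ker\pi$ this translate has the same $\pi$--image as $\widetilde{\gamma}^*$. With that correction your argument goes through and is equivalent to the paper's.
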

\begin{proof}
Let $W$ be as in Theorem \ref{T:generalbundlewidth}, let $\delta$ be the hyperbolicity constant for $\Gamma$, and set $W' = W + 4 \delta$.  
Given two elements $\gamma_1$ and $\gamma_2$ in $\Gamma$, let $\widetilde \gamma_1^*$ and $\widetilde \gamma_2^*$ be a pair of respective quasiinvariant geodesics.  
It suffices to show that $\diam(\pi(\widetilde \gamma_1^* \cup \widetilde \gamma_2^*)) \leq W'$, since the diameter of $\pi(\WHull(\Gamma_Y))$ is bounded by the supremum of such diameters over all pairs of quasiinvariant axes for all pairs of elements in $\Gamma_Y$.

We choose points $x_i$ in $\widetilde \gamma_i^*$ with $ \diam(\pi(x_1 \cup x_2)) = \diam(\pi(\widetilde \gamma_1^* \cup \widetilde \gamma_2^*))$.
Applying $\gamma_i$ to $\widetilde \gamma_i^*$ for $i = 1,2$, we assume that $x_1$ and $x_2$ are far from $\widetilde \gamma_2$ and $\widetilde \gamma_1$, respectively.   
There is then a third element $\gamma_3$ in $\Gamma_Y$ with a quasiinvariant geodesic $\widetilde \gamma_3^*$ that contains $x_1$ and $x_2$ in its $2\delta$--neighborhood $\calN_{2\delta}(\widetilde \gamma_3^*)$.  
Since $\gamma_3$ is in $\Gamma_Y$ and $\pi$ is $1$--Lipschitz, Theorem \ref{T:generalbundlewidth} gives us
	\[ 
		\diam(\pi(\widetilde \gamma_1^* \cup \widetilde \gamma_2^*)) = \diam(\pi(x_1 \cup x_2)) \leq \diam(\calN_{2 \delta}(\widetilde \gamma_3^*)) \leq W + 4 \delta = W'. 
		\qedhere
	\]
\end{proof}

The short exact sequence
$1 \to \pi_1(S) \to \Gamma \to G \to 1$
gives us a monodromy representation $\rho\co G \to \Mod(S)$.  
By \cite{Farb.Mosher.2002}, hyperbolicity of the sequence implies that $\rho$ has finite kernel and that $G_0 = \rho(G)$ is a \textit{convex cocompact} subgroup of $\Mod(S)$, meaning that $G_0$ has a quasiconvex orbit in Teichm\"uller space. 

The preimage of $G_0$ in $\Mod(\mathring S)$ is an extension $\Gamma_{G_0}$ of $G_0$ by $\pi_1(S)$, which is the homomorphic image of $\Gamma$, and we have the commutative diagram with exact rows
	\begin{equation*}
		\begin{tikzpicture}[>= to, line width = .075em, baseline=(current bounding box.center)]
		\matrix (m) [matrix of math nodes, column sep=1.5em, row sep = 2em, 		text height=1.5ex, text depth=0.25ex]
		{
			1  
				& \pi_1(S) 
				& \Mod(\mathring S) 
				& \Mod(S) 
				& 1
				\\
			 1 
		 		& \pi_1(S)   
				& \Gamma_{G_0}  
				& G_0
				& 1 
				\\
			 1  
			 	& \pi_1(S)  
				& \Gamma 
				& G  
				& 1
				\\
		};
		\path[->,font=\scriptsize]
		(m-1-1) edge 	(m-1-2)
		(m-1-2) edge 	(m-1-3)
		(m-1-3) edge 	(m-1-4)
		(m-1-4) edge 	(m-1-5)
		
		(m-2-1) edge 	(m-2-2)
		(m-2-2) edge 	(m-2-3)
		(m-2-3) edge 	(m-2-4)
		(m-2-4) edge 	(m-2-5)
		
		(m-3-1) edge 	(m-3-2)
		(m-3-2) edge 	(m-3-3)
		(m-3-3) edge 	(m-3-4)
		(m-3-4) edge 	(m-3-5)
		;
		\draw[double distance = .15em,font=\scriptsize]
		(m-2-2) -- 	(m-1-2)
		(m-3-2) --	(m-2-2)
		;
		\path[right hook->,font=\scriptsize]
		(m-2-3) edge 	(m-1-3)
		(m-2-4) edge	(m-1-4)
		;
		\path[->>,font=\scriptsize]
		(m-3-3) edge 	(m-2-3)
		(m-3-4) edge	(m-2-4)
		;
		\end{tikzpicture}
	\end{equation*}
The map $\Gamma \to \Gamma_{G_0}$ also has finite kernel, and is thus a quasiisometry.  
Using stability of geodesics in Gromov hyperbolic spaces (Theorem III.H.1.7 of \cite{BridsonHaefliger}), one can easily check that it suffices to prove Theorem \ref{T:generalbundlewidth} when $\rho\co G \to G_0$ is an isomorphism.
We therefore assume that $G$ is a convex cocompact subgroup of $\Mod(S)$ and that $\Gamma = \Gamma_G = \Gamma_{G_0}$.

There is a canonical $S$--bundle $\calS(S)$ over Teichm\"uller space $\Teich(S)$ in which the fiber over $[m]$ in $\Teich(S)$ is identified with $S$ endowed with the hyperbolic metric $m$.  
The universal cover of this space is a hyperbolic plane bundle $\calH(S) \to \Teich(S)$.
The Bers fibration \cite{Bers.1973} identifies $\calH(S)$ and the Teichm\"uller space $\Teich(\mathring{S})$ of $\mathring S$, and we have the commutative diagram with equivariant actions
	 \begin{equation*}
		\begin{tikzpicture}[>= to, line width = .075em, baseline=(current bounding box.center)]
		\matrix (m) [matrix of math nodes, column sep=1.5em, row sep = .1em, 		text height=1.5ex, text depth=0.25ex]
		{
			1  
				& \pi_1(S) 
				& \Mod(\mathring S) 
				& \Mod(S) 
				& 1
				\\
				& \circlearrowright
				& \circlearrowright
				& \circlearrowright
				\\
		 		& \HH^2
				& \calH(S) 
				& \Teich(S)
				\\ 
				& & & 
				\\ 
				& & & 
				\\ 
				& & & 
				\\ 
				& & & 
				\\ 
				& & & 
				\\ 
				& & & 
				\\ 
				& & & 
				\\ 
				& & & 
				\\ 
				& & & 
				\\ 
				& & & 
				\\ 
				& & & 
				\\
				& & & 
				\\
				& & & 
				\\
				& & & 
				\\
				& & & 
				\\
				& S  
				& \calS(S) 
				& \Teich(S)
				\\
		};
		\path[->,font=\scriptsize]
		(m-1-1) edge 	(m-1-2)
		(m-1-2) edge 	(m-1-3)
		(m-1-3) edge 	(m-1-4) 
		(m-1-4) edge 	(m-1-5)
		(m-3-2) edge 	(m-3-3)
		(m-3-3) edge 	(m-3-4)
		(m-19-2) edge 	(m-19-3)
		(m-19-3) edge 	(m-19-4)
		(m-3-2) edge 	(m-19-2)
		(m-3-3) edge 	(m-19-3)
		(m-3-4) edge 	(m-19-4)
		;
		\end{tikzpicture}
	\end{equation*}
 
We fix a connection on $\calS(S) \to \Teich(S)$, meaning that we choose smoothly varying direct--sum decomposition of each tangent space of $\calS(S)$ into the tangent space of the fiber and a choice of \textit{horizontal space}.

We pick a $G$--equivariant embedding $\Graph{} = \Graph{G} \to \Teich(S)$ which sends edges to geodesics, and which is therefore Lipschitz.  
We have pullback bundles 
	\begin{equation*}
		\begin{tikzpicture}[>= to, line width = .075em, baseline=(current bounding box.center)]
		\matrix (m) [matrix of math nodes, column sep=1.5em, row sep = 1.5em, 		text height=1.5ex, text depth=0.25ex]
		{
			\HH^2 
				& \calH_{\Graph{}}
				& \Graph{}
				\\
			 S 
		 		& \calS_{\Graph{}}   
				& \Graph{}
				\\
		};
		\path[->,font=\scriptsize]
		(m-1-1) edge 	(m-1-2)
		(m-1-1) edge	(m-2-1)
		
		(m-1-2) edge 	(m-1-3)
		(m-1-2) edge	(m-2-2)
		
		(m-2-1) edge	(m-2-2)
		(m-2-2) edge	(m-2-3)
		
		;
		\draw[double distance = .15em,font=\scriptsize]
		(m-1-3) --	(m-2-3)
		;
		\end{tikzpicture}
	\end{equation*}
and we call $\HH^2 \to \calH_{\Graph{}} \to \Graph{}$  an \textit{associated hyperbolic plane bundle}.
For $x$ in $\Graph{}$, we let $\calH_x$ denote the fiber of $\calH_{\Graph{}} \to \Graph{}$ over $x$.
We let $\pi$ stand for any of the maps $\calH_{\Graph{}} \to \Graph{}$, $\calS_{\Graph{}} \to \Graph{}$, and $\Graph{\Gamma} \to \Graph{}$, letting context determine which is meant.

Pulling our connection back to $\calS_{\Graph{}}$, we equip $\calS_{\Graph{}}$ with a piecewise Riemannian metric that locally splits as a product of the hyperbolic metric on the fibers and the metric lifted from $\Graph{}$.
We pull this metric back to $\calH_{\Graph{}}$.

Given two points $x$ and $y$ in $\Graph{}$ and a geodesic between them, there is a parallel transport map $\calH_x \to \calH_y$ defined by following the horizontal lines of the connection over the geodesic.
Since $G$ acts cocompactly on $\Graph{}$, there is a $\MitraProjK > 0$ so that for any two points $x$ and $y$ in $\Graph{}$, this map is $\MitraProjK^{d(x,y)}$--bilipschitz with respect to the hyperbolic metrics on the fibers.

There is a fiber--preserving $\Gamma$--equivariant quasiisometry $\Graph{\Gamma} \to \calH_{\Graph{}}$ making the following diagram commute:
	\begin{equation*}
		\begin{tikzpicture}[>= to, line width = .075em, baseline=(current bounding box.center)]
		\matrix (m) [matrix of math nodes, column sep=1.5em, row sep = 1.5em, 		text height=1.5ex, text depth=0.25ex]
		{
			\Graph{\pi_1(S)} 
				& \Graph{\Gamma}
				& \Graph{}
				\\
			 \HH^2 
		 		& \calH_{\Graph{}}   
				& \Graph{}
				\\
		};
		\path[->,font=\scriptsize]
		(m-1-1) edge 	(m-1-2)
		(m-1-1) edge	(m-2-1)
		
		(m-1-2) edge 	(m-1-3)
		(m-1-2) edge	(m-2-2)
		
		(m-2-1) edge	(m-2-2)
		(m-2-2) edge	(m-2-3)
		
		;
		\draw[double distance = .15em,font=\scriptsize]
		(m-1-3) --	(m-2-3)
		;
		\end{tikzpicture}
	\end{equation*}

Given $\gamma$ in $\pi_1(S)$, let $\calA_x(\gamma)$ denote the axis of $\gamma$ in the fiber $\calH_x$ and define a subset $\calA(\gamma)$ of $\calH_{\Graph{}}$ by
	\[ 
		\calA(\gamma) 
			= \bigcup_{x \in \Graph{}} \calA_x(\gamma).
	\]
Let $x_\gamma$ in $\Graph{}$ be a point for which the translation length of $\gamma$ on $\calA_{x_\gamma}(\gamma)$ is minimal over all $\calA_x(\gamma)$. 
We endow $\calA(\gamma)$ with the subspace metric coming from the path metric on the $1$--neighborhood $\calN_1(\calA(\gamma))$, and denote both of these metrics by $d_\gamma$.

By the stability of geodesics in hyperbolic spaces (Theorem III.H.1.7 of \cite{BridsonHaefliger}), the following theorem implies Theorem \ref{T:generalbundlewidth}. 

\begin{theorem} \label{T:generalbundlewidth3}
Given a hyperbolic sequence $1 \to \pi_1(S) \to \Gamma \to G \to 1$ with associated hyperbolic plane bundle $\HH^2 \to \calH_{\Graph{}} \to \Graph{}$, there exist $K,C > 0$ such that if $\gamma$ in $\pi_1(S)$ is a nonfilling loop in $S$, then $\calA_{x_\gamma}(\gamma)$ is a $(K,C)$--quasigeodesic in $\calH_{\Graph{}}$.
\end{theorem}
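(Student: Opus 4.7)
The plan is to establish the nontrivial inequality $d_{\calH_{\Graph{}}}(p,q) \geq K\, d_\gamma(p,q) - C$ for $p,q \in \calA_{x_\gamma}(\gamma)$; the reverse inequality is immediate, since the fiber $\calH_{x_\gamma}$ is isometrically embedded via the local product metric and contains the axis as a geodesic.

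First I would pick a geodesic $\sigma$ in $\calH_{\Graph{}}$ from $p$ to $q$, and project it to a loop $\bar\sigma = \pi(\sigma)$ in $\Graph{}$ based at $x_\gamma$ of excursion radius $R = \max_t d(\bar\sigma(t), x_\gamma)$. Two cheap estimates are available: $|\sigma| \geq |\bar\sigma| \geq 2R$, since $\pi$ is $1$--Lipschitz in the horizontal direction; and $|\sigma| \geq \MitraProjK^{-R}\, d_\gamma(p,q)$, since parallel transport of $\sigma$ into the fiber $\calH_{x_\gamma}$ along $\bar\sigma$, with bilipschitz constant $\MitraProjK^{R}$, yields a competing fiber path which must be at least as long as the axis segment between $p$ and $q$. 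On their own these two estimates give only a logarithmic lower bound on $|\sigma|$, so the essential extra ingredient is the flaring property for the fiber length function $x \mapsto |\gamma|_x$, a consequence of the hyperbolicity of $\Gamma$ in the sense of Farb--Mosher. Flaring asserts that $|\gamma|_x$ grows exponentially in $d(x,x_\gamma)$, so a geodesic $\sigma$ that crosses fibers far from $x_\gamma$ must traverse translates of the axis with exponentially larger translation length; iterating using the $\gamma$--equivariance of the situation (comparing $\sigma$ with $\gamma \cdot \sigma$ over successive fundamental segments of the axis) then converts this into the desired linear lower bound on $|\sigma|$.

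The main obstacle is that the flaring rate in a general hyperbolic extension is not a priori uniform in $\gamma$, so that the quasi--geodesic constants produced by the above argument could depend on $\gamma$. This is precisely where the non-filling hypothesis must be used: it confines $\gamma$ to a proper incompressible subsurface of $S$, analogously to the role played by a Schottky subgroup $\Gamma_Y$ in Theorem \ref{width.theorem}, whose weak hull supports the Mitra projection and QI--section machinery set up earlier in the paper (with uniform constants $\MitraProjK$, $\MitraProjC$, $\OurQISectionK$, $\OurQISectionC$). Producing a single flaring rate valid for all non-filling $\gamma$, from these uniform Mitra-type estimates, will be the technical heart of the proof, and will yield the final quasi--geodesic constants $K$ and $C$ depending only on the hyperbolic sequence and the chosen generating set.
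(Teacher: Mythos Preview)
Your outline correctly identifies the heart of the matter---uniform flaring of the fiberwise length of $\gamma$ away from $x_\gamma$---but it does not prove it, and the mechanism you hint at will not supply it. The Mitra projection and the Mj--Sardar sections give uniform constants for the projection $\closest{\gamma}\co \calH_{\Graph{}}\to \calA(\gamma)$ and for the existence of $(\OurQISectionK,\OurQISectionC)$--sections through any point; they do \emph{not} by themselves produce a uniform exponential growth rate for $x\mapsto \|\gamma\|_{m_x}$. In particular, ``$\gamma$ lies in a proper subsurface $Y$'' plus ``the weak hull of $\Gamma_Y$ supports Mitra's machinery'' does not yield flaring: Mitra's estimates are about coarse Lipschitz retractions onto $\calA(\gamma)$, not about growth of translation length along fibers. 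Your final paragraph essentially defers the entire content of the theorem to an unproved claim.

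The paper's proof is structurally different and supplies exactly the missing ingredient. Rather than estimate an ambient geodesic $\sigma$ directly, it builds a coarsely Lipschitz retraction $\calH_{\Graph{}}\to\calA(\gamma)\to\calA_{x_\gamma}(\gamma)$: the first arrow is Mitra's fiberwise projection, and the second is defined via a family $\{\Sigma_n\}$ of $(\OurQISectionK,\OurQISectionC)$--sections, evenly spaced by $\GapOverShortest$ over $x_\gamma$, whose ordering and uniform lower spacing $\GapOverRelShortest$ persist over \emph{every} $x\in\Graph{}$ (Theorem~\ref{T:generalbundlewidth4}). Proving that persistence is where nonfilling is actually used: one restricts to a biinfinite quasigeodesic in $\Graph{}$ through $x$ and $x_\gamma$, passes via Farb--Mosher (Theorem~\ref{T:hyperbolictosolv}) to the singular \textsc{Sol} bundle over the nearby Teichm\"uller geodesic $\tau_q$, and then invokes the Masur--Minsky analysis of flat geodesics. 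The nonfilling hypothesis enters concretely as the existence of a disjoint \emph{simple} closed curve $\alpha$; Masur--Minsky control the time it takes for $\alpha$---and hence, after a further uniform delay, for $\gamma$---to become almost horizontal (respectively vertical), yielding Proposition~\ref{P:nonfillinggrowth}: every segment of $\widetilde\gamma_0^*$ of length $\geq e^{\OurT}$ grows like $e^{|t|}$ in $q_t$--length. This is the uniform flaring statement you need, and there is no shortcut to it through the Mitra constants alone.
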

\begin{proof}[Proof that Theorem \ref{T:generalbundlewidth3} implies \ref{T:generalbundlewidth}]
The quasi-isometry $\Graph{\Gamma} \to \calH_{\Graph{}}$ sends $\widetilde \gamma^*$ to a $\gamma$--quasi-invariant uniform quasi-geodesic which is therefore uniformly close to $\calA_{x_\gamma}(\gamma)$.  Since $\calA_{x_\gamma}(\gamma)$ projects to the point $x$, and the projection to $X$ is Lipschitz, the image of $\widetilde \gamma^*$ is within some uniform distance $W$ of the image of $x$.
\end{proof}

The rest of the paper is devoted to the proof of Theorem \ref{T:generalbundlewidth3}, which is inspired by the ideas in \cite{Farb.Mosher.2002}, \cite{hamenstadt}, \cite{Mj.Sardar.2012}, \cite{MasurMinsky.1999}.  
As the argument is somewhat involved, we pause to give a detailed sketch.

\subsubsection{Outline of the rest of the paper.}
\begin{proof}[Sketch of the proof of Theorem \ref{T:generalbundlewidth3}]
The basic idea is to construct a retraction $\calH_{\Graph{}} \to \calA_{x_\gamma}(\gamma)$ that is uniformly coarsely Lipschitz. 
Being \textit{coarsely Lipschitz} means that there are $K', C'>0$ so that the distance in $\calA_{x_\gamma}(\gamma)$ between the image of any two points is at most $K'$ times their distance in $\calH_{\Graph{}}$, up to an additive error of $C'$, and \textit{uniformity} means that these constants do not depend on $\gamma$.
The existence of such a map implies that $\calA_{x_\gamma}(\gamma)$ is uniformly quasigeodesic.  

The map $\calH_{\Gamma{}} \to \calA_{x_\gamma}(\gamma)$ is a composition of two maps $\calH_{\Gamma{}} \to \calA(\gamma) \to \calA_{x_\gamma}(\gamma)$.  

The construction of the first map $\calH_{\Graph{}} \to \calA(\gamma)$, and the fact that it is uniformly coarsely Lipschitz (see Lemma \ref{L:qiladder}), is due to Mitra \cite{Mitra.1998}.
(This does not use the assumption that $\gamma$ is nonfilling.)  
This first map is defined as the fiber--wise closest--point projection: the restriction to a fiber $\calH_x$ is the closest point projection to $\calA_x(\gamma)$ with respect to the hyperbolic metric on the fiber.
The details of this step are in Section \ref{S:fiberwiseprojection}.

The second map $\calA(\gamma) \to \calA_{x_\gamma}(\gamma)$ is defined using a collection $\{ \Sigma_n \}_{n \in \ZZ}$ of sections $\Sigma_n \subset \calA(\gamma)$ of the projection $\calA(\gamma) \to \Graph{}$ introduced in Section \ref{S:qisection}. 
These sections have the following properties (see Theorem \ref{T:generalbundlewidth4}):
\begin{enumerate}
\item The section map $\Graph{} \to \Sigma_n \subset \calA_{x_\gamma}(\gamma)$ is a uniform quasiisometry.
\item For any $x$ in $\Graph{}$, the fiber $\calA_x(\gamma) \cong \RR$ intersects the set of sections in a biinfinite increasing sequence of points $\{ \Sigma_n \cap \calA_x(\gamma) \}_{n \in \ZZ}$. 
In other words, the sections intersect the fibers in order, escaping to the ends.
\item In the distinguished fiber $\calA_{x_\gamma}(\gamma)$, the distance between consecutive points of $\{\Sigma_n \cap \calA_{x_\gamma}(\gamma)\}_{n \in \ZZ}$ is constant, and
\item The distance between consecutive points of $\{\Sigma_n \cap \calA_x(\gamma)\}_{n \in \ZZ}$ is uniformly bounded below.
\end{enumerate}
The existence of sections with the first and third properties is due to Mj and Sardar \cite{Mj.Sardar.2012}.
This is based on a result of Mosher \cite{Mosher.1996} that provides uniform quasiisometrically embedded sections through any point of $\calA(\gamma)$ (see Lemma \ref{L:qisectioninladder}).  

The second and fourth properties require some new ideas, explained below, and require the hypothesis that $\gamma$ is nonfilling, unlike the first and third.  Before this explanation, we  describe the map $\calA(\gamma) \to \calA_{x_\gamma}(\gamma)$.  Each of the fibers is isometric to $\RR$ and, by the second property, the sections cut these fibers into intervals.  
The union of the intervals from $\Sigma_n$ to $\Sigma_{n+1}$ over all $x$ forms a region $\calR_n$, and the map $\calA(\gamma) \to \calA_{x_\gamma}(\gamma)$ is defined by sending this entire region to the point $\Sigma_n \cap \calA_{x_\gamma}(\gamma)$.  
Uniform properness of the fibers implies that this map is uniformly coarsely Lipschitz as required.
The detailed construction of this second map is in Section \ref{S:QIAxisFromSections}.

To establish the second and fourth properties of the sections, note that for any $x$ in $\Graph{}$, there is a uniform biinfinite quasigeodesic $g$ in $\Graph{}$ through $x$ and $x_\gamma$.  
This quasigeodesic is uniformly close to a Teichm\"uller geodesic $\tau$ in $\Teich(S)$.  Moreover, the closest point projection from $g$ to $\tau$ lifts to a fiber--preserving map between the corresponding hyperbolic plane bundles
\[ 
\calH_g \to \calH_\tau,
\]
and a result of Farb and Mosher \cite{Farb.Mosher.2002} shows that this map may be taken a uniform quasiisometry.    
To understand the sequence $\{\Sigma_n \cap \calA_x(\gamma)\}_{n \in \ZZ}$, we analyze its image in $\calH_g$.  This lies in some fiber, and is a biinfinite sequence uniformly close to the axis for $\gamma$ in that fiber.  As long as all estimates are uniform, it therefore suffices to consider a sequence of sections $\{ \Sigma_n \}_{n \in \ZZ}$ of the axis bundle $\calA_\tau(\gamma)$ over $\tau$.

The Teichm\"uller geodesic $\tau$ is defined by a quadratic differential  (see \ref{S:teichmuller geodesics}).  
It is therefore natural to replace the fiber--wise hyperbolic metric on $\calH_\tau$ with the singular  \textsc{Sol} metric (see Section \ref{S:singular sol}) for which the restriction to each fiber is the Euclidean cone metric defined by the quadratic differential (see Section \ref{S:qd and flat}).  
This is done at the expense of a uniform distortion in distances (see Lemma \ref{L:FMuniformqi}), by a result of Minsky \cite{Minsky.1994}.  
We thus reduce further to the axis bundle $\calA_\tau(\gamma)^{ \textsc{Sol}}$ for $\gamma$ with respect to the singular \textsc{Sol} metric and the attendant sections $\Sigma_n$, for which we prove properties 2 and 4.

The problem is now a technical one concerning geodesics in the Euclidean cone metrics of quadratic differentials.
We refer the general audience to Section \ref{S:teichmuller geodesics and lengths} for definitions and details, and briefly sketch the key points for the expert.

The point $x_\gamma$ is uniformly close to the \textit{balance time} for $\gamma$ along $\tau$, which we take to be $\tau(0)$, so the role of $x_\gamma$ in property 3 is taken by $\tau(0)$.   
Using arguments of Masur and Minsky \cite{MasurMinsky.1999}, we prove that any segment of $\calA_{\tau(0)}(\gamma)^{\textsc{Sol}}$ of sufficient length (depending only on $\Gamma$), must increase in length exponentially in both forward and backward time along $\tau$ after a uniformly bounded amount of time (see  Proposition \ref{P:nonfillinggrowth}).   
Taking the distance between consecutive points of the fiber to be sufficiently large, properties 2 and 4 follow.

To establish this exponential growth, we argue as follows.  
There is a simple closed curve $\alpha$ disjoint from $\gamma$, since $\gamma$ is nonfilling.  
From \cite{MasurMinsky.1999}, we know that $\alpha$ becomes \textit{mostly horizontal} and \textit{mostly vertical}, respectively, after a uniformly bounded amount of time into the future and the past, respectively, measure from time zero at the balance point.   
We prove that after further uniform steps forward and backward in time, $\gamma$ itself becomes mostly horizontal and vertical, respectively.   There cannot be too many consecutive short saddle connections (by a compactness argument), and so, in the remote future and past, exponential growth kicks in for any sufficiently long segment.  This is the last step and completes the proof.

We note that, due to certain logical dependencies, the description just given does not follow the sections below linearly.  
\end{proof}

%%%%%%%%%%%%%%%%%%%%%%%%%%%%%%
\subsection{Fiberwise projection} \label{S:fiberwiseprojection}
%%%%%%%%%%%%%%%%%%%%%%%%%%%%%%

The following construction is due to Mitra \cite{Mitra.1998} and is used throughout his work.
Consider the map $\closest{\gamma}\co \calH_{\Graph{}} \to \calA(\gamma)$ obtained by fiberwise closest point projection to $\calA(\gamma)$.  
That is, for $z$ in $\calH_x$, let $\closest{\gamma}(z)$ be the point on $\calA_x(\gamma)$ which is closest to $z$ with respect to the hyperbolic metric on $\calH_x$.   
The following lemma is a translation to our setting of the results in Section 3 of Mitra's paper \cite{Mitra.1998}.  
We give the proof for the reader's convenience.
\begin{lemma}[Mitra \cite{Mitra.1998}] \label{L:qiladder}
Given a hyperbolic sequence $1 \to \pi_1(S) \to \Gamma \to G \to 1$ with  associated hyperbolic plane bundle $\HH^2 \to \calH_{\Graph{}} \to \Graph{}$, there are $\MitraQIK, \MitraQIC > 0$ such that for any $\gamma$ in $\pi_1(S)$, the projection $\closest{\gamma}\co \calH_{\Graph{}} \to \calA(\gamma)$ is $(\MitraQIK, \MitraQIC)$--coarsely Lipschitz.  Consequently, $\calA(\gamma)$ is $(\MitraQIK, \MitraQIC)$--quasiisometrically embedded in $\calH_{\Graph{}}$. \qed
\end{lemma}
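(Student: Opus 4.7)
The plan is to establish the coarse Lipschitz property of $\closest{\gamma}$ directly; the quasi-isometric embedding statement then follows formally, since $\closest{\gamma}$ restricts to the identity on $\calA(\gamma)$: for $a_1, a_2 \in \calA(\gamma)$ this gives $d_\gamma(a_1,a_2) = d_\gamma(\closest{\gamma}(a_1),\closest{\gamma}(a_2)) \leq \MitraQIK\, d(a_1,a_2) + \MitraQIC$, while the reverse inequality $d(a_1,a_2) \leq d_\gamma(a_1,a_2)$ is immediate because $d_\gamma$ is defined as the subspace metric coming from the path metric on $\calN_1(\calA(\gamma))$.

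To bound $d_\gamma(\closest{\gamma}(z_1),\closest{\gamma}(z_2))$ in terms of $d(z_1,z_2)$, I would first reduce to the case $d(z_1,z_2) \leq 1$, using that $\calH_{\Graph{}}$ carries a piecewise Riemannian path metric. If both points lie in a common fiber $\calH_x$, then the fiberwise projection is closest-point projection in the hyperbolic plane onto the geodesic $\calA_x(\gamma)$, hence $1$-Lipschitz; this case is immediate. The substantive case is $z_1 \in \calH_x$, $z_2 \in \calH_y$ with $d(x,y) = 1$.

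For adjacent fibers, I would use the parallel transport $T \colon \calH_x \to \calH_y$, which by cocompactness of the $G$-action on $\Graph{}$ is $\MitraProjK$-bilipschitz. Then $T(z_1)$ lies within a uniformly bounded distance of $z_2$ in $\calH_y$, and $T(\calA_x(\gamma))$ is a $\MitraProjK$-bilipschitz image of a geodesic in $\calH_y$. The key task is to show that $T(\calA_x(\gamma))$ and $\calA_y(\gamma)$ remain at uniformly bounded Hausdorff distance in $\calH_y$, with constants independent of $\gamma$. For this I would lift to $\calH_{\Graph{}}$ and invoke hyperbolicity of $\Gamma$: the element $\gamma$ acts loxodromically on the quasi-isometric space $\calH_{\Graph{}}$ with a well-defined quasi-axis up to bounded Hausdorff distance, and both $\calA_x(\gamma)$ and $\calA_y(\gamma)$, being fiberwise minimal translation loci for $\gamma$, fellow-travel this common quasi-axis with uniform constants. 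Combining this Hausdorff control with the $1$-Lipschitz in-fiber projection from the previous step yields the desired uniform bound on $d_\gamma(\closest{\gamma}(z_1),\closest{\gamma}(z_2))$.

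The main obstacle will be tracking constants carefully in the adjacent-fiber step so that they depend only on $\Gamma$ through its hyperbolicity constant $\delta$, its generating set, and $\MitraProjK$, and not on the particular $\gamma$. This is the content of Mitra's argument in Section 3 of \cite{Mitra.1998}, and a careful write-up amounts to accounting for how the $\MitraProjK$-bilipschitz fiber distortion and the $\delta$-thin triangle condition of $\calH_{\Graph{}}$ combine to produce the final constants $\MitraQIK$ and $\MitraQIC$.
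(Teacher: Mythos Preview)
Your overall strategy---reduce to points at distance $\leq 1$, compare adjacent fibers via parallel transport---matches the paper's. The gap is in your adjacent-fiber step. You propose to control the Hausdorff distance between $T(\calA_x(\gamma))$ and $\calA_y(\gamma)$ by invoking hyperbolicity of $\Gamma$, arguing that both fiber axes ``fellow-travel this common quasi-axis with uniform constants.'' This is not justified and is in fact false in general: the fibers $\calH_x$ are \emph{not} uniformly quasiconvex in $\calH_{\Graph{}}$ (since $\pi_1(S)$ is an infinite normal subgroup, hence not quasiconvex in $\Gamma$), so a fiberwise geodesic $\calA_x(\gamma)$ need not be a uniform quasigeodesic in $\calH_{\Graph{}}$, and there is no reason it should stay uniformly close to the global quasi-axis of $\gamma$. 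Indeed, establishing that even the \emph{distinguished} fiber axis $\calA_{x_\gamma}(\gamma)$ is a uniform quasigeodesic is the content of Theorem~\ref{T:generalbundlewidth3}, which takes the rest of the paper and uses the present lemma as input---so your argument would be circular.

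The paper's argument never leaves the fiber: since parallel transport $\mathfrak{f}_x$ is $\pi_1(S)$--equivariant and $\MitraProjK^r$--biLipschitz, the image $\mathfrak{f}_x(\calA_y(\gamma))$ is a $\gamma$--invariant $(\MitraProjK^r,0)$--quasigeodesic in $\calH_x \cong \HH^2$, hence (by the Morse lemma in $\HH^2$, with $r$ chosen small enough that the stability constant is $<1$) within Hausdorff distance $1$ of $\calA_x(\gamma)$. The rest is a thin-triangle computation in $\HH^2$ showing that $\closest{\gamma}\mathfrak{f}_x(z)$ and $\mathfrak{f}_x\closest{\gamma}(z)$ are uniformly close. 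The crucial point is that only the hyperbolicity constant $\delta_h$ of $\HH^2$ and the transport constant $\MitraProjK$ enter; hyperbolicity of $\Gamma$ is not used at all. A secondary issue you gloss over: you must show closeness in the metric $d_\gamma$ (the path metric on $\calN_1(\calA(\gamma))$), not merely in the ambient metric, and this is exactly why the paper takes $r$ small enough to force parallel transport lines to stay inside $\calN_1(\calA(\gamma))$.
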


\begin{proof}
We begin with a few observations about the metric $d_\gamma$.  For any $0 < r < 1$ and $x$ in $\Graph{}$, consider the $r$--neighborhood of the fiber over $x$ in $\Graph{}$, $\calN_r(\calH_x) = \pi^{-1}(B(x,r))$.
Because $r < 1$, $B(x,r)$ is a tree in $\Graph{}$, and so there is a unique parallel transport to the fiber $\calH_x$ for every point in $\calN_r(\calH_x)$.  We denote this map
\[ 
	\mathfrak{f}_x\co\calN_r(\calH_x) \to \calH_x.
\]
The map $\mathfrak{f}_x$ is $\MitraProjK^r$--Lipschitz and $\MitraProjK^r$--biLipschitz when restricted to any fiber $\calH_y$, for $y$ in $B(x,r)$.

Choose $0 < r  < 1$ so that the stability constant (see Theorem III.H.1.7 of \cite{BridsonHaefliger}) for $(\MitraProjK^r,0)$--quasigeodesics in $\HH^2$ is less than $1$.   
For any $x,y$ in $\Graph{}$ with $d(x,y) \leq r$, it follows that the parallel transport line from $z$ in $\calA_y(\gamma)$ to $\mathfrak{f}_x(z)$ in $\calH_x$ is contained in $\calN_1(\calA(\gamma))$ and hence
	\[ 
		d_\gamma(z,\mathfrak{f}_x(z)) = d(z,\mathfrak{f}_x(z)) = d(x,y) \leq r .
	\]

Let $\delta_h$ denote the hyperbolicity constant for $\HH^2$.
\begin{claim}  
Given any two points $w,z$ in $\calH_{\Graph{}}$ with $d(w,z) \leq r$, we have
	\[ 
		d_\gamma(\closest{\gamma}(w),\closest{\gamma}(z)) \leq \MitraProjK^r r +  2(1+\MitraProjK^r \delta_h)  +  r.
	\]
\end{claim}
\begin{proof}[Proof of claim.]
Let $w,z$ in $\Graph{}$ be any two points with $d(w,z) \leq r$ and let $x = \pi(w)$ and $y = \pi(z)$ so that $d(x,y) \leq r$.

Recall that for any $c \geq \delta_h$ and any geodesic triangle $\triangle \subset \HH^2$, the set of points within a distance $c$ of all three sides is nonempty and has diameter at most $2c$.  
The closest point projection of one vertex of $\triangle$ to the opposite side is such a point.

Inside $\calH_y$, the point $\closest{\gamma}(z)$ is within $\delta_h$ of all three sides of the geodesic triangle $\triangle$ having vertex $z$ and opposite side $\calA_y(\gamma)$.    
It follows that inside $\calH_x$, the point $\mathfrak{f}_x\closest{\gamma}(z)$ has distance at most $\MitraProjK^r \delta_h$ from all three sides of the $(\MitraProjK^r,0)$--quasigeodesic triangle $\mathfrak{f}_x(\triangle)$.  Because the sides of this are within a distance $1$ of the geodesics with the same endpoints, it follows that $\mathfrak{f}_x\closest{\gamma}(z)$ is within a distance $1+ \MitraProjK^r\delta_h$ of all three sides of the geodesic triangle defined by $\mathfrak{f}_x(z)$ and $\calA_x(\gamma)$.  Since $\closest{\gamma}\mathfrak{f}_x(z)$ has distance at most $\delta_h < 1 + \MitraProjK^r\delta_h$ from each of these sides, it follows that
	\[ 
		d_x(\closest{\gamma}\mathfrak{f}_x(z),\mathfrak{f}_x\closest{\gamma}(z)) \leq 2(1 + \MitraProjK^r \delta_h). 
	\]
Moreover, the path exhibiting this distance bound lies entirely inside $\calH_x$, and the geodesic in $\calH_x$ between these points lies within a distance $1$ of $\calA_x(\gamma)$.  In particular, it follows that
	\[ 
		d_\gamma(\closest{\gamma}\mathfrak{f}_x(z),\mathfrak{f}_x\closest{\gamma}(z)) 
		\leq d_x(\closest{\gamma}\mathfrak{f}_x(z),\mathfrak{f}_x\closest{\gamma}(z)) 
		\leq 2(1 + \MitraProjK^r\delta_h). 
	\]

Applying the triangle inequality proves the claim, since
\begin{align}
	d_\gamma(\closest{\gamma}(w),\closest{\gamma}(z)) 
	& \leq  d_\gamma(\closest{\gamma}(w),\closest{\gamma}\mathfrak{f}_x(z)) + d_\gamma(\closest{\gamma} \mathfrak{f}_x(z),\mathfrak{f}_x\closest{\gamma}(z)) \notag \\
	& \quad \quad + d_\gamma(\mathfrak{f}_x\closest{\gamma}(z),\closest{\gamma}(z))\\
	& \leq d_x(w,\mathfrak{f}_x(z)) +  2(1+\MitraProjK^r\delta_h) + r\\
	& \leq \MitraProjK^r d(w,z) + 2(1+ \MitraProjK^r\delta_h) + r \label{EQ:TransportLipschitzInequality} \\
	& \leq \MitraProjK^r r +  2(1+\MitraProjK^r\delta_h)  +  r.
\end{align}
In inequality \eqref{EQ:TransportLipschitzInequality}, we have used the fact that $\mathfrak{f}_x$ is $\MitraProjK^r$--Lipschitz.
\end{proof}

From the claim we see that $\closest{\gamma}$ is $(\MitraQIK, \MitraQIC)$--coarsely Lipschitz, where $\MitraQIK = \MitraProjK^r  +  2(1+\MitraProjK^r\delta_h)/r  +  1$ and $\MitraQIC = \MitraProjK^r r +  2(1+\MitraProjK^r\delta_h)  +  r$.
Since the inclusion of $\calA(\gamma)$ into $\calH_{\Graph{}}$ is $1$--Lipschitz, it follows that $\calA(\gamma)$ is $(\MitraQIK, \MitraQIC)$--quasiisometrically embedded.
\end{proof}

%%%%%%%%%%%%%%%%%%%%%%
\subsection{Quasiisometric sections} \label{S:qisection}
%%%%%%%%%%%%%%%%%%%%%%

Let $E$ and $B$ be metric spaces and let $\pi\co E \to B$ be a $1$--Lipschitz map.
By a \textit{$(k,c)$--quasiisometric section} (or just \textit{$(k,c)$--section}) of $\pi\co E \to B$ we mean a subset $\Sigma \subset E$ that is the image of a $(k,c)$--coarsely Lipschitz map $\sigma\co B \to E$  with $\pi \circ \sigma = id_B$.   
Since $\pi$ is $1$--Lipschitz, the map $\sigma$ is a $(k,c)$--quasiisometric embedding.  
In fact,
	\[ 
		d(x,y)	 = d\big(\pi \sigma(x),\pi \sigma(y)\big) 
				\leq d\big(\sigma(x),\sigma(y)\big) 
				\leq k d(x,y) + c.
	\]

Mosher's Quasiisometric Section Lemma \cite{Mosher.1996} says that if $1 \to \pi_1(S) \to \Gamma \to G \to 1$ is hyperbolic, then there is a $(\MosherQISectionK,\MosherQISectionC)$--section of $\pi\co\Graph{\Gamma} \to \Graph{}$ for some $\MosherQISectionK$ and $\MosherQISectionC$.
From this we obtain a $(\MosherQISectionK,\MosherQISectionC)$--section $\Sigma$ of $\calH_{\Graph{}} \to \Graph{}$ after enlarging $\MosherQISectionK$ and $\MosherQISectionC$.
Using the fact that $\pi_1(S) < \Gamma$ acts cocompactly on the fibers, and by taking $\MosherQISectionC$ even larger, it follows that for any point $z$ in $\calH_{\Graph{}}$ there is a $(\MosherQISectionK,\MosherQISectionC)$--section $\Sigma$ for $\calH_{\Graph{}} \to \Graph{}$ containing $z$; see also \cite{Mj.Sardar.2012}. 

Given a $(\MosherQISectionK,\MosherQISectionC)$--section $\Sigma$ of $\calH_{\Graph{}} \to \Graph{}$, we have that $\closest{\gamma}(\Sigma)$ is a $(\OurQISectionK,\OurQISectionC)$--section for $\OurQISectionK  = \MosherQISectionK \MitraQIK$ and $\OurQISectionC  = \MitraQIK \MosherQISectionC + \MitraQIC$,  by Lemma \ref{L:qiladder}.  
We therefore have the following result of \cite{Mj.Sardar.2012}.

\begin{lemma}[Mj--Sardar \cite{Mj.Sardar.2012}]\label{L:qisectioninladder}
Given a hyperbolic sequence $1 \to \pi_1(S) \to \Gamma \to G \to 1$ with associated hyperbolic plane bundle $\HH^2 \to \calH_{\Graph{}} \to \Graph{}$, there are $\OurQISectionK $ and $\OurQISectionC $ with the following property.  For all $\gamma$ in $\pi_1(S)$, all $x$ in $\Graph{}$, and all $z$ in $\calA_x(\gamma)$ there exists a $(\OurQISectionK ,\OurQISectionC )$--section $\Sigma $ of $\calH_{\Graph{}} \to \Graph{}$ with $\Sigma \subset \calA(\gamma)$ and $\Sigma \cap \calH_x = \{z \}$. 
\qed
\end{lemma}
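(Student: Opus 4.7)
The plan is to build the section in three steps, combining Mosher's Quasiisometric Section Lemma with the fiberwise projection $\closest{\gamma}$ from Lemma \ref{L:qiladder}, and using the $\pi_1(S)$--action on the fibers to arrange that the section passes through the prescribed point $z$.

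First, I would invoke Mosher's Quasiisometric Section Lemma \cite{Mosher.1996} to obtain a $(\MosherQISectionK,\MosherQISectionC)$--section $\sigma_0\co \Graph{} \to \Graph{\Gamma}$ of $\pi\co \Graph{\Gamma} \to \Graph{}$. Post-composing with the fiber-preserving quasiisometry $\Graph{\Gamma} \to \calH_{\Graph{}}$ (and enlarging $\MosherQISectionK,\MosherQISectionC$ if necessary) gives a $(\MosherQISectionK,\MosherQISectionC)$--section $\sigma_1\co \Graph{} \to \calH_{\Graph{}}$. Next, I would use cocompactness of the $\pi_1(S)$--action on the fiber $\calH_x \cong \HH^2$: there is a constant $D$, independent of $x$ and $z$, such that the $\pi_1(S)$--orbit of $\sigma_1(x)$ is $D$--dense in $\calH_x$. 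Choose $g$ in $\pi_1(S)$ with $d(g\cdot \sigma_1(x),z) \leq D$ and replace $\sigma_1$ by $g \cdot \sigma_1$; this is still a section with the same constants, since $\pi_1(S)$ acts by fiber-preserving isometries of $\calH_{\Graph{}}$. Finally, I would modify $g\cdot\sigma_1$ at the single point $x$, setting its value there equal to $z$. The triangle inequality shows that this modification changes the section constants by at most $2D$, yielding a $(\MosherQISectionK, \MosherQISectionC + 2D)$--section $\Sigma$ of $\calH_{\Graph{}} \to \Graph{}$ with $\Sigma \cap \calH_x = \{z\}$.

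At this stage, $\Sigma$ need not lie in $\calA(\gamma)$, so I would apply the fiberwise closest-point projection $\closest{\gamma}\co \calH_{\Graph{}} \to \calA(\gamma)$ of Lemma \ref{L:qiladder}, which is $(\MitraQIK,\MitraQIC)$--coarsely Lipschitz. The image $\closest{\gamma}(\Sigma)$ is again a section of $\calH_{\Graph{}} \to \Graph{}$, now with image contained in $\calA(\gamma)$. Composing coarse Lipschitz constants, $\closest{\gamma}(\Sigma)$ is a $(\OurQISectionK,\OurQISectionC)$--section for $\OurQISectionK = \MosherQISectionK\MitraQIK$ and $\OurQISectionC = \MitraQIK(\MosherQISectionC + 2D) + \MitraQIC$. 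Crucially, because $z$ already lies on $\calA_x(\gamma)$, the fiberwise projection fixes it: $\closest{\gamma}(z) = z$. Hence $\closest{\gamma}(\Sigma) \cap \calH_x = \{z\}$, as required.

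The only real obstacle is reconciling two competing demands on the section: it must pass exactly through the given point $z$, and its image must lie in $\calA(\gamma)$. The key observation that resolves this tension is the one just highlighted — since $z$ is already on the axis $\calA_x(\gamma)$, fixing the section to pass through $z$ before projecting is compatible with the projection step, because $\closest{\gamma}$ fixes $z$. All constants absorbed along the way (the cocompactness constant $D$, the quasiisometry distortion from $\Graph{\Gamma}$ to $\calH_{\Graph{}}$, and the Mitra projection constants) depend only on the hyperbolic sequence and not on $\gamma$, $x$, or $z$, so $\OurQISectionK$ and $\OurQISectionC$ are uniform as claimed.
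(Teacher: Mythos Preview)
Your proof is correct and follows essentially the same approach as the paper: start from Mosher's quasiisometric section, use cocompactness of the $\pi_1(S)$--action on fibers to arrange (after enlarging $\MosherQISectionC$) a section through the prescribed point $z$, and then apply Mitra's fiberwise projection $\closest{\gamma}$ from Lemma~\ref{L:qiladder}. You are more explicit than the paper about the single-point modification and about why $\closest{\gamma}$ fixes $z$ (since $z \in \calA_x(\gamma)$), but the argument and constants agree.
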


A section $\Sigma$ as in this lemma will be called a \textit{$(\OurQISectionK ,\OurQISectionC )$--section for $\gamma$ (though $z$)}.
In the sequel we are interested in collections of these.  
The leaf $\calA_x(\gamma)$ is a line oriented by the action of $\gamma$, and so possesses a well--defined order.
We say that a collection $\{\Sigma_n \}_{n \in \ZZ}$ of $(\OurQISectionK ,\OurQISectionC )$--sections for $\gamma$ are \textit{linearly ordered over $x$} if the assignment $n \mapsto \Sigma_n \cap \calA_x(\gamma)$ is order preserving.

\begin{theorem} \label{T:generalbundlewidth4}
Given a hyperbolic sequence $1 \to \pi_1(S) \to \Gamma \to G \to 1$ with associated hyperbolic plane bundle $\HH^2 \to \calH_{\Graph{}} \to \Graph{}$, there are $\GapOverShortest > \GapOverRelShortest > 0$ with the following property.  
If $\gamma$ in $\pi_1(S)$ is nonfilling and $\{ \Sigma_n \}_{n \in \ZZ}$ is a collection of $(\OurQISectionK ,\OurQISectionC )$--sections for $\gamma$ such that
	\[
		\{\Sigma_n\}_{n \in \ZZ} \mbox{ is linearly ordered over } x_\gamma \mbox{ and }d_{x_\gamma}(\Sigma_n,\Sigma_{n+1}) = \GapOverShortest, 
	\]
then, for \textbf{every} $x$ in $\Graph{}$,
	\[ 
		\{\Sigma_n\}_{n \in \ZZ} \mbox{ is linearly ordered over } x \mbox{ and } d_x(\Sigma_n,\Sigma_{n+1}) \geq \GapOverRelShortest. 
	\]
\end{theorem}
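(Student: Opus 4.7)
The plan is to reduce the combinatorial question in $\Graph{}$ to the singular $\Sol$ picture over a Teichm\"uller geodesic through $x$ and $x_\gamma$, where Proposition \ref{P:nonfillinggrowth} can be invoked directly. Given $x \in \Graph{}$, I would construct a biinfinite uniform quasi-geodesic $g \subset \Graph{}$ through $x$ and $x_\gamma$. Since the hyperbolicity of $\Gamma$ forces $G$ to be convex cocompact in $\Mod(S)$ by \cite{Farb.Mosher.2002}, the equivariant embedding $\Graph{} \hookrightarrow \Teich(S)$ sends $g$ uniformly close to a Teichm\"uller geodesic $\tau$; the Farb--Mosher fiberwise quasi-isometry identifies $\calH_g$ with $\calH_\tau$ uniformly, and Minsky's theorem replaces the fiberwise hyperbolic metric by the singular $\Sol$ metric with uniform distortion. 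Under these identifications $x_\gamma$ lies within uniformly bounded distance of the balance time $\tau(0)$ for $\gamma$, and the arc of $\calA_{x_\gamma}(\gamma)$ between $\Sigma_n(x_\gamma)$ and $\Sigma_{n+1}(x_\gamma)$ corresponds to a segment of $\calA_{\tau(0)}(\gamma)^{\Sol}$ whose $\Sol$-length is comparable to $\GapOverShortest$.

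I would then choose $\GapOverShortest$ large enough that, after absorbing the uniform hyperbolic-to-$\Sol$ distortion, this $\Sol$-segment exceeds the ``sufficient length'' threshold in Proposition \ref{P:nonfillinggrowth}. The proposition supplies a uniform $\FirstT$ such that for $|t| > \FirstT$ the $\Sol$-length of the horizontal arc in $\calA_{\tau(t)}(\gamma)^{\Sol}$ over the same base interval grows exponentially in $|t|$, and Lipschitz control of the $\Sol$ metric on the compact interval $[-\FirstT,\FirstT]$ bounds this length below by a uniform positive constant there. Pulling back through Minsky and Farb--Mosher gives a uniform positive lower bound on the hyperbolic distance between the transported endpoints in $\calA_{\tau(t)}(\gamma)$, which blows up as $|t| \to \infty$. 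The actual section points $\Sigma_n(x)$ and $\Sigma_{n+1}(x)$ differ from these transported endpoints by at most the quasi-isometric drift $\OurQISectionK\, d(x,x_\gamma) + \OurQISectionC$ in $d_\gamma$. In the far regime $|t| > \FirstT$ this exponential growth dominates the linear drift; in the near regime $|t| \le \FirstT$, a direct triangle inequality
\[
d_x(\Sigma_n,\Sigma_{n+1}) \;\geq\; \GapOverShortest - 2\OurQISectionK|t| - 2\OurQISectionC
\]
stays above a uniform positive constant provided $\GapOverShortest$ is chosen large in terms of $\FirstT$, $\OurQISectionK$, and $\OurQISectionC$. Taking $\GapOverRelShortest$ to be the resulting infimum yields the desired separation.

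Linear ordering then follows from the separation: viewing the signed difference $\Sigma_{n+1}(y) - \Sigma_n(y)$ along the oriented line $\calA_y(\gamma) \cong \RR$, this quantity is $+\GapOverShortest$ at $y = x_\gamma$, and its absolute value changes by at most $2(\OurQISectionK + \OurQISectionC)$ across a single edge of $\Graph{}$ by the quasi-isometric section property. If $\GapOverRelShortest$ is chosen to exceed $\OurQISectionK + \OurQISectionC$, the signed gap cannot cross zero, so the order is preserved along any edge path from $x_\gamma$ to $x$.

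The main obstacle will be the bookkeeping of uniform constants through the chain of reductions from $\Graph{\Gamma}$ to the $\Sol$ axis bundle, together with translating the ``sufficient length'' hypothesis of Proposition \ref{P:nonfillinggrowth} into an effective lower bound on $\GapOverShortest$. A secondary subtlety is that $x_\gamma$ and $\tau(0)$ only coincide up to uniform bounded distance, and the $\Sol$ axis of $\gamma$ only approximates the hyperbolic axis, so the endpoint identifications must be carried out with enough slack to absorb these ambiguities without destroying the uniform separation $\GapOverRelShortest$.
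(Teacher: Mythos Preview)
Your overall reduction to the singular \textsc{Sol} picture over a Teichm\"uller geodesic through $x$ and $x_\gamma$ is exactly the paper's strategy, and your identification of $x_\gamma$ with the balance time up to bounded error is correct. The gap is in the step where you compare the section points $\Sigma_n(x)$ to the transported endpoints. You assert that these differ by at most $\OurQISectionK\, d(x,x_\gamma) + \OurQISectionC$ \emph{in $d_\gamma$}, which is true, but then use this in a triangle inequality for the \emph{fiber} distance $d_x$. These are not comparable in the direction you need: the fibers $\calA_x(\gamma)$ are exponentially distorted in the ambient metric (this is the whole point of the flaring), so a linear bound on $d_\gamma(\Sigma_n(x),\text{tp}_n)$ yields only an exponential bound on $d_x(\Sigma_n(x),\text{tp}_n)$, with exponent proportional to $\OurQISectionK\,|t|$. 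Since $\OurQISectionK > 1$, this drift can outpace the $e^{|t|}$ growth of the transported segment from Proposition~\ref{P:nonfillinggrowth}, and your far-regime inequality collapses. The same conflation invalidates the near-regime triangle inequality as written (though that case can be repaired crudely, since $|t|$ is bounded).

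What the paper does instead is prove Proposition~\ref{P:qisectionsingsolv}: it first shows that $\calA^{\Sol}(\gamma)$ is itself a hyperbolic metric space in which the isometric sections $\isosection_n$ are genuine geodesics and the $\Sigma_n$ are uniform quasigeodesics. A thin-triangle argument (using quasiconvexity of the slabs $\calA^{\Sol}_{[-a,a]}$, Corollary~\ref{C:quasiconvexfiberinSOL}) then forces each $\Sigma_n$ to lie in a uniformly bounded \emph{ambient} neighborhood of a single $\isosection_m$, and Corollary~\ref{C:neighborhoodofsectionsinSOL} converts this into the statement that $\Sigma_n$ is trapped between $\isosection_m$ and $\isosection_{m+B}$ for a uniform $B$. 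This sidesteps any direct fiberwise drift estimate. Your argument is missing precisely this hyperbolicity-of-the-ladder step; without it, there is no mechanism preventing a $(\OurQISectionK,\OurQISectionC)$-section from wandering across many isometric sections while staying coarsely Lipschitz in $d_\gamma$.
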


\subsubsection{Proof of Theorem \ref{T:generalbundlewidth3}\label{S:QIAxisFromSections} assuming Theorem \ref{T:generalbundlewidth4}.}
\begin{proof}[Proof of Theorem \ref{T:generalbundlewidth3} assuming Theorem \ref{T:generalbundlewidth4}.] Let $\gamma$ be nonfilling.  
By Lemma \ref{L:qisectioninladder}, there are $(\OurQISectionK ,\OurQISectionC )$--sections $\{\Sigma_n\}_{n \in \ZZ}$ for $\gamma$ as in Theorem \ref{T:generalbundlewidth4}.

Let $\calR_n$ denote the open region in $\calA(\gamma)$ between $\Sigma_n$ and $\Sigma_{n+1}$.
By the conclusion of Theorem \ref{T:generalbundlewidth4}, each $\calR_n$ is a union of intervals, one in each fiber.
According to Theorem 3.2 of \cite{Mj.Sardar.2012}, there are constants $K'$ and $C' $ depending only the bundle $\HH^2 \to \calH_{\Graph{}} \to \Graph{}$ such that the fiberwise closest point projection 
	\[
		\closest{n} \co \calH_{\Graph{}} \to \calR_n
	\]
is $(K',C')$--coarsely Lipschitz map (where $\calR_n$ is given the metric inherited from the path metric on a sufficiently large neighborhood in $\calH_{\Graph{}}$).
Theorem 3.2 of \cite{Mj.Sardar.2012} is attributed to Mitra \cite{Mitra.1998}, as it is a direct translation of arguments there, much like the proof of Lemma \ref{L:qiladder}.

Define
	\[ 
		\eta_\gamma\co\calA(\gamma) \to  \calA_{x_\gamma}(\gamma) 
	\]
by $\eta_\gamma(\calR_n) = \eta_\gamma(\Sigma_n) = \Sigma_n \cap \calA_{x_\gamma}(\gamma)$. 
We will show that $\eta_\gamma$ is coarsely Lipschitz.

\begin{claimplain}
There is a $\FewShortSectionsBound> 0$ depending only on the bundle $\HH^2 \to \calH_{\Graph{}} \to \Graph{}$ such that if $w$ is in $\calR_m \cup \Sigma_m$ and $z$ is in $\calR_n \cup \Sigma_n$ with $d(w,z) \leq 1$, then $|m-n| \leq \FewShortSectionsBound$.
\end{claimplain}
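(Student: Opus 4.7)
The plan is to transfer the problem into a single fiber and apply the uniform lower bound on consecutive-section separation from Theorem \ref{T:generalbundlewidth4}. Set $x = \pi(w)$ and $y = \pi(z)$. Since $\pi$ is $1$-Lipschitz, $d(x,y) \leq 1$, so $y \in B(x,1)$, a tree in $\Graph{}$. Thus parallel transport $\mathfrak{f}_x \co \calH_y \to \calH_x$ along the geodesic from $y$ to $x$ is well-defined and $\MitraProjK$-bilipschitz. First I would use the local product structure of the piecewise Riemannian metric on $\calH_{\Graph{}}$ to bound $d_x(w, \mathfrak{f}_x(z)) \leq C_0$ for some $C_0 = C_0(\MitraProjK)$: a path of length at most $1$ in $\calH_{\Graph{}}$ has vertical (fiber-direction) deviation of length at most $1$, and this is amplified by at most a factor of $\MitraProjK$ upon parallel transport back to $\calH_x$. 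Since closest-point projection in $\calH_x \cong \HH^2$ is $1$-Lipschitz and $w \in \calA_x(\gamma)$, the point $u := \closest{\gamma}(\mathfrak{f}_x(z)) \in \calA_x(\gamma)$ satisfies $d_x(w, u) \leq C_0$.

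Next I would run the same argument with $z$ replaced by the section points $\Sigma_k \cap \calA_y(\gamma)$ for $k \in \{n, n+1\}$. By the $(\OurQISectionK, \OurQISectionC)$-section property, these points are within $\calH_{\Graph{}}$-distance $\OurQISectionK + \OurQISectionC$ of $\Sigma_k \cap \calA_x(\gamma)$, and invoking Lemma \ref{L:qiladder} together with the same parallel-transport/local-product estimate, applied now to paths of length at most $\OurQISectionK + \OurQISectionC + 1$, yields a constant $C_1 = C_1(\MitraProjK, \OurQISectionK, \OurQISectionC)$ such that $\closest{\gamma}(\mathfrak{f}_x(\Sigma_k \cap \calA_y(\gamma)))$ lies within fiber distance $C_1$ of $\Sigma_k \cap \calA_x(\gamma)$. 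Because $z$ lies on the arc of $\calA_y(\gamma)$ between $\Sigma_n \cap \calA_y(\gamma)$ and $\Sigma_{n+1} \cap \calA_y(\gamma)$, and $\mathfrak{f}_x$ followed by $\closest{\gamma}$ preserves this betweenness up to a bounded error, the image $u$ lies in $\calA_x(\gamma)$ within distance $C_1$ of the arc from $\Sigma_n \cap \calA_x(\gamma)$ to $\Sigma_{n+1} \cap \calA_x(\gamma)$. Meanwhile $w$ itself lies on the arc of $\calA_x(\gamma)$ between $\Sigma_m \cap \calA_x(\gamma)$ and $\Sigma_{m+1} \cap \calA_x(\gamma)$. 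Applying Theorem \ref{T:generalbundlewidth4}, consecutive sections in the totally ordered line $\calA_x(\gamma) \cong \RR$ are separated by at least $\GapOverRelShortest$; combining with $d_x(w,u) \leq C_0$, the triangle inequality forces $|m-n| \leq 2 + (C_0 + 2C_1)/\GapOverRelShortest$, which we take as $\FewShortSectionsBound$.

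The main obstacle is the careful justification of the fiber-distance bounds in Step 1 (and its iteration in Step 2). This relies on the fact that the piecewise Riemannian metric on $\calH_{\Graph{}}$ locally splits as a product of base and fiber, so that the vertical component of any ambient path is controlled by its total length, together with the uniform $\MitraProjK^{d(x,y)}$-Lipschitz bound on parallel transport that comes from the cocompact action of $G$ on $\Graph{}$. Once these estimates are assembled, the counting argument via Theorem \ref{T:generalbundlewidth4} is essentially formal.
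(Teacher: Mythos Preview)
Your argument is correct and reaches the same destination as the paper---reducing to a single fiber and counting section gaps via the $\GapOverRelShortest$ lower bound from Theorem \ref{T:generalbundlewidth4}---but the route differs. The paper does not use parallel transport or a betweenness argument at all: it simply takes a point $z' \in \calH_{\pi(w)}$ nearest to $z$, then applies Theorem 3.2 of \cite{Mj.Sardar.2012}, which says the \emph{fiberwise closest-point projection $\closest{n}$ to the region $\calR_n$} (not just to the axis) is uniformly coarsely Lipschitz. This lands $z$ at a point $z'' \in \calR_n \cap \calH_{\pi(w)}$ with $d(w,z'') \leq 1 + K' + C'$, and uniform properness of the fibers together with Theorem \ref{T:generalbundlewidth4} finishes immediately.

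Your approach trades the Mj--Sardar black box for more elementary ingredients (parallel transport, $1$-Lipschitz projection in $\HH^2$, and coarse monotonicity of the induced quasiisometry $\calA_y(\gamma) \to \calA_x(\gamma)$). This is legitimate, and what it buys is that you never need the region-projection result---only Mitra's axis-projection Lemma \ref{L:qiladder}. The cost is that the ``betweenness up to bounded error'' step, while true, needs the observation that $\closest{\gamma} \circ \mathfrak{f}_x|_{\calA_y(\gamma)}$ is a uniform quasiisometry between copies of $\RR$ (because $\mathfrak{f}_x$ is $\pi_1(S)$-equivariant, hence carries the $\gamma$-axis to a $\gamma$-invariant quasigeodesic), and such maps are coarsely monotone; you should state this explicitly. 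Also, your claim that $B(x,1)$ is a tree is not needed and not always true---what you actually use is that parallel transport along the edge from $y$ to $x$ is well-defined and $\MitraProjK$-Lipschitz, which suffices.
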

\begin{proof}[Proof of claim]
Assume that $m \leq n$.

First assume that $w$ and $z$ are in the same fiber $\calA_{\pi(w)}(\gamma) = \calA_{\pi(z)}(\gamma)$.
By Theorem \ref{T:generalbundlewidth4}, we have $d_{\pi(w)}(w,z) \geq \GapOverRelShortest  (n-m)$.
Now, the fibers of $\calH_{\Graph{}}$ (in which the fibers of $\calA(\gamma)$ are geodesic) are uniformly proper, and so there is a positive $\UniformPropernessConstantA$ depending only on $\HH^2 \to \calH_{\Graph{}} \to \Graph{}$ such that $d(w,z) \geq \UniformPropernessConstantA \, d_{\pi(w)}(w,z)$.
So 
	\[
		1 
		\geq d(w,z) 
		\geq \UniformPropernessConstantA \GapOverRelShortest  \, (n-m-1),
	\]
and we are done in this case with $B_1 = 1/ \UniformPropernessConstantA \GapOverRelShortest + 1$.

If $w$ and $z$ are in different fibers, we argue as follows.
Let $z'$ be a point in the fiber $\calH_{\pi(w)}$ with 
	\[
		d(z,z') 
		= d\left(z,\calH_{\pi(w)}\right) 
		\leq d(z,w) \leq 1.
	\]
We have $\closest{n}(z) = z$ and $\closest{n}(z') = z''$ for some $z''$ in $\calR_n \cap \calH_{\pi(w)}$.
Since $\closest{n}$ is $(K',C')$--coarsely Lipschitz, uniform properness gives us
	\begin{align*}
		1 + K' + C' 
			& \geq 1 + K' d(z,z') + C' 
			\\
			& \geq d(w,z) + d(z,z'')
			\\
			& \geq d(w,z'') 
			\\
			& \geq \UniformPropernessConstantA \GapOverRelShortest  \, (n-m-1),
	\end{align*}
and the proof is complete with $\FewShortSectionsBound = (1 + K'  + C')/\UniformPropernessConstantA \GapOverRelShortest + 1$.
\end{proof}

It follows from the claim that
	\[
		 d_{x_\gamma}(\eta_\gamma(z),\eta_\gamma(w)) \leq \FewShortSectionsBound  \GapOverShortest  
	\]
if $d(z,w) \leq 1$, and so $\eta_\gamma$ is $(\FewShortSectionsBound \GapOverShortest  ,\FewShortSectionsBound \GapOverShortest  )$--coarsely Lipschitz.
It follows that $\calA_{x_\gamma}(\gamma)$ is $(\FewShortSectionsBound \GapOverShortest  ,\FewShortSectionsBound \GapOverShortest  )$--quasiisometrically embedded in $\calA(\gamma)$, and hence $(K,C)$--quasiisomet-rically embedded in $\calH_{\Graph{}}$ for $K = \MitraQIK \FewShortSectionsBound \GapOverShortest  $ and $C = \MitraQIK \FewShortSectionsBound \GapOverShortest   + \MitraQIC$, by Lemma \ref{L:qiladder}.

This proves Theorem \ref{T:generalbundlewidth3}.
\end{proof}

For $x$ sufficiently far from $x_\gamma$, the distances $d_x(\Sigma_n,\Sigma_{n+1})$ are in fact much larger than the estimate in Theorem \ref{T:generalbundlewidth4}.  
As a function of $d(x,x_\gamma)$, they are exponentially larger than the distances $d_{x_\gamma}\big(\Sigma_n \cap \calA_{x_\gamma}(\gamma),\Sigma_{n+1} \cap \calA_{x_\gamma}(\gamma)\big)$, due to \textit{flaring}.  
For nonfilling $\gamma$, the exponential growth will kick in outside a ball about $x_\gamma$ of a uniformly bounded radius.  

The rest of the paper is devoted to the proof of Theorem \ref{T:generalbundlewidth4}, which requires a study of quadratic differentials, Teichm\"uller geodesics, and singular \textsc{Sol}  metrics, taken up in the next section.

%%%%%%%%%%%%%%%%%%%
\subsection{Quadratic differentials and flat metrics} \label{S:qd and flat}
%%%%%%%%%%%%%%%%%%%

We refer the reader to \cite{Strebel} for a detailed treatment of quadratic differentials and their associated flat metrics.

Given a complex structure on $S$, a unit--norm holomorphic quadratic differential $q$ on $S$ both determines and is determined by a nonpositively curved Euclidean cone metric on $S$ together with a pair of orthogonal singular foliations with geodesic leaves (called the \textit{vertical} and \textit{horizontal} foliations).  
Given $q$ and a nonsingular point $p$, there is a \textit{preferred coordinate} $\zeta = x+iy$ which carries a neighborhood of $p$ isometrically into the plane such that the arcs of the horizontal and vertical foliations to horizontal and vertical segments, respectively.

We let $\calQ^1(S)$ denote the space of all unit--norm holomorphic quadratic differentials on $S$, which forms the unit cotangent bundle over Teichm\"uller space $\Teich(S)$.  
We let $m = m(q)$  denote the hyperbolic metric in the conformal class of a quadratic differential $q$, and write $q \mapsto m(q)$ for the map $\calQ^1(S) \to \Teich(S)$.

Let $\widetilde S \to S$ be the universal covering.  
Given $q$ in $\calQ^1(S)$, we abuse notation and continue to refer to  the pullback of $q$ and $m$ to $\widetilde S$ as $q$ and $m$, respectively.  
The identity map $id_{\widetilde S}\co \widetilde S \to \widetilde S$ is a quasiisometry with respect to $m$ and the singular flat metric for $q$.  In fact, by Proposition 2.5 of \cite{Farb.Mosher.2002} or Lemma 3.3 of \cite{Minsky.1994}, for example, we have the following lemma.
\begin{lemma}[Minsky \cite{Minsky.1994}] \label{L:FMuniformqi}
Given $r > 0$ there exist $\HypFlatQIK,\HypFlatQIC> 0$ such that if $q$ in $\calQ^1(S)$ lies over the $r$--thick part of $\Teich(S)$, then
	\[ 
		id_{\widetilde S}\co(\widetilde S, m) \to (\widetilde S, q) 
	\]
is a $(\HypFlatQIK,\HypFlatQIC)$--quasiisometry.
\qed
\end{lemma}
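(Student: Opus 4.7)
My plan is to prove this by a compactness argument based on the conformal relationship between $m$ and $|q|$. Since $m$ is the hyperbolic metric in the conformal class of $q$, in any local holomorphic coordinate $z$ on $S$ we may write $m = \rho_m^2 |dz|^2$ and $|q| = |q(z)|\,|dz|^2$, so $ds_{|q|}/ds_m = |q(z)|^{1/2}/\rho_m$. Away from the finitely many zeros of $q$ this ratio is smooth and positive; at a zero of order $k$ and in preferred coordinates it behaves like $|z|^{k/2}/\rho_m$, uniformly bounded above but tending to zero at the zero.

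The compactness setup is standard. The map $\calQ^1(S) \to \Teich(S)$ is $\Mod(S)$-equivariant, realizing $\calQ^1(S)/\Mod(S)$ as a bundle over moduli space with compact unit-sphere fibers. By Mumford's compactness theorem, the $r$-thick locus in moduli space is compact, so the preimage $\calK_r \subset \calQ^1(S)/\Mod(S)$ of the $r$-thick part is compact. Moreover, the total number of zeros of $q$ (with multiplicity) is fixed by Riemann--Roch, depending only on $\chi(S)$.

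One direction is immediate: $d_{|q|}(x,y) \le \HypFlatQIK\,d_m(x,y)$ follows from the observation that $\sup_{p \in S}(|q(z)|^{1/2}/\rho_m)(p)$ is finite on each $S$ and varies continuously on the compact space $\calK_r$, giving a uniform bound $\HypFlatQIK = \HypFlatQIK(r)$; integrating along any path proves the claim on $\widetilde{S}$.

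The main obstacle is the reverse inequality $d_m(x,y) \le \HypFlatQIK\,d_{|q|}(x,y) + \HypFlatQIC$, because $\rho_m/|q(z)|^{1/2}$ blows up at zeros of $q$. I would handle this by a perturbation argument: fix a small $\delta > 0$; on the complement of the $\delta$-neighborhood of the zero set, the ratio $\rho_m/|q(z)|^{1/2}$ is uniformly bounded on $\calK_r$ by continuity and compactness. Taking the $|q|$-geodesic from $x$ to $y$, which is a concatenation of saddle connections (each a straight segment in the flat metric), I would estimate its $m$-length piece by piece. On the bulk of each saddle connection, $m$-length is bounded linearly by $|q|$-length; near each cone-point endpoint, the $m$-length picked up inside a $\delta$-ball is at most a uniform constant, by direct local computation using $|q| = c|z|^k|dz|^2$. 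The number of cone points encountered along a $|q|$-geodesic of length $L$ is at most a linear function of $L$, since the cone points are isolated with topologically controlled density per unit $|q|$-area. Summing these estimates produces uniform constants $\HypFlatQIK$ and $\HypFlatQIC$ depending only on $r$ and $\chi(S)$.
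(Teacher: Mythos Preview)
The paper does not prove this lemma; it simply cites Proposition~2.5 of \cite{Farb.Mosher.2002} and Lemma~3.3 of \cite{Minsky.1994} and marks the statement with \qed. So there is no ``paper's proof'' to compare against, only the question of whether your argument is correct.

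Your compactness set--up and the first inequality $d_{|q|}\le \HypFlatQIK\, d_m$ are fine. The gap is in the reverse direction, at the sentence ``The number of cone points encountered along a $|q|$--geodesic of length $L$ is at most a linear function of $L$, since the cone points are isolated with topologically controlled density per unit $|q|$--area.'' Density of cone points per unit area says nothing about how many a one--dimensional geodesic meets, and in fact the naive bound via a shortest--saddle--connection length fails: within $\calK_r$ zeros can collide (a family in $\calQ^1(S)$ can move from one stratum to another while staying $r$--thick), so saddle connections can be arbitrarily short and you cannot simply divide $L$ by a uniform minimal segment length. Without this bound, your estimate becomes $d_m \le \HypFlatQIK\, d_{|q|} + C'\cdot(\text{number of cone points})$, which is useless if the second term is uncontrolled.

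The claim is nonetheless true, but needs a different justification. One clean fix: show by compactness of $\calK_r$ that the $|q|$--systole (shortest essential closed $|q|$--geodesic) is uniformly bounded below by some $\delta_r>0$; this is a continuous positive function on $\calK_r$ and requires no comparison with $m$. Now if $p_0,\dots,p_n$ are the singularities along your $|q|$--geodesic in $\widetilde S$, any $4g-3$ consecutive ones project to zeros of $q$ in $S$ with a repeat, by pigeonhole; the subarc between the two equal projections is an essential loop in $S$, hence has $|q|$--length $\ge\delta_r$. This gives $n\le (4g-3)(L/\delta_r+1)$, which is exactly the linear bound you asserted. (This is essentially the mechanism behind Lemma~\ref{L:longstraightsegment}, but note that invoking that lemma directly would be circular, since its proof uses \eqref{E:Kcomparelengths}.) With this repair, your argument goes through.
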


%%%%%%%%%%%%%%%%%%%%%%%%%%
\subsubsection{Geodesics and straight segments.}
%%%%%%%%%%%%%%%%%%%%%%%%%%

Fix $q$ in $\calQ^1(S)$.   Given $\gamma$ in $\pi_1(S)$ a (nontrivial) element we will let $\gamma_0^*$ denote the $q$--geodesic representative in $S$ and $\widetilde \gamma_0^*$ a lift of this geodesic to a biinfinite $q$--geodesic in $\widetilde S$.   The geodesic $\gamma_0^*$ should be considered a locally isometric map from a circle or interval of some length into $S$ as the geodesic is not determined by its image.

The geodesics $\gamma_0^*$ and $\widetilde \gamma_0^*$ are either Euclidean geodesics (geodesics in the complement of the singularities) or concatenations of \textit{straight segments} (Euclidean geodesic segments connecting pairs of singular points with no singular points in their interior).

We let $\| \gamma \|_q$ denote the $q$--length of $\gamma_0^*$ and $\| \gamma \|_{q,v}$ and $\| \gamma \|_{q,h}$ the vertical and horizontal lengths of $\gamma_0^*$, respectively.  These are related by
	\begin{align}\label{E:length,vertical,horizontal} 
		\frac{1}{2} ( \|\gamma\|_{q,v} + \|\gamma \|_{q,h} ) 
			& \leq \max \{ \|\gamma\|_{q,v} , \|\gamma \|_{q,h} \}  \\
			& \leq \| \gamma \|_q \\
			& \leq \|\gamma\|_{q,v} + \|\gamma \|_{q,h} \\
			& \leq 2 \max \{ \|\gamma\|_{q,v} , \|\gamma \|_{q,h} \}.
	\end{align}

More generally, given a (local) $q$--geodesic $\delta \co I \to S$ or $\delta \co I \to \widetilde S$ defined on an interval $I \subset \RR$, we let $\| \delta \|_q$, $\| \delta \|_{q,h}$, and $\| \delta \|_{q,v}$ denote the length, horizontal length, and vertical length, respectively.

We let $\| \gamma \|_m$ denote the length of the $m=m(q)$--geodesic representative.
Given $r > 0$, if $\HypFlatQIK ,\HypFlatQIC$ are as in Lemma \ref{L:FMuniformqi}, we have 
	\begin{equation} \label{E:Kcomparelengths} 
			\frac{1}{\HypFlatQIK } \| \gamma \|_q 
			\leq \| \gamma \|_m 
			\leq \HypFlatQIK  \| \gamma \|_q.
	\end{equation}
The inequality \eqref{E:Kcomparelengths} is free of the constant $\HypFlatQIC$ thanks to the fact that the length is equal to the asymptotic translation length.

More generally, given any geodesic metric $m'$ on $S$ for which the pullback to $\widetilde S$ makes $id_{\widetilde S} \co (\widetilde S,m') \to (\widetilde S,q)$ a $(\HypFlatQIKgen,\HypFlatQICgen)$--quasiisometry, then
	\begin{equation} \label{E:Kcomparelengths2}
			\frac{1}{\HypFlatQIKgen } \| \gamma \|_q 
			\leq \| \gamma \|_m 
			\leq \HypFlatQIKgen  \| \gamma \|_q.
	\end{equation}

From (\ref{E:Kcomparelengths}) we easily obtain the following.
\begin{lemma} \label{L:longstraightsegment}
For any $r > 0$, there exists $\epsilon > 0$ with the following property.  Given any $q$ in $\calQ^1(S)$ lying over the $r$--thick part of $\Teich(S)$ and any (local) $q$--geodesic segment $\delta\co [0,1] \to S$ or $\delta\co [0,1] \to \widetilde S$, there is an arc of $\delta$ of length at least $\epsilon$ containing no singularities.

\end{lemma}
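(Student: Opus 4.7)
The plan is to prove the lemma by a pigeonhole argument that bounds the number of singular points a unit-length $q$-geodesic can traverse, and then extracts a long singularity-free subarc.

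The main step is to bound the number of singularities of $q$ in a $q$-ball of radius $1$ in $\widetilde S$, uniformly in $q$ over the $r$-thick part.  By Lemma \ref{L:FMuniformqi}, such a $q$-ball is contained in an $m$-ball of radius $R=R(r)$.  The singularities of $q$ in $\widetilde S$ lie in the $\pi_1(S)$-orbits of the at most $|4g-4|=-2\chi$ singularities of $q$ on $S$; within each orbit, distinct points are pairwise at $m$-distance at least the $m$-systole, which is $\geq r$ on the $r$-thick part.  Packing disjoint $m$-balls of radius $r/2$ about the orbit points inside the $m$-ball of radius $R+r/2$ and comparing hyperbolic areas gives at most $(\cosh(R+r/2)-1)/(\cosh(r/2)-1)$ points per orbit, hence a uniform bound $N=N(r,\chi)$ on the total number of singularities in the ball.

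Given this, the lemma is immediate by pigeonhole.  A unit-speed $q$-geodesic $\delta\co[0,1]\to\widetilde S$ (or $\delta\co[0,1]\to S$, lifted to $\widetilde S$) lies in a $q$-ball of radius $1$, and hence meets at most $N$ singularities; these partition $[0,1]$ into at most $N+1$ subintervals, the longest of which has $q$-length at least $1/(N+1)$.  Trimming a small open collar at each end of the corresponding subarc produces a subarc of length at least $1/(2(N+1))$ disjoint from the singular set of $q$, so we set $\epsilon=1/(2(N+1))$.

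The substantive step is the packing estimate giving $N$; the $r$-thickness is used both to uniformize the constants in Lemma \ref{L:FMuniformqi} and to bound the $m$-systole below.  I would deliberately avoid attempting to bound the length of the shortest saddle connection of $q$ uniformly, since two zeros of $q$ can come arbitrarily close in the flat metric while $q$ remains over the $r$-thick part; what saves the argument is that only the \emph{count} of singularities along $\delta$ needs to be controlled, not the distances between consecutive ones.
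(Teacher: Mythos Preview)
Your proof is correct, but it follows a different route from the paper's.

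The paper argues by contradiction directly in $S$: if every subarc of $\delta$ of length $\epsilon = r/(\HypFlatQIK(4g-2))$ contains a singularity, then $\delta$ contains a concatenation of at least $4g-4$ straight segments of length less than $\epsilon$ joining singularities; since $q$ has at most $4g-4$ singularities on $S$, some singularity is visited twice, producing an essential loop of $q$--length less than $(4g-4)\epsilon$, hence of $m$--length less than $r$ by \eqref{E:Kcomparelengths}, contradicting $r$--thickness.  Your argument instead lifts to $\widetilde S$ and replaces the ``short essential loop'' mechanism with a hyperbolic packing estimate bounding the number of singularities in a $q$--ball of radius $1$.  Both approaches use the same two inputs (the count $-2\chi$ of singularities and the uniform comparison of $m$-- and $q$--metrics from Lemma~\ref{L:FMuniformqi}), but the paper exploits them via the pigeonhole in $S$ and the systole bound, while you exploit them via the quasiisometry of balls and an area--packing count in $\HH^2$.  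The paper's route yields the explicit constant $\epsilon = r/(\HypFlatQIK(4g-2))$, whereas your constant involves $\cosh$ of the quasiisometry constants and is less transparent; on the other hand, your argument is perhaps conceptually cleaner in that it never needs to manufacture the essential loop, and your final remark correctly identifies why a naive lower bound on saddle--connection length would fail.
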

\begin{proof}
We assume as we may that $r < 1$ and set $\epsilon = r/(\HypFlatQIK (4g-2)) < 1/(4g-2)$.

Suppose that there is a $q$--geodesic segment $\delta \co  [0,1] \to S$ such that every subsegment of length at least $\epsilon$ contains a singularity.
This segment contains a concatenation $\delta'$ of at least $4g-4$ straight segments of $q$--length less than $\epsilon$, each connecting a pair of singularities.  
Since there are at most $4g-4$ singularities of $q$, the segment $\delta'$ must visit some singularity more than once, thus forming a loop $\beta$ of $q$--length less than $(4g-4)\epsilon < r/\HypFlatQIK $.   
Except at the basepoint, this loop $\beta$ is locally geodesic, and is therefore essential.  
By \eqref{E:Kcomparelengths}, the hyperbolic length of $\beta$ is less than $\HypFlatQIK (r/\HypFlatQIK ) = r$, which contradicts the fact that $q$ lies over the $r$--thick part of $\Teich(S)$.  

For $\delta\co [0,1] \to \widetilde S$, we push forward to $S$ and appeal to the first case.
\end{proof}

Applying the lemma to any closed geodesic $\gamma_0$ we have the following.

\begin{corollary} \label{C:longstraightsegment}
Let $r>0$ and let $\epsilon$ be as in Lemma \ref{L:longstraightsegment}.
If $q$ in $\calQ^1(S)$ lies over the $r$--thick part of $\Teich(S)$ and $\gamma$ in $\pi_1(S)$, then $\gamma_0$ contains a straight segment of length at least $\epsilon$. 
\qed
\end{corollary}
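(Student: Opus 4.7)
The plan is to reduce the corollary directly to Lemma \ref{L:longstraightsegment} by passing to the universal cover, and then to use the $r$--thickness hypothesis to rule out wrapping upon projection.

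First, I would fix a biinfinite lift $\widetilde{\gamma}_0^*$ of the closed $q$--geodesic $\gamma_0^*$ to $\widetilde{S}$ and restrict to any unit--length subsegment $\delta\co[0,1]\to\widetilde{S}$. Since $\delta$ is a local $q$--geodesic segment and $q$ lies over the $r$--thick part of $\Teich(S)$, Lemma \ref{L:longstraightsegment} yields a subarc of $\delta$ of length at least $\epsilon$ whose interior contains no singularities of $q$. Enlarging this subarc along $\widetilde{\gamma}_0^*$ to be maximally singularity--free produces a straight segment $\widetilde{s}\subset\widetilde{\gamma}_0^*$ of $q$--length at least $\epsilon$.

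Next, I would push $\widetilde{s}$ forward under $\widetilde{S}\to S$. Its image is a subarc of $\gamma_0^*$ of length at least $\epsilon$ with no interior singularities, which is a bona fide straight segment of $\gamma_0^*$ as soon as the projection is injective, i.e., provided $\epsilon\leq\|\gamma\|_q$. To verify this I would invoke the $r$--thickness hypothesis: since $\|\gamma\|_m\geq r$ and $\|\gamma\|_m\leq\HypFlatQIK\|\gamma\|_q$ by \eqref{E:Kcomparelengths}, one obtains
\[
\|\gamma\|_q \ \geq\ \frac{r}{\HypFlatQIK} \ =\ (4g-2)\,\epsilon \ \geq\ \epsilon,
\]
where the last inequality uses $g\geq 2$ (forced by $\chi(S)<0$).

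There is no real obstacle: Lemma \ref{L:longstraightsegment} does all the substantive work, and the only issue to watch is whether the singularity--free arc in the cover might wrap when projected to $\gamma_0^*$. The $r$--thickness hypothesis, through the length comparison \eqref{E:Kcomparelengths}, rules this out automatically.
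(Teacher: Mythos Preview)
Your argument is correct and is essentially the paper's approach: the paper's proof is the single line ``Applying the lemma to any closed geodesic $\gamma_0$,'' since Lemma~\ref{L:longstraightsegment} is already stated for local $q$--geodesic segments $\delta\co[0,1]\to S$, not just in $\widetilde S$. Consequently your passage to the universal cover and the subsequent injectivity check (while correct) are unnecessary---one simply feeds a unit--length arc of the parametrized closed geodesic $\gamma_0^*$ in $S$ into the lemma, and the resulting singularity--free subarc is (contained in) the desired straight segment.
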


%%%%%%%%%%%%%%%%%%%%%%%%%
\subsection{Teichm\"uller geodesics and lengths} \label{S:teichmuller geodesics and lengths}
%%%%%%%%%%%%%%%%%%%%%%%%%

We refer the reader to \cite{Abikoff.1980} and \cite{Hubbard.2006} for detailed treatments of Teichm\"uller theory.

%%%%%%%%%%%%%%%%%%%%%%
\subsubsection{Teichm\"uller deformations.} \label{S:teichmuller geodesics}
%%%%%%%%%%%%%%%%%%%%%%

The Teichm\"uller deformation associated to a quadratic differential $q$ in $\calQ^1(S)$ determines a $1$--parameter family of quadratic differentials $q_t$.  
More precisely, if $q$ has preferred coordinate $\zeta = x+ iy$, then $q_t$ is determined by its preferred coordinate $\zeta_t = e^t x + i e^{-t} y$ (in particular, $q = q_0$).  
The map $\tau_q\co \RR \to \Teich(S)$ obtained by composing $t \mapsto q_t$ with the projection $\calQ^1(S) \to \Teich(S)$, namely $\tau_q(t) = m_t = m(q_t)$, is a \textit{Teichm\"uller geodesic}.    
Every geodesic in $\Teich(S)$ is of this form.

%%%%%%%%%%%%%%%%%%
\subsubsection{Balance times}\label{S:Balance.Times}
%%%%%%%%%%%%%%%%%%

If $\delta\co I \to S$ or $\delta\co I \to \widetilde S$ is a (local) $q$--geodesic, we can reparameterize $\delta$ to be a (local) $q_t$--geodesic for any $t$.  
In particular, straight segments can be linearly reparameterized to be (locally) geodesic.  
We denote the reparameterization by $\delta_t$.

For any $\gamma$ in $\pi_1(S)$ we have
	  \[
	  	\|\gamma \|_{q_t,h}   
			= \| \gamma \|_{q,h}  \, e^t \mbox{ and }    \| \gamma \|_{q_t,v} 
			=  \| \gamma \|_{q,v} \,  e^{-t}.
	\]
We let $\gamma_t^*$ and $\widetilde \gamma_t^*$ denote the $q_t$--geodesic reparameterizations of the $q_t$--geodesics $\gamma_0^*$ and $\widetilde \gamma_0^*$, respectively.

We say that $\gamma$ is \textit{balanced at time $t$} if $\| \gamma \|_{q_t,h} = \| \gamma \|_{q_t,v}$.  
If $\gamma$ is balanced at time $t_0$, then for $b = \|\gamma\|_{q_{t_0},v} + \|\gamma \|_{q_{t_0},h}$, we have
\begin{equation} \label{E:euclengthcosh} b \cosh(t-t_0) \leq \| \gamma \|_{q_t} \leq 2b \cosh(t-t_0)
\end{equation}
by \eqref{E:length,vertical,horizontal}.
So $\| \gamma \|_{q_t}$ is minimized  in the interval 
$\big[t_0-\arccosh(2), \, t_0+\arccosh^{-1}(2)\big]$ and grows exponentially in $|t|$.

Given any $q$, suppose $m'_t$ is a $1$--parameter family of hyperbolic metrics on $S$ for which $id_{\widetilde S} \co (\widetilde S,m'_t) \to (\widetilde S,q_t)$ is a $(\HypFlatQIKgen,\HypFlatQICgen)$--quasiisometry.  Then 
\begin{equation} \label{E:hyplengthcosh} 
\frac{b}{\HypFlatQIKgen} \cosh(t - t_0) \leq \| \gamma \|_{m_t} \leq 2b\HypFlatQIKgen  \cosh(t - t_0)
\end{equation}
by \eqref{E:Kcomparelengths2} and \eqref{E:euclengthcosh}.
In particular, the $m'_t$--length along $\tau_q(t)$ is minimized in the interval 
$\big[t_0 - \arccosh(2\HypFlatQIKgen^2), \, t_0+ \arccosh(2\HypFlatQIKgen^2)\big]$.

As an example, we could take $m'_t = m_t = m(q_t)$ to be the underlying hyperbolic metric, and then $(\HypFlatQIKgen,\HypFlatQICgen) = (\HypFlatQIK,\HypFlatQIC)$ by Lemma \ref{L:FMuniformqi}.
However, Theorem \ref{T:hyperbolictosolv} below provides our primary example of interest.

%%%%%%%%%%%%%%%%%%%%%%%%
\subsubsection{Vertical and horizontal.}
%%%%%%%%%%%%%%%%%%%%%%%%

Given $\epsilon > 0$, $0 < \theta < \pi/4$ and $q$ in $\calQ^1(S)$, we say that a $q$--straight segment $\delta$ is \textit{$\theta$--almost vertical} (respectively, \textit{$\theta$--almost horizontal}) \textit{with respect to $q$} if it makes an angle less than $\theta$ with the vertical (respectively, horizontal) direction.  
A closed geodesic $\gamma_0^*$, or its lift $\widetilde \gamma_0^*$, is called \textit{$(\epsilon,\theta)$--almost vertical} (respectively, \textit{$(\epsilon,\theta)$--almost horizontal}) \textit{with respect to $q$} provided it is a concatenation of $q$--straight segments each of which is $\theta$--almost vertical (respectively, $\theta$--almost horizontal), or has length less than $\epsilon$.  
Subject to certain constraints described below, the constants $\epsilon$ and $\theta$ will be fixed, and we will thus refer to segments and geodesics as simply almost vertical or almost horizontal.  
The discussion here differs from that of \cite{MasurMinsky.1999} in that the constraints we consider  depend on the thickness constant $r > 0$.

%%%%%%%%%%%%%%%%%%%%%%%%%%%
\subsubsection{Nonfilling curves after Masur and Minsky.}
%%%%%%%%%%%%%%%%%%%%%%%%%%%

The next proposition relies heavily on the work of Masur and Minsky, specifically Sections 5 and 6 of \cite{MasurMinsky.1999}.  In particular, Masur and Minsky place an upper bound on $\epsilon$ and $\theta$, depending only on $\chi$, that dictates, among other things, the amount of time it takes for a balanced geodesic to become almost horizontal.  We henceforth assume that $\ourepsilon,\ourtheta$ are less than this bound.  For any fixed $r > 0$, we also assume that $\epsilon_0$ is less than the constant $\epsilon$ coming from Lemma \ref{L:longstraightsegment}.

%, as well as the statement of Lemma \ref{L:longstraightsegment} for $r > 0$.
%The constants $\ourepsilon$ and $\ourtheta$ depend only on $r$ and $\chi$.

\begin{proposition} \label{P:nonfillinggrowth} Given $r > 0$, there is a  $\OurT > 0$ with the following property.  Suppose $q$ in $\calQ^1(S)$ defines an $r$--thick geodesic $\tau_q$ in $\Teich(S)$ and $\gamma$ in $\pi_1(S)$ is nonfilling, balanced at time $0$ in $\RR$.   For any geodesic subpath $\delta_0 \subset \widetilde \gamma_0^*$ with $\| \delta_0 \|_q > e^\OurT$ we have
	\[ 
		\| \delta_t \|_{q_t} > \frac {\ourepsilon e^{|t| - \OurT}}{4} \| \delta_0  \|_q = \frac{\ourepsilon e^{-\OurT}}{4}  e^{|t|}  \| \delta_0 \|_q
	\]
for any $t$.
\end{proposition}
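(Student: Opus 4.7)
The plan is to combine the Masur--Minsky analysis enabled by nonfilling with an $r$-thickness compactness argument in order to bound the balance time of any long subpath of $\widetilde\gamma_0^*$, and then derive exponential growth by a Minkowski estimate. I will carry this out in three stages.

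\textbf{Stage 1 (direction after a uniform time).} Nonfilling of $\gamma$ supplies a simple closed curve $\alpha$ disjoint from $\gamma$. Applying Sections~5--6 of \cite{MasurMinsky.1999} to the balanced simple curve $\alpha$ along the $r$-thick geodesic $\tau_q$ produces a uniform $T_\alpha = T_\alpha(\chi,r)$ such that $\alpha_t^*$ is $(\ourepsilon,\ourtheta)$-almost horizontal for $t\geq T_\alpha$ and almost vertical for $t\leq -T_\alpha$. Since $i(\alpha,\gamma)=0$, the $q_t$-geodesic representatives $\alpha_t^*$ and $\gamma_t^*$ are disjoint, so a long straight segment of $\gamma_t^*$ whose direction were too far from horizontal at some $t\geq T_\alpha$ would have to cross $\alpha_t^*$ transversely. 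Combining this with Corollary~\ref{C:longstraightsegment} applied to $\gamma_t^*$ and the compactness of the $r$-thick part of moduli space, I obtain a further uniform $\NaughtT \geq T_\alpha$ such that $\gamma_t^*$ itself is $(\ourepsilon,\ourtheta)$-almost horizontal for $t\geq \NaughtT$ and almost vertical for $t\leq -\NaughtT$.

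\textbf{Stage 2 (balance-time bound).} Fix any subpath $\delta_0 \subset \widetilde\gamma_0^*$, let $h_0,v_0$ be its horizontal and vertical contents at time~$0$, and set $L^\pm = \|\delta_{\pm\NaughtT}\|_{q_{\pm\NaughtT}}$. Partitioning $\delta_{\NaughtT}$ into unit-length pieces and applying Lemma~\ref{L:longstraightsegment} to each produces in each piece a non-singular arc of length $\geq \epsilon$ lying in a long (hence almost horizontal) straight segment of $\widetilde\gamma_{\NaughtT}^*$; summing horizontal contributions yields $e^{\NaughtT}h_0 \geq c L^+$ for a uniform $c = c(\chi,r) > 0$, and the symmetric argument at $-\NaughtT$ gives $e^{\NaughtT}v_0 \geq c L^-$. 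The elementary per--straight-segment inequality
\[
\sqrt{h^2+v^2} \ \leq\ \sqrt{h^2 + e^{-4\NaughtT}v^2} \ +\ \sqrt{e^{-4\NaughtT}h^2 + v^2}
\]
sums to the complementary upper bound $\|\delta_0\|_q \leq e^{-\NaughtT}(L^+ + L^-)$. Combining, the ratio $v_0/h_0$ lies in $[c\,e^{-2\NaughtT},\, e^{2\NaughtT}/c]$, so the balance time $t^* = \tfrac12\log(v_0/h_0)$ of $\delta_0$ satisfies $|t^*| \leq \NaughtT + \tfrac12|\log c|$.

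\textbf{Stage 3 (Minkowski finish).} Minkowski's inequality applied segment-by-segment to the vectors $(e^t h_i, e^{-t} v_i)$ gives $\|\delta_t\|_{q_t} \geq \sqrt{(e^t h_0)^2 + (e^{-t} v_0)^2} \geq \sqrt{h_0 v_0}\, e^{|t-t^*|}$, while $\|\delta_0\|_q \leq h_0 + v_0 = 2\sqrt{h_0 v_0}\cosh(t^*) \leq 2\sqrt{h_0 v_0}\, e^{|t^*|}$; dividing,
\[
\frac{\|\delta_t\|_{q_t}}{\|\delta_0\|_q} \ \geq\ \tfrac12\, e^{|t-t^*|-|t^*|} \ \geq\ \tfrac12\, e^{|t|-2|t^*|}.
\]
Setting $\OurT := 2\NaughtT + |\log c| + \log(2/\ourepsilon)$, the hypothesis $\|\delta_0\|_q > e^{\OurT}$ is more than enough to force each $L^\pm \geq 2$ so that Stage~2's compactness applies, and the choice of $\OurT$ gives $2|t^*| \leq \OurT - \log(2/\ourepsilon)$, which produces $\|\delta_t\|_{q_t} > (\ourepsilon/4)\, e^{|t|-\OurT}\|\delta_0\|_q$ for every~$t$. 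The main obstacle will be Stage~1: the non-simple $\gamma$ can have many long straight segments in disparate directions, and controlling each of these using only disjointness from the single simple $\alpha$ is the technical heart, where the Masur--Minsky machinery must be adapted beyond the simple-curve setting they treat directly.
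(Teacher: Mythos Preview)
Your outline is essentially correct and shares its crucial first step with the paper, but Stage~1 contains a genuine gap that you flag but do not close. The Masur--Minsky lemmas give a uniform time measured from the balance time $t_0$ of the \emph{simple} curve $\alpha$, not from $0$; since $\alpha$ is an arbitrary disjoint simple curve, there is no a~priori bound on $t_0$, so your claim that $\alpha_t^*$ is almost horizontal for $t \geq T_\alpha$ with $T_\alpha = T_\alpha(\chi,r)$ is unjustified as stated. The paper closes this gap with a short observation: once one knows (via \cite[Lemmas~6.4--6.5]{MasurMinsky.1999}) that $\gamma_t^*$ is almost horizontal for $t > t_0 + C$ and almost vertical for $t < t_0 - D$ with $C,D$ uniform, one notes that $\gamma$ can be neither almost horizontal nor almost vertical at its own balance time $0$, which forces $0 \in [t_0 - D,\, t_0 + C]$ and thereby bounds $\NaughtT$ uniformly. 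Your crossing heuristic for the transfer from $\alpha$ to $\gamma$ is indeed the content of \cite[Lemma~6.4]{MasurMinsky.1999}: a single $\ourepsilon$--long almost-horizontal straight segment disjoint from $\gamma_t^*$ (supplied by $\alpha_t^*$ and Corollary~\ref{C:longstraightsegment}) forces every long straight segment of $\gamma_t^*$ to become almost horizontal after a further uniform time.

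Once Stage~1 is established, your Stages~2--3 give a valid alternative to the paper's more direct endgame. The paper argues: at time $\NaughtT$, each unit-length piece of $\delta_{\NaughtT}$ contains (by Lemma~\ref{L:longstraightsegment}) an $\ourepsilon$--long almost-horizontal arc, whence $\|\delta_t\|_{q_t} \geq \tfrac{\ourepsilon}{4} e^{t-\NaughtT}\|\delta_{\NaughtT}\|_{q_{\NaughtT}}$ for $t \geq \NaughtT$; combined with $\|\delta_{\NaughtT}\|_{q_{\NaughtT}} \geq e^{-\NaughtT}\|\delta_0\|_q$ this gives the bound for $t \geq \NaughtT$, and the crude estimate $\|\delta_t\|_{q_t} \geq e^{-t}\|\delta_0\|_q$ covers $0 \leq t < \NaughtT$. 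Your route---bounding the balance time of the subpath $\delta_0$ via both $h_0$ and $v_0$ and then invoking Minkowski---is a pleasant reformulation that exploits the $t \leftrightarrow -t$ symmetry explicitly rather than treating the two halves separately.
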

We note the similarity between the conclusion of this proposition and \eqref{E:euclengthcosh}.  By comparison, \eqref{E:euclengthcosh} is a statement about the $q_t$--length of the entire curve $\gamma$, while this proposition provides information about the $q_t$--length of any definite length segment of $\gamma_0^*$.  In particular, it also grows exponentially outside some neighborhood of the balance time.  Furthermore, while \eqref{E:euclengthcosh} is true for any closed geodesic, Proposition \ref{P:nonfillinggrowth} is false if one allows $\gamma$ to be filling: there is no $T$ making the proposition valid for all filling $\gamma$.

\begin{proof}[Proof of Proposition \ref{P:nonfillinggrowth}]
In what follows, we appeal to Lemmas 6.4 and 6.5 of \cite{MasurMinsky.1999}, which provide bounds on diameters of shadows in the curve complex $\calC(S)$ of certain subsets of the Teichm\"uller geodesic $\tau_q$.  
Since ours is an $r$--thick geodesic, the shadow is a uniform quasigeodesic.
This is Lemma 4.4 of \cite{Rafi.Schleimer.2009}. 
It also follows quickly from the main theorem of \cite{Minsky.1996} (see Section 7.4 of \cite{Kent.Leininger.2008}).  
We may therefore turn bounds on diameters in $\calC(S)$ into bounds on diameters in the domain $\RR$ of $\tau_q$, and we do so without further comment.

Since $\gamma$ is nonfilling, there is an essential simple closed curve $\alpha$ disjoint from it.
Let $t_0$ denote the balance time for $\alpha$.
\begin{claim} There exists $\NaughtT > 0$, depending only on $\ourepsilon$, $\ourtheta$, and $r$ such that $\gamma_t^*$ is almost horizontal for all $t > \NaughtT$ and is almost vertical for all $t < -\NaughtT$.
\end{claim}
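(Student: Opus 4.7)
The plan is to apply the Masur--Minsky analysis of Teichm\"uller geodesics \cite{MasurMinsky.1999} to the simple closed curve $\alpha$ disjoint from $\gamma$, and then transfer the conclusion to $\gamma_t^*$ using this disjointness. The role of $\alpha$ here is crucial: Masur--Minsky's framework applies most cleanly to simple closed curves, while $\gamma$ itself may be highly non-simple, so we use $\alpha$ as a proxy and exploit disjointness to control $\gamma$.

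First I would invoke Sections~5--6 of \cite{MasurMinsky.1999} applied to $\alpha$: on the $r$--thick geodesic $\tau_q$, there is a constant $T_{\rm MM} = T_{\rm MM}(\ourepsilon, \ourtheta, r)$ such that $\alpha_t^*$ is $(\ourepsilon, \ourtheta)$--almost horizontal for $t > t_0 + T_{\rm MM}$ and $(\ourepsilon, \ourtheta)$--almost vertical for $t < t_0 - T_{\rm MM}$. Next I would bound $|t_0|$ uniformly in terms of $\ourepsilon, \ourtheta, r$: because $\gamma$ is balanced at time $0$ and $\alpha$ is disjoint from $\gamma$, the ``active times'' of the two curves along $\tau_q$ are close. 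Concretely, any short marking at $\tau_q(0)$ contains a simple closed curve at bounded $\calC(S)$--distance from $\gamma$, hence also from $\alpha$; the uniform quasigeodesic shadow of the thick Teichm\"uller geodesic $\tau_q$ to $\calC(S)$ coming from \cite{Rafi.Schleimer.2009} then pins $|t_0| \leq B$ for some $B = B(\ourepsilon, \ourtheta, r)$.

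The main obstacle is transferring almost-horizontality from $\alpha_t^*$ to $\gamma_t^*$. Since $\gamma$ and $\alpha$ are disjoint, the geodesic representatives $\gamma_t^*$ and $\alpha_t^*$ can be realized disjointly, so $\gamma_t^*$ lies in a component of $S - \alpha_t^*$. In an $r$--thick flat surface, a Poincar\'e recurrence / pigeonhole argument bounds every saddle connection's $q$--length by some $L = L(\chi, r)$, so each straight segment of $\gamma_0^*$ has $q$--length at most $L$. Purely vertical segments then have $q_t$--length at most $L e^{-t}$, which falls below $\ourepsilon$ once $t > \log(L/\ourepsilon)$; nearly-vertical skinny segments, confined to the subsurface bounded by the almost-horizontal $\alpha_t^*$, would produce short essential loops violating $r$--thickness and are therefore ruled out; the remaining saddle connections have vertical-to-horizontal ratios bounded uniformly, so their angles from horizontal fall below $\ourtheta$ after a further uniform wait $T'$ depending only on $\ourepsilon, \ourtheta, r$.

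Setting $\NaughtT = B + T_{\rm MM} + T'$ yields the forward half of the claim; the backward half ($t < -\NaughtT$) follows symmetrically under $t \mapsto -t$, which interchanges the horizontal and vertical roles and swaps the two statements. The hard part of this plan is the third step: making precise the structural interaction between the almost-horizontal confinement provided by $\alpha_t^*$ and the flat-geometric constraints coming from $r$--thickness, so as to rule out problematic intermediate-slope saddle connections in $\gamma_0^*$ in a manner uniform in $\gamma$.
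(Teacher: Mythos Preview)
Your overall strategy---make $\alpha$ almost horizontal via Masur--Minsky, then transfer to $\gamma$ via disjointness---matches the paper's. But the transfer step contains a genuine error, and your control of $|t_0|$ is more complicated than necessary.

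\textbf{The gap.} In your third step you assert that ``a Poincar\'e recurrence / pigeonhole argument bounds every saddle connection's $q$--length by some $L = L(\chi, r)$.'' This is false: on an $r$--thick flat surface, saddle connections can be arbitrarily long. (Lemma~\ref{L:longstraightsegment} gives a \emph{lower} bound on the length of some straight subsegment of any unit-length geodesic; there is no uniform upper bound on individual saddle connections.) The straight segments of $\gamma_0^*$ can therefore have length depending on $\gamma$, and your subsequent claims---that vertical segments become shorter than $\ourepsilon$ after time $\log(L/\ourepsilon)$, and that intermediate-slope segments are ruled out by short-loop considerations---collapse without this bound. The ``confinement'' of $\gamma_t^*$ to a component of $S \setminus \alpha_t^*$ does not by itself force segments to be nearly horizontal: that component can be large and carry long segments of arbitrary slope.

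\textbf{How the paper does the transfer.} Rather than a direct flat-geometric argument, the paper feeds the disjointness back into the Masur--Minsky machinery. Once $\alpha_t^*$ is almost horizontal (Lemma~6.5 of \cite{MasurMinsky.1999}), Corollary~\ref{C:longstraightsegment} furnishes a straight segment of $\alpha_t^*$ of length at least $\ourepsilon$; since $i(\gamma,\alpha)=0$, this segment is disjoint from $\gamma_t^*$. Lemma~6.4 of \cite{MasurMinsky.1999} (applied exactly as in the last paragraph of their proof of Lemma~6.5) then yields, after a further uniformly bounded time, that $\gamma_t^*$ is itself almost horizontal. This is the step your argument is missing a correct substitute for.

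\textbf{On bounding $|t_0|$.} Your curve-complex detour is plausible but unnecessary. The paper avoids bounding $|t_0|$ directly: having shown $\gamma_t^*$ almost horizontal for $t > \SecondT$ and almost vertical for $t < \ThirdT$ (with $\SecondT - t_0$ and $t_0 - \ThirdT$ uniformly bounded), it simply observes that at its balance time $0$ the curve $\gamma$ is neither almost horizontal nor almost vertical, forcing $0 \in [\ThirdT,\SecondT]$ and hence bounding $\NaughtT = \max\{\SecondT,|\ThirdT|\}$.
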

\begin{proof}
By Lemma 6.5 of \cite{MasurMinsky.1999}, there is a $\FirstT > t_0$ such that $\FirstT - t_0$ is bounded by a constant $B(\ourepsilon,\ourtheta,r)$ and such that for all $t > \FirstT$, the geodesic $\alpha_t^*$ is almost horizontal.   
Since $i(\delta,\alpha) = 0$, no segment of $\gamma_t^*$ intersects any segment of $\alpha_t^*$ away from the singularities.  
Pick a straight segment of $\alpha_t^*$ with length at least $\ourepsilon$ (from Corollary \ref{C:longstraightsegment}).
As in the last paragraph of the proof of Lemma 6.5 of \cite{MasurMinsky.1999}, we can appeal to Lemma 6.4 of \cite{MasurMinsky.1999} to find a $\SecondT > \FirstT$ such that, for all $t > \SecondT$, the geodesic $\gamma_t^*$ is almost horizontal.\footnote{The key to the proof of Lemma 6.5 of \cite{MasurMinsky.1999} is finding a disjoint almost horizontal straight segment.
In our setting, this is provided by a segment of $\alpha_t^*$.}
Moreover, the distance $\SecondT - \FirstT$, and hence also $\SecondT - t_0$, is bounded by a constant  $C(\ourepsilon,\ourtheta,r)$.

Reversing the roles of horizontal and vertical, there is $\ThirdT < t_0$ such that $\gamma_t^*$ is almost vertical for all $t < \ThirdT$, and $t_0 - \ThirdT$ is bounded by some $D(\ourepsilon,\ourtheta,r)$.  The balance time $0$ for $\gamma$ must occur in the interval $[\ThirdT,\SecondT]$ (since $\gamma$ is neither almost vertical nor almost horizontal when it is balanced), and setting $\NaughtT = \max \{ \SecondT, |\ThirdT| \}$ proves the claim.
\end{proof}

For all $t > 0$, we have
	\begin{equation} \label{E:exponentialgrowthqd} 
		\| \delta_t \|_{q_t} \geq e^{-t} \| \delta_0 \|_{q_0} .
	\end{equation}
For $t = \NaughtT$, we have
	\[ 
		\| \delta_{\NaughtT} \|_{q_{\NaughtT}} \geq e^{-\NaughtT} \| \delta_0 \|_{q_0},
	\]
and we set $\OurT = 2\NaughtT$.

Now, if $\delta_0 \subset \widetilde \gamma_{\NaughtT}^*$ is a straight segment of length at least $e^\OurT$ we have
	\[ 
		\| \delta_{\NaughtT} \|_{q_{\NaughtT}}  \geq e^{-\NaughtT} \| \delta_0 \|_{q_0} \geq e^{-\NaughtT} e^\OurT > 1. 
	\]
Therefore, by Lemma \ref{L:longstraightsegment}, the segment $\delta_{\NaughtT}$ contains a segment $\delta_{\NaughtT}'$ of length at least $\ourepsilon$ contained in a straight segment.  
This segment $\delta_{\NaughtT}'$ must be almost horizontal since $\gamma_{\NaughtT}^*$ (and hence $\widetilde \gamma_{\NaughtT}^*$) is almost horizontal.  Therefore, for all $t \geq \NaughtT$ we have
		\[
			 \| \delta_t' \|_{q_t} 		
			\geq \| \delta_t' \|_{q_t,h}
			\geq e^{t-\NaughtT} \| \delta_{\NaughtT}' \|_{q_{\NaughtT},h} 	
			\geq \frac{e^{t-\NaughtT}}{2} \| \delta_{\NaughtT}' \|_{q_{\NaughtT}} 			\geq \frac{\ourepsilon e^{t-\NaughtT}}{2}
		\]
There is such a segment $\delta_{\NaughtT}'$ in each segment of length $1$ in $\delta_{\NaughtT}$.  By subdividing $\delta_{\NaughtT}$ into a maximal number $n$ of disjoint segments of length at least $1$, so that $n \leq \| \delta_{\NaughtT} \|_{q_\NaughtT} < n+1$, we have
		\[
			\| \delta_t \|_{q_t} \geq \frac{n \ourepsilon e^{t-\NaughtT}}{2}
			= \frac{n}{n+1} \frac{(n+1) \ourepsilon e^{t-\NaughtT}}{2}
			\geq \frac{\ourepsilon e^{t-\NaughtT}}{4} \| \delta_{\NaughtT} \|_{q_\NaughtT}
		\]
Combining these strings of  inequalities we see that, for $t \geq \NaughtT$, we have
	\[ 
		\| \delta_t \|_{q_t} 
		\geq \frac{\ourepsilon e^{t-\NaughtT}}{4} e^{-\NaughtT} \| \delta_0 \|_{q_0} 
		= \frac{\ourepsilon e^{t-\OurT}}{4} \| \delta_0 \|_{q_0}.
	\]
On the other hand, if $0 \leq t < \NaughtT$, then $-t > t - \OurT$.
Since $\ourepsilon/4 < 1$, we therefore have 
	\[ 
		\| \delta_t \|_{q_t} 
		\geq  e^{-t} \| \delta_0 \|_{q_0} 
		\geq  e^{t-\OurT} \| \delta_0 \|_{q_0}  
		\geq   \frac{\ourepsilon e^{t-\OurT}}{4} \| \delta_0 \|_{q_0} .
	\]
by (\ref{E:exponentialgrowthqd}).
Thus the proposition follows for $t \geq 0$.  A symmetric argument proves the proposition for $t \leq 0$.
\end{proof}

%%%%%%%%%%%%%%%%%%%%%%%%%%%%%%
\subsection{Surface bundles over Teichm\"uller geodesics}
%%%%%%%%%%%%%%%%%%%%%%%%%%%%%%

%%%%%%%%%%%%%%%%%%%%%%%%%%%%%%%%
\subsubsection{Singular \textsc{Sol} and hyperbolic metrics are uniformly quasiisometric} \label{S:singular sol}
%%%%%%%%%%%%%%%%%%%%%%%%%%%%%%%%

Given $q$ in $\calQ^1(T)$ with Teichm\"uller geodesic $\tau_q$, consider the pullback bundle
	\[ 
		\xymatrix{  \HH^2 \ar[r] & \calH_{\tau_q} \ar[r] & \tau_q .} 
	\]
The lifted quadratic differential $q_t$ defines a flat metric on the fiber $\calH_{\tau_q(t)} \cong \HH^2$.
The lifted Teichm\"uller mapping identifies the fibers $\calH_{\tau_q(t)}$ with $\calH_{\tau_q(0)}$, determining a homeomorphism $\calH_{\tau_q(t)} \cong \widetilde S \times \RR$ so that $(z,0) \mapsto (z,t)$ is the Teichm\"uller mapping.   The coordinate $t$ and preferred coordinates $\zeta = x+iy$ for $q$ give local coordinates for $S \times \RR$ away from $\{ \mbox{singularities of $q$} \} \times \RR$.  
We thus have a metric $e^{2t} dx^2 + e^{-2t} dy^2 + dt^2$ on $(S - \{ \mbox{singularities of $q$} \})\times \RR$ whose metric completion is naturally identified with $\widetilde S \times \RR \cong \calH_{\tau_q}$, and whose restriction to each fiber is just the metric $q_t$.  
We let $\calH^\Sol_{\tau_q}$  denote $\calH_{\tau_q}$ with this metric.
This is the \textit{singular \textsc{Sol} metric associated to $q$}.

We now note that Proposition \ref{P:nonfillinggrowth} provides an ``exponential growth'' version of Theorem \ref{T:generalbundlewidth4} for the singular  \textsc{Sol}  metric.  
Given $\gamma$ in $\pi_1(S)$, define \textit{isometric} sections $\{ \isosection_n \}_{n \in \ZZ}$ of $\calH^\Sol_{\tau_q} \to \tau_q$ by picking linearly ordered points $\{ z_n \}_{n \in \ZZ} = \{(z_n,0) \}_{n \in \ZZ}  \subset \widetilde \gamma_0^* \subset  \widetilde S \times \{ 0 \}$.  
Let $\isosection_n = \{ (z_n,t) \mid t \in \RR \} \subset \calH^\Sol_{\tau_q} \cong \widetilde S \times \RR$. 
By construction, the $\isosection_n$ are linearly ordered over every $\tau_q(t)$.   
Let $\delta^n_0$ denote the segment from $z_n$ to $z_{n+1}$ inside $\widetilde \gamma_0^*$, so that $\delta^n_t$ is the segment from $\isosection_n$ to $\isosection_{n+1}$ inside $\widetilde \gamma_t^*$.
This gives us the following singular  \textsc{Sol}  variant of Theorem \ref{T:generalbundlewidth4}.

\begin{proposition}\label{P:isometricsectionsflaring}
Given $r > 0$, let $\OurT > 0$ be as in Proposition \ref{P:nonfillinggrowth}.  
Let $q$ be a unit--norm quadratic differential defining an $r$--thick geodesic $\tau_q$ in $\Teich(S)$ and suppose that $\gamma$ in $\pi_1(S)$ is nonfilling and balanced at time zero.  
Given isometric sections $\{\isosection_n \}_{n \in \ZZ}$ as above with
	\[ 
		d_{\tau_q(0)}(\isosection_n,\isosection_{n+1}) = \| \delta^n_0 \|_{q_0} \geq e^\OurT ,	\]
we have
	\[ 
		d_{\tau_q(t)} (\isosection_n,\isosection_{n+1}) \geq \frac{\ourepsilon e^{-\OurT}}{4} e^{|t|} d_{\tau_q(0)}(\isosection_n,\isosection_{n+1}). 
	\quad \quad \quad \quad \quad \quad \quad \quad \quad \quad \quad \quad
	\qed
	\]
\end{proposition}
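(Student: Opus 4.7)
The plan is to reduce the proposition directly to Proposition \ref{P:nonfillinggrowth} by unwinding the definitions. By construction, the isometric section $\isosection_n \subset \calH^\Sol_{\tau_q} \cong \widetilde S \times \RR$ meets the fiber over $\tau_q(t)$ in the single point $(z_n,t)$, and by the explicit form $e^{2t}dx^2 + e^{-2t}dy^2 + dt^2$ of the singular \textsc{Sol} metric the induced metric on this fiber is exactly $q_t$ on $\widetilde S$. Consequently
\[
d_{\tau_q(t)}(\isosection_n,\isosection_{n+1}) \;=\; d_{q_t}(z_n,z_{n+1}),
\]
so the proposition becomes a statement about how the $q_t$-distance between the fixed pair of points $z_n, z_{n+1} \in \widetilde S$ grows with $|t|$.

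Next I would identify this fiber distance with the $q_t$-length of the reparametrized subsegment $\delta^n_t$. The universal cover $(\widetilde S, q_t)$ is a complete simply connected singular flat surface, all of whose cone angles are at least $3\pi$, and is therefore CAT(0); in particular, geodesic segments are unique and realize distances. The bi-infinite $q_t$-geodesic $\widetilde \gamma_t^*$ is the $q_t$-reparametrization of $\widetilde \gamma_0^*$ and passes through the same underlying points $z_n$ and $z_{n+1}$, so its subsegment $\delta^n_t$ from $z_n$ to $z_{n+1}$ is the unique $q_t$-geodesic between them. Hence $d_{q_t}(z_n,z_{n+1}) = \|\delta^n_t\|_{q_t}$, and the hypothesis of the proposition reads $\|\delta^n_0\|_{q_0} \geq e^\OurT$.

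Having translated both sides of the target inequality into $q_t$-lengths of subsegments of the bi-infinite geodesic, I would then apply Proposition \ref{P:nonfillinggrowth} directly to $\delta^n_0 \subset \widetilde \gamma_0^*$. This immediately yields
\[
\|\delta^n_t\|_{q_t} \;\geq\; \frac{\ourepsilon e^{-\OurT}}{4}\,e^{|t|}\,\|\delta^n_0\|_{q_0}
\]
for every $t \in \RR$, which is exactly the claim. There is essentially no obstacle here: all the real work is already encoded in Proposition \ref{P:nonfillinggrowth}, and the argument here is bookkeeping that interprets fiber distances in $\calH^\Sol_{\tau_q}$ as flat distances in $(\widetilde S, q_t)$ and then as $q_t$-lengths via CAT(0) uniqueness. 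The only small point to watch is matching the strict inequality $\|\delta_0\|_q > e^\OurT$ in the hypothesis of the earlier proposition with the nonstrict bound $\|\delta^n_0\|_{q_0}\geq e^\OurT$ assumed here, which is easily absorbed by a limiting argument or a harmless enlargement of $\OurT$.
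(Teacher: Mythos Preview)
Your proposal is correct and matches the paper's approach exactly: the paper gives no proof at all, marking the proposition with a \qed\ immediately after the statement, since it is set up in the preceding paragraph precisely so that it is an immediate translation of Proposition~\ref{P:nonfillinggrowth}. Your unwinding of the definitions (fiber metric is $q_t$, $\delta^n_t$ is the unique $q_t$--geodesic between $z_n$ and $z_{n+1}$) is exactly the bookkeeping the paper leaves implicit.
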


Given a unit--norm quadratic differential $q$ defining an $r$--thick geodesic $\tau_q$ in $\Teich(S)$ and a nonfilling $\gamma$ in $\pi_1(S)$, the space $\calA^\Sol(\gamma) = \cup \, \widetilde \gamma_t^*$ is $\delta^\Sol$--hyperbolic for some  $\delta^\Sol = \delta^\Sol(g,r)$.
In fact, this space is quasiisometric to the hyperbolic plane.
Following the argument (in Section \ref{S:qisection}) that derives Theorem \ref{T:generalbundlewidth3} from Theorem \ref{T:generalbundlewidth4}, we have the following corollary of Proposition \ref{P:isometricsectionsflaring}.

If $[a,b]$ is an interval, we let
	\[
		\calA^\Sol_{[a,b]} = \bigcup_{a \leq t \leq b } \widetilde \gamma_t^* .
	\]

\begin{corollary}\label{C:quasiconvexfiberinSOL}
Let $r > 0$ and let $\OurT$, $q$, and $\gamma$ be as in Proposition \ref{P:isometricsectionsflaring}.  
There are constants $\SOLfiberQCXconstantA$, $\SOLfiberQIconstantK$, and $\SOLfiberQIconstantC$ depending only on $r$ and the genus $g$ of $S$ such that the fiber $\widetilde \gamma_0$ is a $(\SOLfiberQIconstantK, \SOLfiberQIconstantC)$--quasigeodesic in $\calA^\Sol(\gamma)$ and $\calA^\Sol_{[-a,a]}$ is $\SOLfiberQCXconstantA$--quasiconvex for all $a$.
\qed
\end{corollary}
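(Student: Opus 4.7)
The plan is to adapt the argument of Section \ref{S:QIAxisFromSections} line by line to the singular $\Sol$ setting: the isometric sections $\{\isosection_n\}_{n\in\ZZ}$ play the role of the quasiisometric sections $\{\Sigma_n\}_{n\in\ZZ}$, and Proposition \ref{P:isometricsectionsflaring} replaces Theorem \ref{T:generalbundlewidth4}. Specifically, fix sections with $d_{\tau_q(0)}(\isosection_n,\isosection_{n+1})=e^{\OurT}$, so that by Proposition \ref{P:isometricsectionsflaring} the gap in every fiber $\widetilde\gamma_t^*$ is at least $\ourepsilon/4$ and the sections remain linearly ordered over all of $\tau_q$. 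Let $\calR_n$ denote the open region in $\calA^\Sol(\gamma)$ between $\isosection_n$ and $\isosection_{n+1}$, and define a retraction $\eta \co \calA^\Sol(\gamma) \to \widetilde\gamma_0^*$ sending $\calR_n\cup\isosection_n$ to $z_n=\isosection_n\cap\widetilde\gamma_0^*$.

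To show that $\eta$ is $(K,C)$-coarsely Lipschitz with $K,C$ depending only on $r$ and the genus $g$, it suffices, as in the claim of Section \ref{S:QIAxisFromSections}, to bound the number of sections any unit segment can cross: if $w\in\calR_m$ and $z\in\calR_n$ satisfy $d(w,z)\leq 1$, then $|m-n|\leq B$ for some $B=B(r,g)$. When $w$ and $z$ share a fiber, this is immediate from the fiberwise lower bound $\ourepsilon/4$ on consecutive-section gaps. When $w\in\widetilde\gamma_s^*$ and $z\in\widetilde\gamma_t^*$ with $|s-t|\leq 1$, one first projects $z$ fiberwise to $\widetilde\gamma_s^*$ by a Mitra-style argument analogous to the proof of Lemma \ref{L:qiladder}: because the $\Sol$ fibers are uniformly proper in $\calA^\Sol(\gamma)$ with constants depending only on $r$ and $g$, this projection is uniformly biLipschitz over unit intervals of $\tau_q$, reducing to the same-fiber case with bounded additive error. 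Since $\eta$ restricts to the identity on $\widetilde\gamma_0^*$, this coarse Lipschitz bound gives the claim that $\widetilde\gamma_0^*$ is a $(\SOLfiberQIconstantK,\SOLfiberQIconstantC)$-quasigeodesic.

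For the quasiconvexity of $\calA^\Sol_{[-a,a]}$ inside the $\delta^\Sol$-hyperbolic space $\calA^\Sol(\gamma)$, it is enough to produce, for each $a$, a uniformly coarsely Lipschitz retraction $\calA^\Sol(\gamma)\to\calA^\Sol_{[-a,a]}$. The natural candidate truncates the time coordinate along each section: a point of $\calR_n\cup\isosection_n$ at time $t$ is sent to the corresponding point inside the same region whose time is $\max(-a,\min(a,t))$. This map is $1$-Lipschitz on each section $\isosection_n\cong\RR$ and, by the same uniform properness and flaring estimates used for $\eta$, is coarsely Lipschitz globally with constants depending only on $r$ and $g$, yielding the constant $\SOLfiberQCXconstantA$ independent of $a$. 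I expect the main obstacle to be the careful bookkeeping of constants throughout: one must verify that the Mitra-style fiberwise projection between nearby fibers, the fiberwise lower bound on section gaps provided by Proposition \ref{P:isometricsectionsflaring}, and the time-truncation map each contribute coarse Lipschitz estimates that depend only on $r$ and $g$ and not on $a$ or on the particular nonfilling $\gamma$. Once this tracking is in place, the argument is a direct port of the proof in Section \ref{S:QIAxisFromSections}.
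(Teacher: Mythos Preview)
Your proposal is correct and matches the paper's own approach exactly: the paper gives no proof beyond the sentence ``Following the argument (in Section~\ref{S:qisection}) that derives Theorem~\ref{T:generalbundlewidth3} from Theorem~\ref{T:generalbundlewidth4},'' and your adaptation---replacing $\{\Sigma_n\}$ by the isometric sections $\{\isosection_n\}$, Theorem~\ref{T:generalbundlewidth4} by Proposition~\ref{P:isometricsectionsflaring}, and rerunning the region--counting Claim---is precisely the intended port.

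One remark on the quasiconvexity half: your time--truncation retraction does work (the flaring estimate bounds the vertical component of any unit fiber segment by a uniform constant, which is what makes the map coarsely Lipschitz independently of $a$), but there is a shorter route once the first half is in hand. Since $\pi\co\calA^\Sol(\gamma)\to\RR$ is $1$--Lipschitz and each isometric section meets $\widetilde\gamma_0^*$, one has $d_{\calA^\Sol}\big((z,t),\widetilde\gamma_0^*\big)=|t|$ exactly, so $\calA^\Sol_{[-a,a]}=\calN_a(\widetilde\gamma_0^*)$. In a $\delta^\Sol$--hyperbolic space, the $a$--neighborhood of any $(\SOLfiberQIconstantK,\SOLfiberQIconstantC)$--quasigeodesic is quasiconvex with constant depending only on $\delta^\Sol,\SOLfiberQIconstantK,\SOLfiberQIconstantC$ (via a thin--quadrilateral argument), giving $\SOLfiberQCXconstantA$ independent of $a$ for free.
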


Proposition \ref{P:isometricsectionsflaring} also has the following corollary.
\begin{corollary}\label{C:neighborhoodofsectionsinSOL}
Let $R, r > 0$ and let $\OurT$, $q$, $\gamma$, and $\isosection_n$ be as in Proposition \ref{P:isometricsectionsflaring}.  
There is an $\BoundOnSectionsInNeighborhood=\BoundOnSectionsInNeighborhood(R,r)$ such that if the $R$--neighborhood of $\isosection_n$ intersects $\isosection_m$, then $|n-m| \leq \BoundOnSectionsInNeighborhood$.
\qed
\end{corollary}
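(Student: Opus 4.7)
The plan is to work inside the $\delta^{\Sol}$-hyperbolic space $\calA^{\Sol}(\gamma)$ and project onto the balance-time fiber $\widetilde{\gamma}_0^*$, which by Corollary \ref{C:quasiconvexfiberinSOL} is a $(\SOLfiberQIconstantK, \SOLfiberQIconstantC)$-quasigeodesic. The exponential flaring from Proposition \ref{P:isometricsectionsflaring} will provide a large lower bound on distances between the points $z_k$ along this quasigeodesic, while the standard hyperbolic-geometric contraction of the projection will give a matching upper bound linear in $R$; the two will combine to bound $|n-m|$ as desired.

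First I would observe that each vertical section $\isosection_k = \{z_k\} \times \RR$ is isometrically embedded in $\calA^{\Sol}(\gamma)$ as a bi-infinite geodesic: a vertical path has singular \Sol-length equal to its $t$-variation, and this is matched from below by the $1$-Lipschitz projection to $\tau_q$.  Let $\pi_0$ denote the coarse closest-point projection onto $\widetilde{\gamma}_0^*$; by standard hyperbolic geometry it is $(K^*,C^*)$-coarsely Lipschitz, with constants depending only on $\delta^{\Sol}$, $\SOLfiberQIconstantK$, $\SOLfiberQIconstantC$, hence only on $r$ and $g$.

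Given $p \in \isosection_n$ and $p' \in \isosection_m$ with $d_{\calA^{\Sol}(\gamma)}(p,p') \leq R$, the proof would conclude by playing two inequalities for $d_{\widetilde{\gamma}_0^*}(z_n, z_m)$ against each other.  From below, since the $\isosection_k$ meet $\widetilde{\gamma}_0^*$ in linearly ordered points $z_k$ with consecutive gaps $\|\delta_0^k\|_{q_0} \geq e^{\OurT}$, summing yields $d_{\widetilde{\gamma}_0^*}(z_n, z_m) \geq |n-m|\, e^{\OurT}$.  From above, coarse Lipschitz-ness of $\pi_0$ yields $d_{\widetilde{\gamma}_0^*}(\pi_0(p),\pi_0(p')) \leq K^* R + C^*$, and combined with the claim (discussed below) that each $\pi_0(\isosection_k)$ lies within a uniform distance $B^*$ of $z_k$ in $\widetilde{\gamma}_0^*$, one obtains $d_{\widetilde{\gamma}_0^*}(z_n, z_m) \leq K^* R + C^* + 2 B^*$.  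Dividing produces $|n-m| \leq (K^* R + C^* + 2 B^*)\, e^{-\OurT}$, and we take $\BoundOnSectionsInNeighborhood(R,r)$ to be the ceiling of this quantity.

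The main obstacle is establishing that $\pi_0(\isosection_k)$ is contained in a uniform neighborhood of $z_k$ in $\widetilde{\gamma}_0^*$, with bound depending only on $r$ and $g$.  This is a purely hyperbolic-geometric statement about the geodesic $\isosection_k$ and the quasigeodesic $\widetilde{\gamma}_0^*$, which meet transversely at $z_k$: one must show that they have disjoint pairs of endpoints in $\partial \calA^{\Sol}(\gamma) \cong S^1$, with uniform quantitative control.  The endpoints of $\widetilde{\gamma}_0^*$ are precisely the two fixed points of the fiber-preserving action of $\gamma$ on $\partial \calA^{\Sol}(\gamma)$; since $\gamma z_k \neq z_k$, the translated section $\gamma \isosection_k$ is distinct from $\isosection_k$, and a careful tracking of the $\gamma$-action on the boundary circle should rule out any shared endpoint.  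If this endpoint analysis proves delicate, an alternative route is to work inside $\calH^{\Sol}_{\tau_q}$ and use the logarithmic distance estimates from singular \Sol geometry --- two same-time points at fiber distance $D$ have ambient distance comparable to $\log D$ --- which combined with Proposition \ref{P:isometricsectionsflaring} yields an exponential-in-$R$ bound of the form $|n-m| \leq e^{O(R)}/\ourepsilon$, still of the desired shape $\BoundOnSectionsInNeighborhood(R,r)$.
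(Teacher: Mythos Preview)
Your alternative route is correct and is almost certainly what the paper intends: the statement is marked with a \qed\ and no proof, so it is meant as an immediate consequence of Proposition \ref{P:isometricsectionsflaring}.  That direct argument can be written in two lines.  If $c\co[0,L]\to\calH^\Sol_{\tau_q}$ is any path of length $L\le R$ from $(z_n,t)\in\isosection_n$ to a point of $\isosection_m$, then the $1$--Lipschitz projection to $\tau_q$ confines $c$ to fibers over $[t-R,t+R]$, so the $q_t$--speed of its $\widetilde S$--component is at most $e^R$ times its $\Sol$--speed.  Hence $d_{q_t}(z_n,z_m)\le Re^R$; on the other hand, summing Proposition \ref{P:isometricsectionsflaring} over consecutive gaps gives $d_{q_t}(z_n,z_m)\ge|n-m|\,\ourepsilon/4$, and so $|n-m|\le 4Re^R/\ourepsilon$.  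This is exactly the $e^{O(R)}/\ourepsilon$ bound you sketched.

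Your primary approach, by contrast, is more elaborate than needed and the obstacle you flag is genuine.  Controlling $\diam \pi_0(\isosection_k)$ \emph{uniformly in $\gamma$} is not automatic from the transversal intersection: in a $\delta$--hyperbolic plane, the diameter of the projection of one geodesic onto another depends on how close their ideal endpoints are, and your endpoint argument via the $\gamma$--action only shows the endpoints are distinct, not uniformly separated.  One can eventually extract uniformity from the flaring itself, but at that point you have essentially reproved the direct estimate above.  So route (b) is both the paper's route and the efficient one; I would lead with it and drop the projection argument.
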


We now promote Proposition \ref{P:isometricsectionsflaring} to a statement about arbitrary $(k,c)$--sections.

\begin{proposition} \label{P:qisectionsingsolv}
Given $r,k,c > 0$, there exists $\GapOverRelBalance > \GapOverBalance > 0$ with the following property.   
Let $q$ be a unit--norm quadratic differential defining an $r$--thick geodesic $\tau_q$ in $\Teich(S)$ and suppose that $\gamma$ in $\pi_1(S)$ is nonfilling and balanced at time zero.  
Suppose that $\{ \Sigma_n \}_{n \in \ZZ}$ are $(k,c)$--sections contained in $\calA^\Sol(\gamma) = \cup_t \widetilde \gamma_t^*$ such that
	\[ 
		\{\Sigma_n\}_{n \in \ZZ} \mbox{ is linearly ordered over } \tau_q(0) \mbox{ and }d_{\tau_q(0)}(\Sigma_n,\Sigma_{n+1}) \geq \GapOverBalance. 
	\]
Then
	\[
		 \{\Sigma_n\}_{n \in \ZZ} \mbox{ is linearly ordered over } \tau_q(t) \mbox{ and } d_{\tau_q(t)}(\Sigma_n,\Sigma_{n+1}) \geq \GapOverRelBalance e^{|t|} 
	 \]
for every $t$ in $\RR$.
\end{proposition}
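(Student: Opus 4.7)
My plan is to compare each $(k,c)$-section $\Sigma_n$ to a canonically associated isometric section and then invoke Proposition \ref{P:isometricsectionsflaring}. For each $n$, let $z_n \in \widetilde\gamma_0^* \subset \widetilde S$ be the unique point of $\Sigma_n$ over $\tau_q(0)$, and let $\isosection_n = \{(z_n,t) : t \in \RR\} \subset \calH^\Sol_{\tau_q}$ be the isometric section through $z_n$. Because $\widetilde\gamma_t^*$ is simply the $q_t$-reparameterization of $\widetilde\gamma_0^*$ as a subset of $\widetilde S$, the intersections $\isosection_n \cap \widetilde\gamma_t^* = \{z_n\}$ inherit the linear order of the $z_n$ along $\widetilde\gamma_0^*$, so the $\isosection_n$ are linearly ordered over every $\tau_q(t)$ with $d_{\tau_q(0)}(\isosection_n,\isosection_{n+1}) = d_{\tau_q(0)}(\Sigma_n,\Sigma_{n+1}) \geq \GapOverBalance$.

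First I would impose $\GapOverBalance \geq e^\OurT$ so that Proposition \ref{P:isometricsectionsflaring} applies to $\{\isosection_n\}$, yielding the exponential lower bound
\[
d_{\tau_q(t)}(\isosection_n,\isosection_{n+1}) \geq \tfrac{\ourepsilon e^{-\OurT}}{4}\,\GapOverBalance\,e^{|t|}
\]
for every $t \in \RR$. Next I would control the fiber drift $d_{\tau_q(t)}(\Sigma_n(t),\isosection_n(t))$. The $(k,c)$-Lipschitz property gives $d_{\calA^\Sol}(\Sigma_n(t),z_n) \leq k|t|+c$, and since $\isosection_n$ is isometric we also have $d_{\calA^\Sol}(z_n,\isosection_n(t)) = |t|$, so the triangle inequality in the ambient $\Sol$-metric gives $d_{\calA^\Sol}(\Sigma_n(t),\isosection_n(t)) \leq (k+1)|t|+c$. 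Since both points lie on the fiber $\widetilde\gamma_t^*$, which is a $(\SOLfiberQIconstantK,\SOLfiberQIconstantC)$-quasigeodesic in $\calA^\Sol(\gamma)$ by Corollary \ref{C:quasiconvexfiberinSOL}, this ambient bound promotes to
\[
d_{\tau_q(t)}(\Sigma_n(t),\isosection_n(t)) \leq \SOLfiberQIconstantK\bigl[(k+1)|t|+c\bigr] + \SOLfiberQIconstantK\SOLfiberQIconstantC.
\]

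Combining the two displayed estimates by the triangle inequality in $\widetilde\gamma_t^*$ produces
\[
d_{\tau_q(t)}(\Sigma_n,\Sigma_{n+1}) \geq \tfrac{\ourepsilon e^{-\OurT}}{4}\,\GapOverBalance\,e^{|t|} - 2\SOLfiberQIconstantK\bigl[(k+1)|t|+c+\SOLfiberQIconstantC\bigr].
\]
Since $e^{|t|}$ dominates any linear function of $|t|$, choosing $\GapOverBalance$ large enough (in terms of $r$, $k$, $c$, and the previously fixed constants $\ourepsilon$, $\OurT$, $\SOLfiberQIconstantK$, $\SOLfiberQIconstantC$) makes the right-hand side at least $\GapOverRelBalance\,e^{|t|}$ for all $t \in \RR$, where one may take $\GapOverRelBalance = \tfrac{1}{2}\cdot\tfrac{\ourepsilon e^{-\OurT}}{4}\,\GapOverBalance$; verifying the needed inequality is an elementary calculus check at $t=0$ together with the observation that the derivative of the exponential term eventually dominates. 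The same drift bound, after enlarging $\GapOverBalance$ once more so that the fiber drift is always smaller than half the inter-isometric-section gap, forces the $\Sigma_n$ to inherit the linear ordering of the $\isosection_n$ over every $\tau_q(t)$.

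The principal technical point is the drift bound in the fiber metric. In the ambient $\Sol$-metric, $\Sigma_n$ and $\isosection_n$ can separate linearly in $|t|$; moreover the fiber $q_t$-metric stretches one coordinate direction by $e^{|t|}$, so there is no a priori reason an $O(|t|)$ ambient distance should correspond to better than an $O(e^{|t|})$ fiber distance. The key input defusing this difficulty is Corollary \ref{C:quasiconvexfiberinSOL}: the fibers are uniform quasigeodesics in $\calA^\Sol(\gamma)$ with constants depending only on $r$ and the genus, converting the ambient linear bound into a linear fiber bound. Once this reduction is in hand, the exponential flaring of isometric sections from Proposition \ref{P:isometricsectionsflaring} easily absorbs the linear-in-$|t|$ drift, and both the lower bound and the preservation of linear ordering fall out.
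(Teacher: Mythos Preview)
Your argument has a genuine gap at the ``principal technical point.'' You invoke Corollary \ref{C:quasiconvexfiberinSOL} to promote the ambient bound $d_{\calA^\Sol}(\Sigma_n(t),\isosection_n(t)) \leq (k+1)|t|+c$ to a fiber bound $d_{\tau_q(t)}(\Sigma_n(t),\isosection_n(t)) \leq \SOLfiberQIconstantK\bigl[(k+1)|t|+c\bigr]+\SOLfiberQIconstantK\SOLfiberQIconstantC$. But Corollary \ref{C:quasiconvexfiberinSOL} asserts only that the \emph{balance-time} fiber $\widetilde\gamma_0^*$ is a uniform quasigeodesic in $\calA^\Sol(\gamma)$; it says nothing about $\widetilde\gamma_t^*$ for $t\neq 0$. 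In fact the other fibers are \emph{not} uniform quasigeodesics: by Proposition \ref{P:isometricsectionsflaring} the fiber distance $d_{\tau_q(t)}(\isosection_n(t),\isosection_{n+1}(t))$ is of order $e^{|t|}$, whereas the ambient distance between the same two points is at most $2|t| + d_{\tau_q(0)}(\isosection_n,\isosection_{n+1})$ (travel down $\isosection_n$ to time $0$, across, and back up $\isosection_{n+1}$). So the distortion of $\widetilde\gamma_t^*$ in $\calA^\Sol(\gamma)$ is genuinely exponential in $|t|$, and your linear ambient drift bound yields only an $O(e^{|t|})$ fiber drift bound --- exactly the worst case you yourself flagged. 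The subtraction $\frac{\ourepsilon e^{-\OurT}}{4}\GapOverBalance\,e^{|t|} - O(e^{|t|})$ then gives nothing, regardless of how large you take $\GapOverBalance$.

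The paper's proof circumvents this by proving something stronger and more rigid: it shows that any $(k,c)$--section $\Sigma$ in $\calA^\Sol(\gamma)$ is trapped \emph{for all time} between two isometric sections $\isosection_m$ and $\isosection_n$ with $n-m$ uniformly bounded. This is done with a thin--triangle argument inside the hyperbolic space $\calA^\Sol(\gamma)$, using the quasiconvexity of the slabs $\calA^\Sol_{[-a,a]}$ (the other half of Corollary \ref{C:quasiconvexfiberinSOL}) together with Corollary \ref{C:neighborhoodofsectionsinSOL}. Once $\Sigma_n$ is sandwiched between $\isosection_{m}$ and $\isosection_{m+B}$ for some uniform $B$, the exponential growth of the $\isosection$--gaps transfers to the $\Sigma$--gaps without any fiber--versus--ambient comparison.
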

\begin{proof}
Let $\isosection_n$ be the isometric sections as in Proposition \ref{P:isometricsectionsflaring}.
By Proposition \ref{P:isometricsectionsflaring}, it suffices to show that there is a number $B$ such that if $\Sigma$ is a $(k,c)$--section contained in $\calA^\Sol(\gamma)$, then there are numbers $n > m$ with $n-m \leq B$ such that $\Sigma$ lies in the region bounded bounded by $\isosection_m$ and $\isosection_n$.

Let $\Sigma$ be a $(k,c)$--section contained in $\calA^\Sol(\gamma)$.
Let  $n > m$ be such that $\isosection_n$ and $\isosection_m$ intersect $\Sigma$ nontrivially.

Pick $(z_m, t_m)$ in $\isosection_m \cap \Sigma$ and $(z_n,t_n)$ in $\isosection_n \cap \Sigma$.
Let $(w_n, t_m)$ be the point in $\isosection_n \cap \widetilde \gamma_{t_m}^*$.

Assume that $0 \leq t_m \leq t_n$.

Let $\calG_\Sigma \co [0,j] \to \calA^\Sol$ be a $(k,c)$--quasigeodesic in $\Sigma$ joining $(z_m, t_m)$ and $(z_n,t_n)$.
Let $\calG_\isosection$ be the geodesic in $\isosection_n$ joining $(w_n,t_m)$ and $(z_n, t_n)$, let $\calV$ be a geodesic in $\calA^\Sol(\gamma)$ joining $(z_m, t_m)$ and $(w_n, t_m)$.

By Corollary \ref{C:quasiconvexfiberinSOL}, the set $\calA^\Sol_{[-t_m, t_m]}$ is $\SOLfiberQCXconstantA$--quasiconvex.
So $\calV$ lies in $\SOLfiberQCXconstantA$--neighborhood of $\calA^\Sol_{[-t_m, t_m]}$.

As the space $\calA^\Sol(\gamma)$ is $\delta^\Sol$--hyperbolic, it follows that the quasigeodesic triangle $\triangle = \calG_\Sigma \cup \calG_\isosection \cup \calV$ is $\delta'$--thin for some $\delta'$ depending only on $\delta^\Sol$ and $k$ and $c$.

 Let $\delta'' = 3 \max\{\SOLfiberQCXconstantA, \delta'\}$.
Since $\Sigma$ is a $(k,c)$--section, there is an $i = i(k,c)$ such that 
	\[
		\calG_\Sigma \big|_{[i, j]} \subset \calA^\Sol_{[t_m 
		+ \delta'', \,\scriptinfinity]}.
	\]
Since $\triangle$ is $\delta'$--thin and $\calV$ is contained in $\calA^\Sol_{[-\scriptinfinity, \, t_m + \SOLfiberQCXconstantA]}$, the segment $\calG_\Sigma \big|_{[i, j]}$ must lie in the $\delta'$--neighborhood of $\calG_\isosection$.
So $\calG_\Sigma$ lies in the $(ki + c + \delta')$--neighborhood of $\calG_\isosection \subset \isosection_n$.

Corollary \ref{C:neighborhoodofsectionsinSOL} now bounds $n-m$.

The cases $0 \leq t_n \leq t_m$, $t_m \leq t_n \leq 0$ and $t_n \leq t_m \leq 0$ are proven by essentially the same argument.
The cases $t_n \leq 0 \leq t_m$ and $t_m \leq 0 \leq t_n$ are proven by breaking $\calG_\Sigma$ into ``positive" and ``negative" segments, and running the above argument on each half.
\end{proof}

The following theorem is due to Farb and Mosher (see Proposition 4.2 of \cite{Farb.Mosher.2002} and its proof there), and is the final piece needed to prove Theorem \ref{T:generalbundlewidth4}.

\begin{theorem}[Farb--Mosher \cite{Farb.Mosher.2002}] \label{T:hyperbolictosolv}
Given $r, k,c > 0$, there exist $\FiberwiseQIK, \FiberwiseQIC$ with the following properties.  
Suppose $g\co \mathbb R \to \Teich(S)$ is a $(k,c)$--quasigeodesic that stays a uniformly bounded distance from the $r$--thick Teichm\"uller geodesic $\tau_q$ and let $\nu\co \RR \to \RR$ be a map so that $g(t) \mapsto \tau_q(\nu(t))$ is the closest point projection.  
Then this closest point projection is $(\FiberwiseQIK,\FiberwiseQIC)$--coarsely Lipschitz and lifts to a fiber--preserving $(\FiberwiseQIK,\FiberwiseQIC)$--quasiisometry
	\[ 
		\calH_g \to \calH^\Sol_{\tau_q} 
	\]
for which the maps on fibers $\HH_{g(t)} \to (\widetilde S,q_{\nu(t)})$ are $(\FiberwiseQIK, \FiberwiseQIC)$--quasiisometries.
\qed
\end{theorem}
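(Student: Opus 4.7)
The plan is to build the fiber--preserving map $\Phi \co \calH_g \to \calH^\Sol_{\tau_q}$ by first showing the base parametrization $\nu$ is coarsely Lipschitz, then defining $\Phi$ fiberwise using the identity of $\widetilde S$ sandwiched between metric changes, and finally promoting the fiberwise control to a global quasiisometry by exploiting the local product structure of both bundles.

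First I would verify that $\nu \co \RR \to \RR$ is coarsely Lipschitz with constants depending only on $r, k, c$. Since $\tau_q$ is $r$--thick, it enjoys a Morse--type contraction property (in the sense of Rafi and Minsky), and consequently the closest--point projection onto $\tau_q$ from any $(k,c)$--quasigeodesic that stays a uniformly bounded Hausdorff distance from $\tau_q$ is uniformly coarsely Lipschitz. This gives the desired coarse Lipschitz bound on $t \mapsto \nu(t)$.

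Next, for each $t \in \RR$, I would define $\Phi$ on the fiber $\calH_{g(t)}$. Identify $\calH_{g(t)}$ with $(\widetilde S, m(g(t)))$ and the target fiber $\calH^\Sol_{\tau_q(\nu(t))}$ with $(\widetilde S, q_{\nu(t)})$. Because $d_{\Teich(S)}(g(t), \tau_q(\nu(t)))$ is uniformly bounded and $\tau_q(\nu(t))$ lies in the $r$--thick part, $g(t)$ lies in an $r'$--thick part for some uniform $r' > 0$. The identity map $(\widetilde S, m(g(t))) \to (\widetilde S, m(\tau_q(\nu(t))))$ is then a uniform bi-Lipschitz equivalence, since bounded Teichm\"uller distance controls bi-Lipschitz dilation on universal covers; composing with the uniform $(\HypFlatQIK,\HypFlatQIC)$--quasiisometry $(\widetilde S, m(\tau_q(\nu(t)))) \to (\widetilde S, q_{\nu(t)})$ from Lemma \ref{L:FMuniformqi} yields the required uniform fiberwise quasiisometry constants.

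To globalize this, I would exploit the local product structure of each bundle: the metric on $\calH_g$ locally splits as the hyperbolic fiber metric times the base metric pulled back from $\Teich(S)$ via $g$, while the singular \textsc{Sol} metric on $\calH^\Sol_{\tau_q}$ has local form $e^{2s} dx^2 + e^{-2s} dy^2 + ds^2$. Both bundles support horizontal parallel transport with uniform per--unit distortion: in the source this comes from the smooth connection on $\calS(S)$ restricted to the thick part visited by $g$, and in the target from the Teichm\"uller flow on the $r$--thick part. Decomposing paths into horizontal and fiber pieces, and combining the coarse Lipschitzness of $\nu$ with the uniform fiberwise quasiisometry, yields the upper Lipschitz bound on $\Phi$. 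The main obstacle will be the lower bound, namely the coarse expansion property: a short path in the target could a priori pull back to a long path in the source if source parallel transport distorted more than the target Teichm\"uller flow. This is precisely where thickness of $\tau_q$ and bounded Hausdorff distance between $g$ and $\tau_q$ are essential, as they force source horizontal transport to be controlled by the same Teichm\"uller dynamics that governs $\calH^\Sol_{\tau_q}$; establishing this uniform coupling is the essential technical content, and is where I would expect the bulk of the work to lie.
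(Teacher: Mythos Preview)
The paper does not prove this theorem: it is stated with a \qed\ and attributed to Farb--Mosher, specifically to Proposition~4.2 of \cite{Farb.Mosher.2002} and its proof there. So there is no ``paper's own proof'' to compare against; the result is imported as a black box.

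As an independent sketch your outline is in the right spirit, but a couple of steps are looser than you may think. The assertion that bounded Teichm\"uller distance gives a uniform \emph{bi-Lipschitz} equivalence of universal covers is not automatic: bounded Teichm\"uller distance only gives a quasiconformal map of bounded dilatation, and upgrading this to a bi-Lipschitz map with uniform constants requires thickness on \emph{both} ends together with some work (compactness of moduli plus the fact that the extremal and hyperbolic metrics are uniformly comparable in the thick part). You gesture at this, but it is not free. Second, your globalization step correctly identifies the issue --- matching the source connection's parallel transport to the Teichm\"uller flow in the target --- but this is exactly the substance of Farb--Mosher's argument, and your sketch does not indicate how you would actually carry it out beyond saying it is ``where the bulk of the work lies.'' In Farb--Mosher the key is that over a thick Teichm\"uller geodesic the singular \textsc{Sol} metric and the fiberwise hyperbolic metric are uniformly quasiisometric via a map respecting the product structure, which they prove using Minsky's bounded-geometry results; your proposal would need to either reproduce that or cite it, at which point you are back to invoking \cite{Farb.Mosher.2002}.
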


\begin{proof}[Proof of Theorem \ref{T:generalbundlewidth4}]

To simplify the discussion, we suppress many of the constants implicit in the proof, and use ``uniform" and ``uniformly" to mean that the constants involved depend only on the sequence $1 \to \pi_1(S) \to \Gamma \to G \to 1$ and its associated bundle $\HH^2 \to \calH_{\Graph{}} \to \Graph{}$.

Let $\Sigma_n $ be our $(\OurQISectionK,\OurQISectionC)$--sections of $\calH_{\Graph{}} \to \Graph{}$.

For every $x$ in $\Graph{}$, take a biinfinite geodesic $\calG_0$ in $\Graph{}$ through $x$ and $x_\gamma$.
Composing with $\Graph{} \to \Teich(S)$ we get a uniformly quasigeodesic $\calG$ fellow travelling an $r$--thick Teichm\"uller geodesic $\tau_q$ for some $r = r(\Gamma)$.
We apply Theorem \ref{T:hyperbolictosolv} to produce a uniform fiber--preserving quasiisometry $\calH_\calG \to \calH_{\tau_q}^\Sol$.  
Pushing the $\Sigma_n\big|_\calG$ over to $\calH_{\tau_q}^\Sol$ we obtain uniformly quasiisometric sections $\Sigma_n'$.  
We apply Proposition \ref{P:qisectionsingsolv}, and push the conclusion back to $\calH_\calG$.  
The result is a statement identical to that of Theorem \ref{T:generalbundlewidth4} except that $x_\gamma$ has been replaced with the pullback $x_0$ of the balance time $\tau_q(0)$.
Setting $m_t' = g(t)$ and $\tau_q(\nu(t))$ (with the appropriate reparameterization) in the discussion at the end of Section \ref{S:Balance.Times}, we have $(\HypFlatQIKgen,\HypFlatQICgen) = (\FiberwiseQIK,\FiberwiseQIC)$, so that (\ref{E:hyplengthcosh}) implies that $x_0$ is uniformly close to $x_\gamma$, and this completes the proof.
\end{proof}

{\small
\bibliographystyle{plain}
\bibliography{wide}

\bigskip

\noindent Department of Mathematics, University of Wisconsin, Madison, WI 53706
\newline \noindent  \texttt{rkent@math.wisc.edu}   

\bigskip

\noindent Department of Mathematics, University of Illinois, Urbana-Champaign, IL 61801
\newline \noindent \texttt{clein@math.uiuc.edu}
}

\end{document}